\newtheoremstyle{swappedplain}{\topsep}{\topsep}%
{\itshape}
{}
{\bfseries}
{.}
{ }
{\thmnumber{(#2)}\thmname{ #1}\thmnote{ #3}}
\newtheoremstyle{swappeddefinition}{\topsep}{\topsep}%
{}
{}
{\bfseries}
{.}
{ }
{\thmnumber{(#2)}\thmname{ #1}\thmnote{ #3}}
\theoremstyle{definition}
\newtheorem{defi}{Definition}[section]
\newtheorem{rema}[defi]{Remark}
\newtheorem{exam}[defi]{Example}
\theoremstyle{plain}
\newtheorem{prop}[defi]{Proposition}
\newtheorem{theo}[defi]{Theorem}
\newtheorem{coro}[defi]{Corollary}
\newtheorem{lemm}[defi]{Lemma}
\newtheorem*{theono}{Theorem}
\let\copyref\ref
\renewcommand*{\ref}[1]{{\rm (\copyref{#1})}}
\newcommand{\f}[1]{\it #1\rm}							
\newcommand{\llangle}{\langle\!\langle}						
\newcommand{\rrangle}{\rangle\!\rangle}						
\newcommand{\cyrrm}{\fontencoding{OT2}\selectfont\textcyrmd} 			
\newcommand{\resprod}{\setbox0 = \hbox{$\displaystyle\prod$}			
    \mathop{\vtop{\copy0\kern -1.6pt \hrule}}}					
\newcommand{\prlim}[1]{\lim\limits_{\substack{\longleftarrow\\ #1}}}		
\newcommand{\cd}{\mbox{\it cd}\ }						
\newcommand{\infl}{\mbox{\it inf}\ }						
\newcommand{\inflv}{\mbox{\it inf}\ ^\vee}					
\newcommand{\tg}{\mbox{\it tg}}							
\newcommand{\tr}{\mbox{\it tr}}							
\newcommand{\Gal}{\operatorname{Gal}}						
\newcommand{\dimfp}{\dim_{\mathbb{F}_p}}
\newcommand{\rusb}{\textup{\cyrrm{B}}}						
\newcommand{\Sha}{\textup{\cyrrm{Sh}}}						
\newcommand{\modulo}{\operatorname{mod}}
\newcommand{\xyalign}{\entrymodifiers={+!!<0pt,\fontdimen22\textfont2>}}
\newcommand{\textxymatrix}[1]{\xymatrix@C=12pt{#1}}				
\newcommand{\mf}[1]{\mathfrak{#1}}						
\newcommand{\mc}[1]{\mathcal{#1}}						
\newcommand{\gr}{\operatorname{gr}}						
\newcommand{\grg}{\mathbb{F}_p\llbracket G \rrbracket}				
\newcommand{\power}{\mathbb{F}_p\llangle X \rrangle}				
\newcommand{\polyx}{\mathbb{F}_p\langle X\rangle}				
\newcommand{\redeiinvnotilde}{[l_j, l_k, l_i]}					
\newcommand{\stackrels}[2]{\stackrel{#1}{\mbox{\tiny $#2$}}} 			
\newcommand{\ocom}[3]{[[\ol{x}_{#1},\ol{x}_{#2}],\ol{x}_{#3}]}			
\newcommand{\Q}{\mathbb{Q}}							
\newcommand{\N}{\mathbb{N}}							
\newcommand{\Z}{\mathbb{Z}}							
\newcommand{\F}{\mathbb{F}}							
\newcommand{\R}{\mathbb{R}}							
\newcommand{\C}{\mathbb{C}}							
\newcommand{\ol}[1]{\overline{#1}}						
\newcommand{\CC}{\mc{C}}							
\newcommand{\z}{\mf{z}}								
\newcommand{\lbracket}{[}							
\newcommand{\rbracket}{]}							
\long\def\symbolfootnote[#1]#2{\begingroup%
\def\thefootnote{\fnsymbol{footnote}}\footnote[#1]{#2}\endgroup}
	\renewcommand{\thefootnote}{}
\begin{document}

\pagestyle{fancy}
\markleft{\it Gärtner, Rédei symbols and arithmetical mild pro-$2$-groups}
\markright{\it Gärtner, Rédei symbols and arithmetical mild pro-$2$-groups}

\begin{abstract}
Generalizing results of Morishita and Vogel, an explicit description of the triple Massey product for the Galois group $G_S(2)$ of the maximal $2$-extension of $\Q$ unramified outside a finite set of prime numbers $S$ containing 2 is given in terms of Rédei symbols. We show that mild pro-$2$-groups with Zassenhaus invariant $3$ occur as Galois groups of the form $G_S(2)$. Furthermore, a non-analytic mild fab pro-$2$-group having only $3$ generators is constructed .
\end{abstract}

\footnotetext{\it 2010 Mathematics Subject Classification. \rm 11R34, 12G10, 20E18, 20F05, 55S30.\\ \it key words and phrases. \rm Mild pro-$p$-groups, Massey products, restricted ramification, fab pro-$p$-groups.}

\title{Rédei symbols and arithmetical mild pro-$2$-groups}
\author{Jochen Gärtner\\
\today}
\maketitle

\thispagestyle{empty}

\section{Introduction}

Galois extensions with ramification restricted to finite sets of primes arise naturally in algebraic number theory and arithmetic geometry, e.g.\ in terms of representations coming from the Galois action on the (étale) cohomology of algebraic varieties defined over number fields. If $k$ is a number field, $p$ a prime number and $S$ is a finite set of primes of $k$, we denote by $k_S(p)$ the maximal pro-$p$-extension of $k$ unramified outside $S$ and by $G_S(p) = \Gal(k_S(p)|k)$ its Galois group. If $S$ contains the archimedian primes and the set $S_p$ of primes of $k$ lying above $p$, the group $G_S(p)$ is fairly well understood. In particular it has been known that it is of cohomological dimension $\cd G_S(p) \le 2$ (assuming $k$ is totally imaginary if $p=2$) and often a duality group. In the \f{tame case} however, i.e.\ if $S\cap S_p=\varnothing$, the structure of these groups has remained rather mysterious for a long time. Some of the few known results are still conjectural such as the \f{Fontaine-Mazur conjecture} which predicts that these groups are either finite or not $p$-adic analytic.
\vspace{10pt}

In his fundamental paper \cite{JL}, J.\ Labute could give the first examples of Galois groups of the form $G_S(p)$ over $\Q$ in the tame case where $\cd G_S(p)=2$ using the theory of \f{mild pro-$p$-groups}. This has been the starting point for more far-reaching studies. We'd like to mention the following remarkable result due to A.\ Schmidt:

\begin{theono}[Schmidt]
 Let $p$ be an odd prime number, $k$ a number field and $S$ a finite set of primes of $k$. Let $\mc{M}$ be an arbitrary set of primes of $k$ with Dirichlet density $\delta(M)=0$. Then there exists a finite set $S_0$ disjoint from $S\cup \mc{M}$ such that $\cd G_{S\cup S_0}(p)=2$.
\end{theono}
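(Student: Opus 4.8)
The plan is to produce a finite set $S_0$, disjoint from $S\cup\mc{M}$, such that $G_{S\cup S_0}(p)$ is a \emph{mild} pro-$p$-group in the sense of Labute; since a mild pro-$p$-group has cohomological dimension $2$ by \cite{JL}, this yields $\cd G_{S\cup S_0}(p)=2$. Write $T=S\cup S_0$ and fix a minimal presentation $1\lra R\lra F\lra G_T(p)\lra 1$, with $F$ free pro-$p$ on $d=\dimfp H^1(G_T(p),\F_p)$ generators and $R$ normally generated by $r=\dimfp H^2(G_T(p),\F_p)$ relations $\rho_1,\dots,\rho_r$. Mildness is a property of the initial forms of the $\rho_i$ in the graded algebra $\gr\grf$: by definition it holds precisely when these initial forms form a \emph{strongly free} sequence, and --- once the relations have been arranged to be ``local'' with quadratic initial forms --- this becomes a purely combinatorial condition on the linking diagram of $S_0$. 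The problem thus reduces to choosing $S_0$ so that this diagram is of mild type.

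First I would recall, via the global Euler characteristic formula and Poitou--Tate duality, how $d$, $r$ and the cup product $H^1\times H^1\to H^2$ computing the quadratic parts of the $\rho_i$ are assembled from the local data at the primes of $T$ together with the $p$-parts of the $S$-class group and the $S$-unit group. The admissible new primes $\mf{q}$ are tame ones with $N\mf{q}\equiv 1\pmod{p}$; such a prime contributes one generator $x_\mf{q}$ and, up to conjugacy, the relation $\sigma_\mf{q}\tau_\mf{q}\sigma_\mf{q}^{-1}=\tau_\mf{q}^{N\mf{q}}$ of its tame decomposition group, whose initial form in $\gr\grf$ is the commutator $[\sigma_\mf{q},\tau_\mf{q}]$. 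Here it is essential that $p$ is odd: for $p=2$ a square term of the same degree survives in this initial form, which is exactly where Rédei symbols take over from $p$-th power residue symbols. All of these data are controlled by Frobenius conditions in suitable ray class fields and Kummer extensions, so by the Chebotarev density theorem each such condition is met by a set of primes of positive Dirichlet density; consequently, at every stage of the construction there remain admissible primes outside the density-zero set $S\cup\mc{M}$, and this is how $\mc{M}$ gets avoided.

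The set $S_0$ is then built in two finite stages. A first stage $S_1$ is chosen so as to neutralise the contribution of $S$ and of the $p$-class and $S$-unit groups: adjoining enough primes as above kills the relevant Shafarevich group, so that over $S\cup S_1$ every relation of a minimal presentation is, up to conjugacy, a local relation of the above type and hence has quadratic initial form. A second stage adjoins further primes $\mf{q}_1,\dots,\mf{q}_m$ whose mutual $p$-th power residue symbols $\left(\frac{\mf{q}_i}{\mf{q}_j}\right)_p$ are prescribed --- again by Chebotarev --- so that the resulting directed linking diagram is of ``mild type'', for instance a single circuit through all the $\mf{q}_i$, or any configuration covered by Labute's graph-theoretic mildness criterion. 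For $T=S\cup S_0$ so obtained, the initial forms $\operatorname{in}(\rho_1),\dots,\operatorname{in}(\rho_r)$ are precisely the quadratic elements read off this diagram; Labute's criterion (via Anick's theorem on strongly free sequences) then shows that they form a strongly free sequence, so $G_T(p)$ is mild and $\cd G_T(p)=2$.

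I expect the main obstacle to be the interlocking of the cohomological bookkeeping with this combinatorial requirement: one must keep the ledger of $H^1$, $H^2$, the $p$-class group, the $S$-units and the local terms consistent with the demand on the linking diagram, so that a \emph{single} set $S_0$ at once turns every relation into a local one with quadratic initial form, makes those initial forms a strongly free sequence, and --- at each intermediate Chebotarev step --- still leaves a positive density of usable primes, so that $S\cup\mc{M}$ can be sidestepped. Once the presentation has been brought into this shape, the remaining task of deducing strong freeness from the combinatorics of the diagram is the technical heart, and is exactly what Labute's criterion is built to supply.
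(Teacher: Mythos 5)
First, note that the paper does not actually prove this statement: it is quoted as Schmidt's theorem and the text immediately refers the reader to \cite{AS2}, Th.\ 1.1, so there is no internal proof to compare against. Judged on its own, your overall strategy is the right one and is indeed Schmidt's: enlarge $S$ by finitely many tame primes $\mf{q}$ with $N\mf{q}\equiv 1\bmod p$, each contributing one generator and one relation whose initial form is a commutator (here your remark that oddness of $p$ kills the power term, in contrast to the $p=2$ situation treated in this paper via Rédei symbols, is exactly right), verify mildness, and conclude $\cd G_{S\cup S_0}(p)=2$; since every condition imposed on a new prime is a Frobenius condition in a suitable governing field, Chebotarev supplies a positive-density set of admissible primes at each stage, which is how the density-zero set $S\cup\mc{M}$ is avoided.

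The genuine gap is in the mildness criterion you invoke. You propose to prescribe all mutual $p$-th power residue symbols of the new primes so that the linking diagram is ``of mild type, for instance a single circuit'', i.e.\ Labute's graph-theoretic criterion. That is Labute's argument over $\Q$; over an arbitrary number field $k$, with nontrivial class group and units and with an arbitrary initial set $S$ (whose primes need not contribute tame commutator relations at all), one cannot freely prescribe the entire linking diagram --- reciprocity and the global terms in the Euler--Poincaré/Poitou--Tate bookkeeping constrain it --- and this is precisely the difficulty Schmidt's proof is built to circumvent. What Schmidt actually uses, and what this paper records in its introduction and generalizes in Theorem \ref{cohomologicalcrit}, is the cup-product criterion: a decomposition $H^1(G,\F_p)=U\oplus V$ such that $\cup$ maps $U\otimes V$ onto $H^2(G,\F_p)$ and vanishes identically on $V\otimes V$. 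This only requires controlling the cup product of each newly adjoined prime against a fixed complement, one prime at a time, and so reduces to iterated Chebotarev choices rather than to the global combinatorial problem your final paragraph correctly identifies as the ``main obstacle'' but then leaves open. As written, the proposal defers exactly the step where the proof lives, and the criterion it names is not strong enough to carry that step in the stated generality.
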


For the precise (and even stronger) statement see \cite{AS2}, Th.1.1. Note that in the above theorem the set $S_0$ can always be chosen to be disjoint from $S_p$. 
\vspace{10pt}

As an important ingredient in the proof of the above theorem, using results of J.\ Labute, A.\ Schmidt showed that a finitely presented pro-$p$-group $G$ is mild (and hence of cohomological dimension $\cd G=2$) if the cohomology group $H^1(G,\F_p)$ admits a direct sum decomposition $H^1(G,\F_p)=U\oplus V$ such that the cup-product $\cup: H^1(G,\F_p)\otimes H^1(G,\F_p)\to H^2(G,\F_p)$ maps $U\otimes V$ surjectively onto $H^2(G,\F_p)$ and is identically zero on $V\otimes V$. Note that this cup-product being surjective is equivalent to $G$ having a minimal system of defining relations which are linearly independent modulo the third step of the \f{Zassenhaus filtration}. In \cite{JGMasseyMild}, the author generalizes Schmidt's cup-product criterion to finitely presented pro-$p$-groups with arbitrary \f{Zassenhaus invariant} using higher \f{Massey products}. However, it does not follow from Schmidt's result that mild pro-$p$-groups with Zassenhaus invariant $>2$ occur as Galois groups of the form $G_S(p)$. The goal of this paper is to give an affirmative answer to this question. More precisely, we will prove the following result in the case $k=\Q, p=2$:

\begin{theono}[Th.\ref{wildexample}] 
 Let $S=\{2,l_1,\ldots, l_n\}$ for some $n\ge 1$ and prime numbers $l_i\equiv 9\mod 16,\ i=1,\ldots, n$, such that the Legendre symbols satisfy
\[
 \left(\frac{l_i}{l_j}\right)_2=1,\ 1\le i,j\le n,\ i\neq j.
\]
Then $G_S(2)=\Gal(\Q_S(2)|\Q)$ is a mild pro-$2$-group with generator rank $n+1$, relation rank $n$ and Zassenhaus invariant $\z(G)=3$.
\end{theono}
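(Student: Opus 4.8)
The strategy is to combine the arithmetic description of $G=G_S(2)$ with the Massey‑product generalization of Schmidt's cup‑product criterion from \cite{JGMasseyMild}. First I would compute the two ranks. By Kummer theory $H^1(G,\F_2)=\Hom(G_S(2),\F_2)$ is the group of quadratic extensions of $\Q$ unramified outside $S$ and at the infinite place, i.e.\ the subgroup $\langle 2,l_1,\ldots,l_n\rangle$ of $\Q^\times/(\Q^\times)^2$, of $\F_2$‑dimension $n+1$; write $\chi_0,\chi_1,\ldots,\chi_n$ for the basis dual to $\Q(\sqrt2),\Q(\sqrt{l_1}),\ldots,\Q(\sqrt{l_n})$. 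Every nontrivial class in $\langle 2,l_1,\ldots,l_n\rangle/(\Q^\times)^2$ has odd valuation at some prime of $S$, hence is not a local square there, so $\Sha^1(G_S,\mu_2)=0$; therefore $\Sha^2(G_S,\F_2)=0$ and the relation rank $\dim H^2(G,\F_2)$ equals $|S|-1=n$ by the standard computation of the cohomology of $G_S(2)$ (Poitou--Tate duality). In particular the cup product on $H^1(G,\F_2)$ is detected by the local Hilbert symbols $(a,b)_v$ with $a,b\in\{2,l_1,\ldots,l_n\}$ and $v\in S$, and a short local computation shows these all vanish: $l_i\equiv1\bmod8$ makes $l_i$ a square in $\Q_2^\times$ and gives $\bigl(\tfrac2{l_i}\bigr)=1$; $\bigl(\tfrac{l_i}{l_j}\bigr)=\bigl(\tfrac{l_j}{l_i}\bigr)=1$ for $i\neq j$ by hypothesis and quadratic reciprocity; $\bigl(\tfrac{-1}{l_i}\bigr)=1$ as $l_i\equiv1\bmod4$; and $(2,2)_2=1$. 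Hence the cup product vanishes identically, so in a minimal presentation $1\to R\to F\to G\to1$ one has $R\subseteq F_{(3)}$ and $\z(G)\ge3$.

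To see that $\z(G)=3$ and to identify the relations, I would use Koch's presentation: with $F$ free pro‑$2$ on $x_0,x_1,\ldots,x_n$ ($x_i$ a generator of inertia at $l_i$), a minimal system of defining relations is given by the relations $r_i=x_i^{\,l_i-1}[x_i,y_i]$ attached to the primes $l_i$, where $y_i$ is a lift of a Frobenius $\sigma_i$ at $l_i$. Triviality of the Legendre symbols above forces $y_i\in F_{(2)}$; moreover, since $l_i\equiv9\bmod16$, the prime $l_i$ splits in $\Q(\sqrt2)$ but has Frobenius of order $2$ in the cyclic quartic field $\Q(\zeta_{16})^+$, so $\sigma_i$ is nontrivial modulo $G_{(3)}$ and its class $Y_i\in\gr_2F$ has coefficient $1$ on the restricted power $X_0^{[2]}$. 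Since $v_2(l_i-1)=3$ gives $x_i^{\,l_i-1}\in F_{(8)}$, the initial form of $r_i$ in $\gr_3F$ is $\rho_i=[X_i,Y_i]$, which is nonzero because $[X_i,\,\cdot\,]$ is injective on $\gr F$; and since the coefficient of the monomial $X_iX_0X_0$ in $\rho_j$ equals $\delta_{ij}$, the $\rho_i$ are linearly independent in $\gr_3F$. Therefore $R\not\subseteq F_{(4)}$, i.e.\ $\z(G)=3$.

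For the mildness I would apply the criterion of \cite{JGMasseyMild} to the decomposition $H^1(G,\F_2)=U\oplus V$ with $U=\langle\chi_1,\ldots,\chi_n\rangle$ and $V=\langle\chi_0\rangle$. Since all cup products vanish, the triple Massey product is a well‑defined trilinear map $H^1(G,\F_2)^{\otimes3}\to H^2(G,\F_2)$, and by the description of triple Massey products for $G_S(2)$ in terms of Rédei symbols established earlier in the paper, together with the computation of the $\rho_i$ above: $\langle\chi_0,\chi_0,\chi_0\rangle=0$ (no relation has an $X_0^3$‑component), while the classes $\langle\chi_0,\chi_0,\chi_i\rangle$, $i=1,\ldots,n$, form a basis of $H^2(G,\F_2)$ (they pair with $\rho_1,\ldots,\rho_n$ as the identity matrix). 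These are exactly the vanishing and surjectivity conditions required by the criterion, so $G$ is mild; in particular $\cd G=2$ and $\gr G$ has Poincaré series $1/(1-(n+1)t+nt^3)$, which completes the proof.

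The main obstacle is precisely this last step. The triple Massey products $\langle\chi_i,\chi_j,\chi_k\rangle$ with $i,j,k\ge1$ are genuine Rédei symbols $[l_i,l_j,l_k]$ whose values are not controlled by the hypotheses on the $l_i$, so the decomposition $H^1=U\oplus V$ must be chosen — with $V$ the one‑dimensional "$2$‑direction" $\langle\chi_0\rangle$ — so that the only Massey products the criterion actually constrains are $\langle\chi_0,\chi_0,\chi_0\rangle$ and the $\langle\chi_0,\chi_0,\chi_i\rangle$, which are forced by $l_i\equiv9\bmod16$ and the shape of the relations, while all the uncontrolled Rédei symbols are absorbed into the "$U$‑heavy" directions left free by the criterion. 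Carrying this out rigorously requires the precise form of the $\z=3$ criterion of \cite{JGMasseyMild} and a careful analysis of the degree‑$3$ initial forms $\rho_i$.
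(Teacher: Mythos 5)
Your proposal is correct and follows essentially the same route as the paper: Koch's presentation and the vanishing of the cup-product give $\z(G_S(2))\ge 3$, the condition $l_i\equiv 9\bmod 16$ (i.e.\ Frobenius of order $2$ in $\Q(\zeta_{16})^+$, which is exactly the statement $[l_0,l_0,l_i]=-1$) forces $\tr_{r_m}\langle\chi_k,\chi_0,\chi_0\rangle_3=\delta_{mk}$, and the criterion of \cite{JGMasseyMild} is applied with the same decomposition $U=\langle\chi_1,\ldots,\chi_n\rangle$, $V=\langle\chi_0\rangle$, $e=1$. The only cosmetic difference is that you phrase the key degree-$3$ computation directly in terms of Magnus expansions and initial forms, whereas the paper routes it through the Rédei-symbol description of Theorem \ref{gsmassey} — whose proof in the relevant $(0,0)$ case is precisely the Magnus-expansion argument you give.
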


We will show that the triple Massey product for the group $G_S(2)$ is amenable to an explicit description via \f{Rédei symbols}. Furthermore, we consider certain \f{fab} quotients of the groups $G_S(2)$ and give an explicit description of their triple Massey products, cf.\ Th.\ref{GSTpres}. This will enable us to construct an explicit example of a mild, non-analytic fab pro-$2$-group with Zassenhaus invariant $3$.
\vspace{10pt}

\bfseries Acknowledgements: \mdseries Parts of the results in this article are contained in the author's PhD thesis. The author likes to thank Kay Wingberg for his guidance and great support and to Alexander Schmidt and Denis Vogel for helpful suggestions. Further thanks go to Hugo Chapdelaine and Claude Levesque for providing a relevant reference concerning the $2$-class group of a quadratic number field.

\section{Review of Massey products in the cohomology of pro-$p$-groups}

In this section we recall the definition and properties of higher Massey products for group cohomology of pro-$p$-groups. We will not give any proofs and refer the reader to the existing literature (e.g.\ see \cite{DK} and \cite{DV2}).

Let $p$ be a prime number. With a view towards our applications, for a pro-$p$-group $G$ we consider the trivial $G$-module $\F_p$ only and set
\[
 H^i(G)=H^i(G,\F_p),\ h^i=\dimfp H^i(G).
\]

By $\CC^*(G)=\CC^*(G,\F_p)$ we denote the standard inhomogeneous cochain complex (e.g.\ see \cite{NSW}, Ch.I, §2).

\begin{defi}
 Let $n\ge 2$ and $\alpha_1,\ldots, \alpha_n\in H^1(G)$. We say that the \f{$n$-th Massey product $\langle \alpha_1,\ldots, \alpha_n\rangle_n$} is \f{defined} if there is a collection
\[
 \mc{A}=\{a_{ij}\in \CC^1(G)\ |\ 1\le i,j\le n,\ (i,j)\neq (1,n)\}
\]
(called a \f{defining system} for $\langle \alpha_1,\ldots, \alpha_n\rangle_n$), such that the following conditions hold:
\begin{itemize}
 \item[\rm (i)] $a_{ii}$ is a representative of the cohomology class $\alpha_i,\ 1\le i\le n$.
 \item[\rm (ii)] For $1\le i < j\le n,\ (i,j)\neq (1,n)$ it holds that
 \[
  \partial^2 (a_{ij}) = \sum_{l=i}^{j-1} a_{il}\cup a_{(l+1)j}
 \]
\end{itemize}
where $\partial^2$ denotes the coboundary operator $\partial^2: \CC^1(G)\longrightarrow \CC^2(G)$.
If $\mc{A}$ is a defining system for $\langle \alpha_1,\ldots, \alpha_n\rangle_n$, the $2$-cochain
\[
 b_\mc{A} = \sum_{l=1}^{n-1} a_{1l}\cup a_{(l+1)n}
\]
is a cocyle and we denote its class in $H^2(G)$ by $\langle \alpha_1,\ldots, \alpha_n\rangle_\mc{A}$. We set
\[
 \langle \alpha_1,\ldots, \alpha_n\rangle_n = \bigcup_{\mc{A}} \langle \alpha_1,\ldots, \alpha_n\rangle_\mc{A}
\]
where $\mc{A}$ runs over all defining systems. The Massey product $\langle \alpha_1,\ldots, \alpha_n\rangle_n$ is called \f{uniquely defined} if $\#\langle \alpha_1,\ldots, \alpha_n\rangle_n=1$. We say that the $n$-th Massey product is \f{uniquely defined for $G$} if $\langle \alpha_1,\ldots, \alpha_n\rangle_n$ is uniquely defined for all $\alpha_1,\ldots, \alpha_n\in H^1(G)$.
\end{defi}

The $2$-fold Massey product is uniquely defined given by the cup-product $\cup: H^1(G)\times H^1(G)\to H^2(G)$ . If this cup-product is identically zero, the triple Massey product is uniquely defined. More generally, if $n\ge 2$ and the $k$-th Massey product is uniquely defined and identically zero for all $k<n$, the $n$-th Massey product is also uniquely defined and yields a multilinear map of $\F_p$-vector spaces
\[
\langle \cdot, \ldots, \cdot \rangle_n: H^1(G)^n\longrightarrow H^2(G).
\]

\begin{rema}
 The $n$-th Massey products commute with inflation, restriction and corestriction homomorphisms provided they are uniquely defined. In fact, this follows directly from their definition and the functoriality properties of the cup-product on the level of cochains. Furthermore, generalizing the anti-com\-mutativity of the cup-product, Massey products satisfy certain \f{shuffle identities}, cf.\ \cite{JGMasseyMild}, Prop.\ 4.8. 
\end{rema}

Now assume that the pro-$p$-group $G$ is finitely generated, i.e.\ $h^1(G) < \infty$. It has been remarked by H.\ Koch that higher Massey products are closely related to structure of relation systems of $G$. In order to make this connection precise, we need the definition of the \f{Zassenhaus filtration}. Let $\Omega_G$ denote the complete group algebra 
\[
\Omega_G=\grg=\prlim{U} \F_p[G/U]
\]
where $U$ runs through the open normal subgroups of $G$. By $I_G\subseteq\Omega_G$ we denote the \f{augmentation ideal} of $G$, i.e.\ the kernel of the canonical \f{augmentation map}
\[
 \begin{tikzpicture}[description/.style={fill=white,inner sep=2pt}, bij/.style={above,sloped,inner sep=1.5pt}, column 1/.style={anchor=base east},column 2/.style={anchor=base west}]
 \matrix (m) [matrix of math nodes, row sep=0em,
 column sep=2.5em, text height=1.5ex, text depth=0.25ex]
 {\Omega_G & \F_p,\\
  g & 1,& g\in G.\\
};
 \path[->>,font=\scriptsize]
 (m-1-1) edge node[auto] {} (m-1-2);
 \path[|->,font=\scriptsize]
 (m-2-1) edge node[auto] {} (m-2-2);
 \end{tikzpicture}
\]

The \f{Zassenhaus filtration} $(G_{(n)})_{n\in\N}$ of $G$ is given by
\[
 G_{(n)}=\{g\in G\ |\ g-1\in I_G^n\}.
\]
The groups $G_{(n)}, n\in\N$ form a system of neighborhoods $1\in G$ consisting of open normal subgroups. 

Let
\[
\begin{tikzpicture}[description/.style={fill=white,inner sep=2pt}, bij/.style={below,sloped,inner sep=1.5pt}]
\matrix (m) [matrix of math nodes, row sep=1.5em,
column sep=2.5em, text height=1.5ex, text depth=0.25ex]
{  1 & R & F & G & 1\\};
\path[->,font=\scriptsize]
(m-1-1) edge node[auto] {} (m-1-2)
(m-1-2) edge node[auto] {} (m-1-3)
(m-1-3) edge node[auto] {} (m-1-4)
(m-1-4) edge node[auto] {} (m-1-5);
\end{tikzpicture}
\]
be a minimal presentation of $G$, i.e.\ $F$ is a free pro-$p$-group on generators $x_1, \ldots, x_d,\ d=h^1(G)$. By $\chi_i=x_i^*$ we denote the basis of the $\F_p$-vector space $H^1(F)=H^1(G)$ dual to the $x_i$. By the Hochschild-Serre spectral sequence, we have the transgression isomorphism
\[
 \begin{tikzpicture}[description/.style={fill=white,inner sep=2pt}, bij/.style={above,sloped,inner sep=1.5pt}, column 1/.style={anchor=base east},column 2/.style={anchor=base west}]
 \matrix (m) [matrix of math nodes, row sep=1.5em,
 column sep=2.5em, text height=1.5ex, text depth=0.25ex]
 {\tg:\ H^1(R)^G  & H^2(G).\\
};
 \path[->,font=\scriptsize]
 (m-1-1) edge node[bij] {$\sim$} (m-1-2);
 \end{tikzpicture}
\]
Hence, every element $r \in R$ gives rise to a \f{trace map}
\[
 \begin{tikzpicture}[description/.style={fill=white,inner sep=2pt}, bij/.style={above,sloped,inner sep=1.5pt}, column 1/.style={anchor=base east},column 2/.style={anchor=base west}]
 \matrix (m) [matrix of math nodes, row sep=0em,
 column sep=2.5em, text height=1.5ex, text depth=0.25ex]
 {\tr_r:\ H^2(G) & \mathbb{F}_p,\\
  \varphi& (\tg^{-1}\varphi)(r).\\
};
 \path[->,font=\scriptsize]
 (m-1-1) edge node[auto] {} (m-1-2);
 \path[|->,font=\scriptsize]
 (m-2-1) edge node[auto] {} (m-2-2);
 \end{tikzpicture}
\]
If $r_i\in R, i\in I$ is a minimal system of defining relations for $G$ (i.e.\ a minimal system of generators of $R$ as closed normal subgroup of $F$), then $\{\tr_{r_i}, i\in I\}$ is a basis of the dual space $H^2(G)^\vee$. Furthermore, note that for the free pro-$p$-group $F$ we have a topological isomorphism
\[
 \begin{tikzpicture}[description/.style={fill=white,inner sep=2pt}, bij/.style={above,sloped,inner sep=1.5pt}, column 1/.style={anchor=base east},column 2/.style={anchor=base west}]
 \matrix (m) [matrix of math nodes, row sep=0em,
 column sep=2.5em, text height=1.5ex, text depth=0.25ex]
 { \Omega_F  & \power,\ x_i &1+X_i\\
};
 \path[->,font=\scriptsize]
 (m-1-1) edge node[bij] {$\sim$} (m-1-2);
 \path[|->,font=\scriptsize]
 (m-1-2) edge node[auto] {} (m-1-3);
 \end{tikzpicture}
\]
where $\power$ denotes the $\F_p$-algebra of formal power series in the non-com\-muting indeterminates $X=\{X_1,\ldots, X_d\}$. Let $\psi: F\hookrightarrow \power$ denote the composite of the map $F\hookrightarrow\Omega_F, f\mapsto f-1$ with the above isomorphism, mapping $F$ into the augmentation ideal of $\power$ and the generator $x_i$ to $X_i$.

\begin{defi}
 \label{Magnusdefi}
The element $\psi(f)$ is called \f{Magnus expansion} of $f\in F$. For any multi-index $I=(i_1,\ldots, i_k),\ 1\le i_j\le d$ of height $d$ we set $X_I = X_{i_1}\cdots X_{i_k}$ and define the numbers $\varepsilon_{I,p}(f)$ by
\[
 \psi(f)=\sum_I \varepsilon_{I,p}(f) X_I
\]
where $I$ runs over all multi-indices of height $d$. 
\end{defi}

By definition $f\in F_{(n)}$ holds if and only if $\varepsilon_{I,p}(f)=0$ for all multi-indices $I=(i_1,\ldots, i_k)$ of length $k<n$. The following result proven independently by M.\ Morishita (\cite{MM2}, Th.2.2.2) and D.\ Vogel (\cite{DV2}, Prop.1.2.6) generalizes a well-known connection between the cup-product and relations modulo $F_{(3)}$ to higher degrees:

\begin{theo}
\label{epsilonmaps}
Let $G$ be a finitely presented pro-$p$-group and
\[
\begin{tikzpicture}[description/.style={fill=white,inner sep=2pt}, bij/.style={below,sloped,inner sep=1.5pt}]
\matrix (m) [matrix of math nodes, row sep=1.5em,
column sep=2.5em, text height=1.5ex, text depth=0.25ex]
{  1 & R & F & G & 1\\};
\path[->,font=\scriptsize]
(m-1-1) edge node[auto] {} (m-1-2)
(m-1-2) edge node[auto] {} (m-1-3)
(m-1-3) edge node[auto] {} (m-1-4)
(m-1-4) edge node[auto] {} (m-1-5);
\end{tikzpicture}
\]
be a minimal presentation. Assume that $R\subseteq F_{(n)}$ for some $n\ge 2$. Then for all $2\le k\le n$ the $k$-fold Massey product
\[
 \langle \cdot, \ldots, \cdot\rangle_k: H^1(G)^k\longrightarrow H^2(G)
\]
is uniquely defined. Furthermore, for all multi-indices $I$ of height $d$ and length $2\le |I|\le n$ and for all $r\in R$ we have the equality
\[
 \varepsilon_{I,p}(r) = (-1)^{|I|-1} \tr_r \langle \chi_I\rangle_{|I|}
\]
where for $I=(i_1, \ldots, i_k)$ we have set $\chi_I = (\chi_{i_1},\ldots, \chi_{i_k})\in H^1(G)^k$.  In particular, for $1 < k < n$ the $k$-fold Massey product on $H^1(G)$ is identically zero.
\end{theo}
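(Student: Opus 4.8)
The plan is to argue by induction on $k=2,\ldots,n$, establishing at each stage that $\langle\cdot,\ldots,\cdot\rangle_k$ is uniquely defined, that the displayed identity holds for all multi-indices of length $k$, and --- when $k<n$ --- that the $k$-fold Massey product vanishes identically on $H^1(G)$. For $k=2$ the cup product is always uniquely defined, and the identity $\varepsilon_{(i,j),p}(r)=-\tr_r(\chi_i\cup\chi_j)$ is the classical Fox-calculus description of cup products of degree-one classes (cf.\ \cite{NSW}); if $n>2$ then $R\subseteq F_{(3)}$ makes all of these coefficients vanish, so (since $\{\tr_{r_i}\}$ is a basis of $H^2(G)^\vee$) the cup product is identically zero. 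In the inductive step I assume $\langle\cdot\rangle_j$ is uniquely defined and identically zero for $2\le j\le k-1$ --- legitimate, as $k-1<n$ --- whereupon the general principle recalled just after the definition of Massey products gives unique definedness of $\langle\cdot\rangle_k$; it then remains to produce one defining system for each $\langle\chi_I\rangle_k$ with $|I|=k$ and to compute its class.

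The construction I would use is the unipotent \emph{Magnus matrix representation} attached to $I=(i_1,\ldots,i_k)$. Let $U_{k+1}$ be the group of upper unitriangular $(k+1)\times(k+1)$ matrices over $\F_p$, with matrix units $e_{a,b}$, and let $\rho_I\colon\power\to M_{k+1}(\F_p)$ be the $\F_p$-algebra homomorphism with $X_j\mapsto\sum_{1\le a\le k,\ i_a=j}e_{a,a+1}$; it is continuous, since it annihilates every monomial of degree $>k$ and hence factors through a finite-dimensional quotient of $\power$. A direct check shows that $\rho_I$ sends a monomial $X_{j_1}\cdots X_{j_m}$ to the sum of the $e_{a,a+m}$ over all windows $(i_a,\ldots,i_{a+m-1})=(j_1,\ldots,j_m)$; in particular $\rho_I(X_I)=e_{1,k+1}$. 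Composing $\rho_I$ with the multiplicative embedding of $F$ into the unit group of $\power$ given by $f\mapsto 1+\psi(f)$ yields a continuous homomorphism $\hat\rho_I\colon F\to U_{k+1}(\F_p)$ with $\hat\rho_I(f)$ equal to the identity matrix plus $\rho_I(\psi(f))$, and whose $(a,a+1)$-entry represents $\chi_{i_a}$. Since $R\subseteq F_{(n)}$ and $k\le n$, for $r\in R$ the series $\psi(r)$ lies in the $n$-th power of the augmentation ideal of $\power$, so $\rho_I(\psi(r))$ vanishes apart from the coefficient of $X_I$ in the borderline case $k=n$; hence $\hat\rho_I(r)$ equals the identity matrix plus $\varepsilon_{I,p}(r)\,e_{1,k+1}$, i.e.\ $\hat\rho_I$ carries $R$ into the center $Z\cong\F_p$ of $U_{k+1}$ and descends to a homomorphism $G\to U_{k+1}/Z$. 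By the standard dictionary between unipotent representations and Massey products (cf.\ \cite{DK}), the entries of this homomorphism in positions $(a,b)$ with $a<b$ and $(a,b)\ne(1,k+1)$ --- after rescaling each entry-cochain by an appropriate sign $\pm1$, so that condition (ii) in the definition of a defining system holds verbatim --- constitute a defining system $\mc{A}$ for $\langle\chi_I\rangle_k$.

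To evaluate $\langle\chi_I\rangle_\mc{A}$, I would inflate the cocycle $b_\mc{A}$ to $F$, where $\hat\rho_I$ is a genuine homomorphism into $U_{k+1}$: expanding the $(1,k+1)$-entry of the matrix identity $\hat\rho_I(fg)=\hat\rho_I(f)\hat\rho_I(g)$ exhibits $\infl(b_\mc{A})$ as $\pm\,\partial$ of the top-corner entry-cochain of $\hat\rho_I$. Feeding this primitive on $F$ into the standard description of the transgression --- for $\eta\in H^2(G)$ with $\infl(\eta)=\partial\beta$ one has $\tg^{-1}(\eta)=\beta|_R$ up to a universal sign --- identifies $\tr_r\langle\chi_I\rangle_\mc{A}$ with $\pm\,[\hat\rho_I(r)]_{1,k+1}=\pm\,\varepsilon_{I,p}(r)$; tracking the signs arising from the matrix multiplication, from the rescaling of the cochains, and from the conventions for $\partial$, $\cup$ and $\tg$ produces exactly the factor $(-1)^{|I|-1}$. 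As $\langle\cdot\rangle_k$ is uniquely defined, $\langle\chi_I\rangle_\mc{A}=\langle\chi_I\rangle_k$, which proves the identity for all $I$ of length $k$. Finally, if $k<n$ then $R\subseteq F_{(k+1)}$ forces $\varepsilon_{I,p}(r)=0$ for every $r\in R$ and every $I$ of length $k$, hence $\tr_{r_i}\langle\chi_I\rangle_k=0$ for all $i$, so $\langle\chi_I\rangle_k=0$ and, by multilinearity, the $k$-fold Massey product is identically zero; this simultaneously advances the induction and delivers the ``in particular'' assertion once the induction terminates at $k=n$.

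The step I expect to be the genuine obstacle is the sign bookkeeping concentrated in the last two paragraphs: checking that the rescaled entry-cochains of $\hat\rho_I$ really satisfy conditions (i) and (ii) of the definition of a defining system, and pinning down mutually compatible conventions for the inhomogeneous coboundary, the cup product of cochains and the transgression so that the accumulated sign comes out as $(-1)^{|I|-1}$ rather than its negative. By contrast, the group-theoretic ingredients --- that $\rho_I$ annihilates monomials of degree $>k$, that $\hat\rho_I$ takes values in $U_{k+1}$, and that it factors through $G$ modulo the center --- are immediate consequences of $R\subseteq F_{(n)}$ together with the characterization $f\in F_{(m)}\iff\varepsilon_{J,p}(f)=0$ for all $|J|<m$.
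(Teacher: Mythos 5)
Your proposal is correct, but note that the paper itself gives no proof of Theorem \ref{epsilonmaps}: it is imported verbatim from Morishita and Vogel (\cite{MM2}, Th.~2.2.2 and \cite{DV2}, Prop.~1.2.6). Your route --- inducting on $k$ to get unique definedness, then building a defining system from the unipotent upper-triangular Magnus representation $\hat\rho_I\colon F\to U_{k+1}(\F_p)$ whose entries are the $\varepsilon$-coefficients of subwords of $I$, and evaluating via the transgression for $1\to R\to F\to G\to 1$ --- is essentially the argument of those references, so it matches the source; the only deferred item is the sign normalization of the entry-cochains, which is precisely where the factor $(-1)^{|I|-1}$ originates and is in any case invisible in the paper's application at $p=2$.
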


The above result gives rise to the following

\begin{defi}
\label{zassinvdefi}
 Let $G$ be a finitely generated pro-$p$-group. We define the \f{Zassenhaus invariant} $\z(G)\in\N\cup\{\infty\}$ to be the supremum of all natural numbers $n$ satisfying one of the following equivalent conditions:
\begin{itemize}
 \item[\rm (i)] If $1\to R\to F\to G\to 1$ is a minimal presentation of $G$, then $R\subseteq F_{(n)}$.
 \item[\rm (ii)] The $k$-fold Massey product $H^1(G)^k\to H^2(G)$ is uniquely defined and identically zero for $2\le k< n$.
\end{itemize}
\end{defi}

Note that $\z(G)=\infty$ if and only if $G$ is free. Now assume that $G$ is \f{finitely presented}, i.e.\ $h^1(G),h^2(G)<\infty$. Let 
\[
G=\langle x_1,\ldots, x_d\ |\ r_1,\ldots, r_m\rangle
\]
be a minimal presentation of $G$, i.e.\ $G=F/R$ where $F$ is the free pro-$p$-group on generators $x_1,\ldots,x_d,\ d=h^1(G)$ and $R$ is generated by $r_1,\ldots, r_m$ as closed normal subgroup of $F$. Let $\rho_i,\ i=1,\ldots, m$ denote the initial form of $r_i$ in the graded $\F_p$-Lie algebra
\[
 \gr F = \bigoplus_{n\ge 1} F_{(n)} / F_{(n+1)}.
\]
The map $\psi: F\hookrightarrow\power$ induces a inclusion (of $\F_p$-Lie algebras)
\[
 \gr F\subseteq \gr \power = \bigoplus_{n\ge 0} \power_n / \power_{n+1} = \polyx
\]
where $\power_n\subseteq \power$ denotes the two-sided ideal of power series of degree $\ge n$ and $\polyx$ is the free associative $\F_p$-algebra on $X=\{X_1,\ldots, X_d\}$. The presentation $G=\langle x_1,\ldots, x_d\ |\ r_1,\ldots, r_m\rangle$ is called \f{strongly free}, if the images of $\rho_i$ in $\polyx$ form a \f{strongly free sequence}. (The notion of strongly free sequences is due to D.\ Anick, see \cite{JGMasseyMild}, Def.2.7 for a definition.) We say that $G$ is a \f{mild pro-$p$-group} (with respect to the Zassenhaus filtration) if it possesses a strongly free presentation.

\begin{rema}
 The theory of mild groups has originally been developed by J.\ Labute in the case of discrete groups and later applied to pro-$p$-groups in \cite{JL}.  Note that they can be defined with respect to different filtrations such as weighted lower $p$-central series and weighted Zassenhaus filtrations.
\end{rema}

The main properties of mild pro-$p$-groups is given by the following

\begin{theo}
 Let $G$ be a mild pro-$p$-group and $G=\langle x_1,\ldots, x_d\ |\ r_1, \ldots, r_m\rangle$ a strongly free presentation. Then $h^2(G)=m$, $G$ is of cohomological dimension $\cd G=2$ and if $m\neq d-1$ it is not $p$-adic analytic.
\end{theo}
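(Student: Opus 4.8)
The plan is to deduce all three assertions from the structure of the graded algebra $\gr\Omega_G$ associated to the $I_G$-adic filtration on the complete group algebra $\Omega_G=\grg$. Write $A=\polyx=\gr\Omega_F$ and $d_i=\deg\rho_i\ge 2$. The essential input is a theorem of Anick, in the form adapted to the Zassenhaus filtration by Labute (cf.\ \cite{JL}, \cite{JGMasseyMild}): \emph{since $(\rho_1,\ldots,\rho_m)$ is a strongly free sequence, the canonical surjection $A/(\rho_1,\ldots,\rho_m)\surj\gr\Omega_G$ induced by $\Omega_F\surj\Omega_G$ is an isomorphism.} Equivalently, the initial forms of a minimal relation system already generate the ideal of all initial forms of $R$, so that no unexpected relations appear after passing to $\gr$. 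I would take this as a black box; proving it — comparing the Hilbert series $(1-dt+\sum_i t^{d_i})^{-1}$ of $A/(\rho_1,\ldots,\rho_m)$ with that of $\gr\Omega_G$ and exploiting the inertness of a strongly free sequence — is the genuine content of mildness, and is the step I expect to be the real obstacle; the remainder is formal.

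Granting this, set $B=A/(\rho_1,\ldots,\rho_m)$; we may assume $m\ge 1$, as otherwise $G$ is free and there is nothing of interest. By the very definition of strong freeness (Anick's resolution), the graded $B$-module $\F_p$ has a minimal free resolution of length $2$, of the shape $0\to B^{m}\to B^{d}\to B\to\F_p\to 0$ (with the evident grading shifts). Since $\Omega_G$ is complete and separated for its filtration and $\gr\Omega_G=B$, I would lift this resolution term by term to a length-$2$ complex of finitely generated free $\Omega_G$-modules whose associated graded complex is the one above; completeness of the filtration forces the lifted complex to be exact, so it is a free resolution of $\F_p$ over $\Omega_G$, and choosing the lifts of the differentials with entries in $I_G$ makes it the \emph{minimal} one. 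Its length gives $\cd G=\operatorname{pd}_{\Omega_G}\F_p\le 2$, and its ranks give $h^i(G)=\dimfp\operatorname{Tor}_i^{\Omega_G}(\F_p,\F_p)=1,d,m,0,\ldots$, so in particular $h^2(G)=m$. On the other hand $G$ is not free, since its defining relations are nontrivial ($\rho_i\ne 0$, hence $r_i\ne 1$), and a pro-$p$-group of cohomological dimension $\le 1$ is free \cite{NSW}; hence $\cd G=2$.

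For the final assertion I would argue by Euler characteristics. Since $\cd G=2$, the $\F_p$-Euler characteristic of $G$ is $\chi(G)=h^0(G)-h^1(G)+h^2(G)=1-d+m$. Suppose $G$ were $p$-adic analytic. Being finitely generated pro-$p$ of finite cohomological dimension, $G$ is torsion-free, and a torsion-free $p$-adic analytic pro-$p$-group has cohomological dimension equal to its dimension; hence $\dim G=2$, and $G$ contains an open subgroup $U\cong\Z_p^2$ (its open uniform subgroups, having generator rank $2$, are abelian). Then $\chi(U)=1-2+1=0$, and since $\Omega_G$ is free of rank $[G:U]$ over $\Omega_U$ the Euler characteristic is multiplicative, so $\chi(G)=\chi(U)/[G:U]=0$, i.e.\ $m=d-1$. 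Consequently, if $m\ne d-1$ then $G$ is not $p$-adic analytic. (Alternatively, $p$-adic analyticity would force $\gr\Omega_G=B$ to have polynomially bounded dimensions; but if $m\ne d-1$ then $1-dt+\sum_i t^{d_i}$ does not vanish at $t=1$, so by Pringsheim's theorem its least positive root is $<1$ and $\dimfp B_n$ grows exponentially.)
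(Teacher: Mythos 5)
The paper itself does not prove this theorem: it simply refers to \cite{JGMasseyMild}, Th.\ 2.12 (a variant of \cite{JL}, Th.\ 5.1), and your argument is essentially a reconstruction of that proof. You correctly isolate the genuine content as the Anick--Labute theorem that a strongly free sequence forces $\gr\Omega_G\cong \polyx/(\rho_1,\ldots,\rho_m)$, and the derivation of $\cd G\le 2$ and $h^2(G)=m$ by lifting the length-$2$ Anick resolution through the complete filtration on $\Omega_G$ and reading off $\operatorname{Tor}_i^{\Omega_G}(\F_p,\F_p)$ from a minimal resolution is exactly the standard route; the Euler--Poincar\'e argument for non-analyticity is likewise the one used by Labute. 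So in substance this is the same proof as the one the paper delegates to the literature.

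One parenthetical claim in your last step is false as stated: an open uniform subgroup $U$ of a torsion-free $2$-dimensional $p$-adic analytic group need \emph{not} be abelian. The powerful $\Z_p$-Lie algebra $L=\Z_p x\oplus\Z_p y$ with $[x,y]=py$ gives a non-abelian uniform group of dimension $2$ all of whose open subgroups are again non-abelian, so no open subgroup is isomorphic to $\Z_p^2$. The conclusion you need, $\chi(U)=0$, is nevertheless correct and does not require abelianness: by Lazard's theorem the $\F_p$-cohomology of a uniform group of dimension $n$ is the exterior algebra on $H^1(U)$, so $h^i(U)=\binom{n}{i}$ and $\chi(U)=(1-1)^n=0$ for $n\ge 1$; combined with the multiplicativity $\chi(U)=[G:U]\,\chi(G)$ this gives $1-d+m=\chi(G)=0$ exactly as you want. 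With that justification substituted (or with your alternative growth argument via the Hilbert series $\bigl(1-dt+\sum_i t^{d_i}\bigr)^{-1}$ and Pringsheim, which is sound), the proof is complete.
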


For a proof we refer to \cite{JGMasseyMild}, Th.2.12 which is a slight generalization of \cite{JL}, Th.5.1.

In order to find arithmetical examples of mild pro-$p$-groups $G$ with Zassenhaus invariant $\z(G)=3$, we will make use of the following result, cf.\ \cite{JGMasseyMild} Th.4.9:

\begin{theo}
\label{cohomologicalcrit}
 Let $p$ be a prime number and $G$ a finitely presented pro-$p$-group with $n=\z(G)<\infty$. Assume that $H^1(G)$ admits a decomposition $H^1(G)=U\oplus V$ as $\F_p$-vector space such that for some natural number $e$ with $1\le e \le n-1$ the $n$-fold Massey product $\langle\cdot, \ldots, \cdot \rangle_n: H^1(G)^n \longrightarrow H^2(G)$ satisfies the following conditions:
\begin{itemize}
 \item[\rm (a)] $\langle \xi_1,\ldots, \xi_n \rangle_n =0$ for all tuples $(\xi_1,\ldots, \xi_n)\in H^1(G)^n$ such that $\#\{i\ |\ \xi_i\in V\}\ge n-e+1$.
 \item[\rm (b)] $\langle\cdot, \ldots, \cdot \rangle_n$ maps 
\[
U^{\otimes e} \otimes V^{\otimes n-e}
\]
surjectively onto $H^2(G)$.
\end{itemize}
Then $G$ is mild (with respect to the Zassenhaus filtration).
\end{theo}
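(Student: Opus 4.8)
The plan is to exhibit a strongly free minimal presentation of $G$ — possibly with respect to a weighted variant of the Zassenhaus filtration — and then to read off mildness; the case $n=2,\ e=1$ is exactly Schmidt's cup-product criterion, and I would aim to generalise Labute's treatment of it. Fix a minimal presentation $1\to R\to F\to G\to 1$ with $F$ free pro-$p$ on $x_1,\dots,x_d$, $d=h^1(G)$, and $R$ normally generated by a minimal system $r_1,\dots,r_m$, $m=h^2(G)$, the $x_i$ being chosen so that the dual basis $\chi_1,\dots,\chi_d$ of $H^1(G)$ is adapted to $H^1(G)=U\oplus V$, say $U=\langle\chi_1,\dots,\chi_a\rangle$, $V=\langle\chi_{a+1},\dots,\chi_d\rangle$. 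As $\z(G)=n$ we have $R\subseteq F_{(n)}$, so by Theorem \ref{epsilonmaps} the degree-$n$ component of the Magnus expansion $\psi(r_j)$ — which is the initial form $\rho_j\in\gr F\subseteq\polyx$ when nonzero — equals $(-1)^{n-1}\sum_{|I|=n}\tr_{r_j}\langle\chi_I\rangle_n\,X_I$. Multilinearity of $\langle\,\cdot\,\rangle_n$ and condition (a) force $\langle\chi_I\rangle_n=0$ whenever fewer than $e$ letters of $I$ have index $\le a$, so each $\rho_j$ is supported on monomials with at least $e$ letters among $X_1,\dots,X_a$; dually to (b), and since $\{\tr_{r_j}\}$ is a basis of $H^2(G)^\vee$, the matrix of coefficients in the $\rho_j$ of the ``block'' monomials $X_{u_1}\cdots X_{u_e}X_{v_1}\cdots X_{v_{n-e}}$ ($u_\bullet\le a<v_\bullet$) has rank $m$. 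In particular every $\rho_j$ is nonzero and homogeneous of degree exactly $n$, and after an invertible $\F_p$-linear change within $r_1,\dots,r_m$ the ``type $(e,n-e)$'' parts of the $\rho_j$ (the monomials with exactly $e$ letters of index $\le a$) are in echelon form with distinct block monomials as pivots.

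Next I would isolate these type-$(e,n-e)$ parts by a weighting following Labute's method: give $x_i$ weight $2e+1$ for $i\le a$ and weight $2e$ for $i>a$. Among all monomials occurring in $\psi(r_j)$, of whatever degree, the minimal $w$-weight is then attained exactly by the type-$(e,n-e)$ monomials of $\rho_j$ — (a) excludes degree-$n$ monomials with fewer $U$-letters, the chosen weight ratio makes degree-$n$ monomials with more $U$-letters strictly heavier, and the same ratio keeps all higher-degree monomials strictly heavier still. Hence the $w$-initial form of $r_j$ in $\polyx$ is precisely the type-$(e,n-e)$ part of $\rho_j$, and these form an echelon system $\rho_1^w,\dots,\rho_m^w$ of homogeneous (Lie) elements of a single $w$-degree, each of the shape (pivot block monomial) + (further type-$(e,n-e)$ monomials).

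It then remains to show that $\rho_1^w,\dots,\rho_m^w$ is a strongly free sequence in $\polyx$; granting this, the presentation is strongly free for the weighted Zassenhaus filtration, so $G$ is mild, the defining property being unaffected by the passage between the weighted and the ordinary Zassenhaus filtration (cf.\ \cite{JGMasseyMild}, \cite{JL}). The route would be to fix an admissible monomial order for which the leading term of each $\rho_j^w$ is a block monomial $X_{u_1}\cdots X_{u_e}X_{v_1}\cdots X_{v_{n-e}}$, and then to note that any set of distinct such block monomials is \emph{combinatorially free}: a nonempty proper suffix of one ends in a letter of index $>a$ whereas a proper prefix of another begins with a letter of index $\le a$, so they can never coincide, and — all degrees being equal — none divides another. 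Anick's criterion (as recalled in \cite{JGMasseyMild}) then gives strong freeness of this leading-term system, hence of $\rho_1^w,\dots,\rho_m^w$, via the usual comparison of associated gradeds.

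The main obstacle is this last step. Conditions (a) and (b) are tailored precisely for it — (a) confines the initial forms to ``$U$-heavy'' monomials so that the weighting can cut out a piece supported on type-$(e,n-e)$ monomials, while (b) furnishes the full-rank family of block monomials needed to normalise the leading terms — but ensuring, after Gaussian elimination, that the leading (equivalently pivot) monomials genuinely form a combinatorially free set, and not merely a set of distinct monomials, is where the real work lies: a priori the lexicographically largest type-$(e,n-e)$ monomial of $\rho_j^w$ need not be of block shape, and one must exploit the ``$U$-first'' asymmetry built into (b), a careful choice of monomial order, and possibly the Lie-algebra structure of the $\rho_j^w$, to force it. Once that is secured, the remaining bookkeeping is routine.
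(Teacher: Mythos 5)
The paper itself offers no proof of this statement: it is quoted from \cite{JGMasseyMild}, Th.~4.9, so your attempt must stand on its own. The reduction you set up is sound as far as it goes: conditions (a) and (b) do translate, via Theorem \ref{epsilonmaps} and duality, into the assertion that each degree-$n$ initial form $\rho_j$ is supported on words with at least $e$ letters from the $U$-alphabet and that the $m\times (\dim U)^e(\dim V)^{n-e}$ matrix of coefficients of the block words $X_{u_1}\cdots X_{u_e}X_{v_1}\cdots X_{v_{n-e}}$ has rank $m$; the weighting $w(x_u)=2e+1$, $w(x_v)=2e$ does isolate the type-$(e,n-e)$ part; and distinct block words are indeed combinatorially free.

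The gap is the step you explicitly defer, and it is not bookkeeping --- it is the mathematical content of the theorem. Conditions (a) and (b) say nothing about the coefficients of the \emph{interleaved} type-$(e,n-e)$ words, and these can defeat Anick's criterion outright: for $e=2$, $n=3$ the word $X_uX_vX_u$ is of type $(2,1)$ but is not combinatorially free even as a singleton (its proper suffix $X_u$ equals its proper prefix), and no multiplicative order makes every block word of every $\rho_j^w$ dominate every interleaved word that might occur (in a $U$-first order $X_1X_3X_1$ beats $X_2X_2X_3$; in a $V$-first order the words beginning with $V$-letters win). What rescues the argument --- and what your proposal only gestures at with ``possibly the Lie-algebra structure'' --- is that the $\rho_j$ are initial forms of group relations, hence lie in the free (restricted) Lie algebra inside $\polyx$; equivalently, the coefficients $\varepsilon_{I,p}(r_j)$ satisfy the shuffle identities of \cite{JGMasseyMild}, Prop.~4.8. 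These identities express every interleaved type-$(e,n-e)$ coefficient in terms of block coefficients (for instance, for $e=1$, $n=3$, shuffling $(v)$ into $(u,v')$ gives $\langle v,u,v'\rangle=-\langle u,v,v'\rangle-\langle u,v',v\rangle$, and then $\langle v,v',u\rangle=\langle u,v',v\rangle$), and in the example above they force the coefficient of $X_uX_vX_u$ to vanish. Proving this in general and then exhibiting an order for which the resulting leading terms are combinatorially free (for $n=2$ Labute and Schmidt do this via Lazard elimination rather than by bare inspection of block words) is the actual proof; without it your argument does not close. A secondary point: what you would obtain is strong freeness of the $w$-initial forms, whereas the theorem asserts mildness for the ordinary Zassenhaus filtration, so you also need the standard lemma that strong freeness of initial forms with respect to a refining grading implies strong freeness of the original homogeneous sequence --- true, but it should be cited rather than waved at.
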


\section{Totally real pro-$2$-extensions with wild ramification}

As has been remarked in the introduction, the theory of mild pro-$p$-groups has had a great impact in the study of pro-$p$-extensions of number fields with restricted ramification. From the group-theoretical point of view, an important tool in the proof of Schmidt's theorem \cite{AS2}, Th.1.1 is the \f{cup-product criterion}. The groups $G_{S\cup S_0}(p)$ that are obtained by enlarging the set $S$ and satisfy $\cd G_{S\cup S_0}(p)=2$ have Zassenhaus invariant $\z(G_{S\cup S_0}(p))=2$. 

On the other hand, for sets of primes $S$ such that $\z(G_S(p))\ge 3$ there have been no known examples where the cohomological dimension of $G_S(p)$ is finite. Having the generalized criterion \ref{cohomologicalcrit} using higher Massey products at hand, the question naturally arises whether mild pro-$p$-groups with Zassenhaus invariant at least $3$ arise as arithmetically defined Galois groups, i.e.\ (quotients of) groups of the form $G_S(p)$. In the following we give a positive answer in the case of pro-$2$-extensions of the rationals.
\vspace{10pt}

We fix the following notation: For a finite set $S$ of primes of $\Q$, we denote by $\Q_S(2)|\Q$ the maximal pro-$2$-extension of $\Q$ unramified outside $S$ and denote by $G_S(2)=\Gal(\Q_S(2)|\Q)$ its Galois group. Let $\infty$ denote the infinite prime of $\Q$. We consider the local extension $\C|\R$ as being ramified, i.e.\ if $\infty\not\in S$, $\Q_S(2)|\Q$ is totally real.

An explicit construction of mild groups of the form $G_S(2)$ with Zassenhaus invariant $3$ amounts to giving an arithmetic description of the triple Massey product
\[
\langle \cdot,\cdot,\cdot \rangle_3: H^1(G_S(2)) \times H^1(G_S(2)) \times H^1(G_S(2)) \longrightarrow H^2(G_S(2)),
\]
or, equivalently, of the relation structure of $G_S(2)$ modulo the fourth step of the Zassenhaus filtration. Such a description has been given by M.\ Morishita and D.\ Vogel in terms of \f{Rédei symbols} in the case where $S$ is a finite set consisting of the infinite prime and prime numbers $l_1,\ldots, l_n\equiv 1\mod 4$ such that the Legendre symbols 
$\left(\frac{l_i}{l_j}\right)$ are pairwise trivial. However, for such sets $S$ the extension $\Q_S(2)$ is always totally imaginary and therefore the Galois group $G_S(2)$, having $2$-torsion, satisfies $\cd G_S(2)=\infty$. Consequently, in order to construct mild examples, we have to remove ramification at $\infty$. 

By results of I.R.\ \v{S}afarevi\v{c} and H.\ Koch, there is a presentation for $G_S(2)$ in terms of generators and relations of local Galois groups provided that the \f{\v{S}afarevi\v{c}-Tate group} $\Sha_S(2)$ given by the exact sequence
\[
\begin{tikzpicture}[description/.style={fill=white,inner sep=2pt}, bij/.style={above,sloped,inner sep=1.5pt}]
\matrix (m) [matrix of math nodes, row sep=3em,
column sep=2.5em, text height=1.5ex, text depth=0.25ex]
{ 0 & \Sha_S(2) & H^2(G_S(2),\F_2) & \prod_{l\in S} H^2(G_l(2),\F_2)\\};
\path[->,font=\scriptsize]
(m-1-1) edge node[auto] {} (m-1-2)
(m-1-2) edge node[auto] {} (m-1-3)
(m-1-3) edge node[auto] {} (m-1-4);
\end{tikzpicture}
\]                                                                                                                                                                            
vanishes where $G_l(2)$ denotes the Galois group of the maximal pro-$2$-extension of $\Q_l$ if $l$ is a finite prime and $G_l(2)=\Gal(\C|\R)\cong \Z/2\Z$ if $l=\infty$. We have a natural inclusion $\Sha_S(2)\hookrightarrow \rusb_S(2)$ into the Pontryagin dual $\rusb_S(2)=(V_S(2))^\vee$ of the \f{Kummer group}
\[
 V_S(2)=\{x\in \Q^\times|\ x\in U_l \Q_l^{\times 2}\ \mbox{if}\ l\not\in S,\ x\in\Q_l^{\times 2}\ \mbox{if}\ l\in S\}/\Q^{\times 2}
\]
where $U_l=\Z_l^\times$ if $l\neq\infty$ and $U_l=\R^\times$ if $l=\infty$. Noting that 
\[
V_\varnothing(2)=\{\pm 1\} \Q^{\times 2}/\Q^{\times 2}\cong \{\pm 1\},
\]
we see that if $\infty\not\in S$ then $V_S(2)=1$ if and only if $S$ contains $2$ or a prime $l\equiv 3\mod 4$. However if $S$ contains more than one prime $l\equiv 3\mod 4$, we have $\z(G_S(2))=2$ since by quadratic reciprocity there are non-trivial Legendre symbols. If $S$ contains exactly one prime $l\equiv 3\mod 4$ then it can be shown that $G_S(2)$ is never mild. Consequently in the following we consider a finite $S$ of rational primes such that $2\in S, \infty\not\in S$. Recall that for a pro-$2$-group $G$ we set $H^i(G)=H^i(G,\F_2),\ h^i(G)=\dim_{\F_2} H^i(G)$. We start with the following

\begin{prop}
\label{gsrank}
Let $S=\{2, l_1, \dots, l_n\}$ for some $n\ge 1$ and odd prime numbers $l_1, \ldots, l_n$. Then the pro-$2$-group $G_S(2)$ has cohomological dimension $\cd G_S(2)=2$ and the generator and relation ranks satisfy 
\[
h^1(G_S(2)) = n+1, \ h^2(G_S(2)) = n.
\]
Furthermore, the abelianization $G_S(2)^{ab}$ is infinite.
\end{prop}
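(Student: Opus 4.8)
The plan is to treat the four assertions in two groups: the generator rank $h^1(G_S(2))=n+1$ together with the infiniteness of $G_S(2)^{ab}$ are read off directly from class field theory, while $\cd G_S(2)=2$ and $h^2(G_S(2))=n$ rest on the \v{S}afarevi\v{c}--Koch presentation of $G_S(2)$, which is available because $\Sha_S(2)=0$. For the generator rank I would argue by Kummer theory: $H^1(G_S(2))\cong H^1(G_S(2),\mu_2)$ is identified with the subspace of $\Q^\times/\Q^{\times 2}$ consisting of classes $[a]$ for which $\Q(\sqrt a)|\Q$ is unramified outside $S$. Being unramified at the finite primes forces $[a]$ into the subgroup generated by the $\F_2$-linearly independent classes of $-1,2,l_1,\dots,l_n$, which has dimension $n+2$; being unramified at the infinite prime is the additional condition $a>0$, a nonzero $\F_2$-linear form (it does not vanish on $[-1]$), which cuts the dimension down to $n+1$. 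Hence $h^1(G_S(2))=n+1$. For the infiniteness of $G_S(2)^{ab}$ it suffices that the cyclotomic $\Z_2$-extension of $\Q$ is totally real, pro-$2$ and ramified only at $2$, hence is a pro-$2$-extension of $\Q$ unramified outside $S$; it is therefore contained in $\Q_S(2)$, so $G_S(2)^{ab}$ has $\Z_2$ as a quotient and is infinite.

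For $\cd G_S(2)=2$ and $h^2(G_S(2))=n$ I would use that, since $2\in S$ and $\infty\notin S$, the Kummer group $V_S(2)$ is trivial, so $\Sha_S(2)\hookrightarrow\rusb_S(2)=V_S(2)^\vee$ vanishes. By the theory of \v{S}afarevi\v{c} and Koch (cf.\ \cite{NSW}, Ch.\ VIII, and the use made of it in \cite{JL}), the vanishing of $\Sha_S(2)$ gives $\cd G_S(2)=2$ together with a presentation of $G_S(2)$ whose relations come from those of the local groups $G_l(2)$, $l\in S$, subject to the single global reciprocity relation among them. Equivalently, the localization map $H^2(G_S(2))\to\bigoplus_{l\in S}H^2(G_l(2))$ is injective and its cokernel is one-dimensional, being Pontryagin dual to $H^0(G_S(2),\mu_2)=\mu_2(\Q)$. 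As each $l\in S$ is finite, $\dim_{\F_2}H^2(G_l(2))=\dim_{\F_2}\mathrm{Br}(\Q_l)[2]=1$, and counting gives $h^2(G_S(2))=\#S-1=n$. In particular $h^2(G_S(2))\ge 1$, so $\cd G_S(2)$ equals $2$; note also that $h^1-h^2=1$ matches the $\Z_2$-rank of $G_S(2)^{ab}$ observed above.

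The step that needs genuine care is the appeal to the \v{S}afarevi\v{c}--Koch theory, above all the bound $\cd G_S(2)\le 2$. For $p=2$ and $k=\Q$ this fails once the real place is allowed to ramify — $G_{S\cup\{\infty\}}(2)$ contains a complex conjugation of order $2$ and has infinite cohomological dimension — so the argument must genuinely use both that $\infty\notin S$ (hence $\Q_S(2)|\Q$ is totally real and the decomposition group at the real place is trivial) and that $V_S(2)=\Sha_S(2)=0$. It is precisely this vanishing, forced by including $2$ in $S$, that makes the nine-term Poitou--Tate sequence for $G_S(2)$ collapse as in the favourable (e.g.\ totally imaginary) case, and thereby controls both $\cd G_S(2)$ and the cokernel of the localization map; granting this, the remainder is routine bookkeeping with local invariants.
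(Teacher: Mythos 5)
Your proposal is correct, and it rests on the same underlying machinery as the paper's proof --- the triviality of $V_S(2)$ and hence of $\Sha_S(2)$, plus the global duality theory of \cite{NSW}, Ch.\ X --- but the bookkeeping is organized differently. The paper invokes \cite{NSW}, Th.\ 10.6.1 (applicable precisely because $2\in S$ and $\infty\notin S$) to get both $\cd G_S(2)\le 2$ and the Euler--Poincar\'e identity $1-h^1+h^2=0$, then obtains $h^1=n+1$ from the generator rank formula \cite{NSW}, Th.\ 10.7.12 using $\rusb_S=0$, and reads off $h^2=n$ from the Euler characteristic; the infiniteness of $G_S(2)^{ab}$ is then deduced from $h^2<h^1$, with your cyclotomic $\Z_2$-extension argument mentioned only as an alternative. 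You instead compute $h^1$ by an elementary Kummer-theoretic enumeration of the quadratic subextensions, which is correct and arguably more transparent: it exhibits the $n+1$ independent characters cut out by $2,l_1,\dots,l_n$ and shows exactly where the hypothesis $\infty\notin S$ enters (the positivity condition killing $[-1]$). You then get $h^2=n$ from the tail of the Poitou--Tate sequence (injectivity of localization from $\Sha_S(2)=0$, one-dimensional cokernel dual to $\mu_2(\Q)$, and $h^2(G_l(2))=1$ for each finite $l\in S$) rather than from $\chi_2=0$; these are two packagings of the same duality statement and give the same count. The one point your write-up leaves as an appeal to ``\v{S}afarevi\v{c}--Koch theory'' without a precise citation is the bound $\cd G_S(2)\le 2$ in the totally real $p=2$ situation; your final paragraph correctly flags this as the delicate step, and the paper resolves it by the single reference to \cite{NSW}, Th.\ 10.6.1.
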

\begin{proof}
Since by assumption $2\in S, \infty\not\in S$, \cite{NSW} Th.10.6.1 yields $\cd G_S(2)\le 2$ and $\chi_2 (G_S(2)) = 0$ where $\chi_2 (G_S(2))=\sum_{i\ge 0} (-1)^i h^i(G_S(2))$ denotes the Euler-Poincaré characteristic of $G_S(2)$. Since $\rusb_S(\Q)=(V_S(\Q))^\vee$ is trivial, the general formula \cite{NSW}, Th.10.7.12 yields $h^1(G_S(2)) = 1+n$ and thus also the second formula $h^2(G_S(2))=n$ holds. In particular, $\cd G_S(2)=2$ and we have $h^2(G_S(2))< h^1(G_S(2))$, which implies the infiniteness of $G_S(2)^{ab}$. Alternatively, the infiniteness of $G_S(2)^{ab}$ also follows directly from the fact that $\Q_S(2)$ contains the cyclotomic $\Z_2$-extension of $\Q$. 
\end{proof}

We will now determine necessary and sufficient conditions for $\z(G_S(2))>2$. By the results of \v{S}afarevi\v{c} and Koch, we have a minimal presentation of groups of the form $G_S(p)$ with the relations determined modulo the third step of the Zassenhaus filtration. Choosing generators of inertia subgroups of $G_S(2)$ and appropriate relations of corresponding local Galois groups, this is made possible by the interplay of local and global class field theory. For the latter we make use of the idèlic formulation throughout, cf.\ \cite{NSW}, Ch.VIII. 
\vspace{10pt}

The following lemma is well-known. However, as has been pointed out to the author by D.\ Vogel, the main reference \cite{HK} contains a sign error in the case $l=2$, therefore we included the statement here:

\begin{lemm}
\label{localgaloisgroups}
Let $l$ be a prime number and $G_l(2)$ be the Galois group of the maximal $2$-extension $\Q_l(2)$ of $\Q_l$. Let $\mathcal{T}_l(2)\subseteq G_l(2)$ denote the inertia group of $G_l(2)$.
\begin{itemize}
\item[\rm (i)] If $l\neq 2$, then $\mathcal{T}_l(2)\cong \Z_2$ and $G_l(2)$ is a pro-$2$-group with two generators $\sigma, \tau$ satisfying the relation
\[
\tau^{l-1} [\tau^{-1}, \sigma^{-1}]=1
\]
where $\sigma$ denotes an arbitrary lift of the Frobenius automorphism of the maximal unramified $2$-extension of $\Q_l$ and $\tau$ is an arbitrary generator of $\mathcal{T}_l$.
 \item[\rm (ii)] If $l=2$, let $\sigma, \tau, \tilde{\tau}$ be arbitrary elements of $G_2(2)$ such that
\begin{eqnarray*}
\sigma & \equiv & (2,\Q_2(2)^{ab}|\Q_2)\mod [G_2(2), G_2(2)],\\
\tau & \equiv & (5,\Q_2(2)^{ab}|\Q_2)\mod [G_2(2), G_2(2)],\\
\tilde{\tau} & \equiv & (-1,\Q_2(2)^{ab}|\Q_2)\mod [G_2(2), G_2(2)]\\
\end{eqnarray*}
where $(\ \cdot\ , \Q_2(2)^{ab}|\Q_2)$ denotes the local norm residue symbol. Then $\sigma$ is a lift of the Frobenius automorphism of the maximal unramified $2$-extension of $\Q_2$, $\tau, \tilde{\tau}\in \mathcal{T}_2(2)$ and $\{\sigma, \tau, \tilde{\tau}\}$ form a minimal system of generators of $G_2(2)$. We have $h^2(G_2(2)) = 1$ and in a minimal presentation 
\[
\begin{tikzpicture}[description/.style={fill=white,inner sep=2pt}, bij/.style={below,sloped,inner sep=1.5pt}]
\matrix (m) [matrix of math nodes, row sep=3em,
column sep=2.5em, text height=1.5ex, text depth=0.25ex]
{1 & R & F & G_2(2) & 1\\};
\path[->,font=\scriptsize]
(m-1-1) edge node[auto] {} (m-1-2)
(m-1-2) edge node[auto] {} (m-1-3)
(m-1-3) edge node[auto] {} (m-1-4)
(m-1-4) edge node[auto] {} (m-1-5);
\end{tikzpicture}
\]
with preimages $s,t, \tilde{t}$ of $\sigma, \tau, \tilde{\tau}$ respectively, the single relation $r$ can be chosen in the form
\[
r\equiv \tilde{t}^{\phantom{.}2}[t,s]\mod F_{(3)}.
\]
Furthermore, $\tau$ and $\tilde{\tau}$ generate $\mathcal{T}_2(2)$ as a normal subgroup of $G_2(2)$.
\end{itemize}
\end{lemm}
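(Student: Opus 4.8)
The plan is to assemble the statement from local class field theory and the structure theory of maximal pro-$2$-extensions of local fields (cf.\ \cite{NSW}, Ch.\ VII, or \cite{HK}); the one delicate point is getting the signs right for $l=2$, which is why the statement is reproduced here. I would treat the unramified/tame case $l\neq 2$ through the semidirect product structure of $G_l(2)$, and the case $l=2$ through local reciprocity together with the connection between the defining relation modulo $F_{(3)}$ and cup products.

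\emph{Case $l\neq 2$.} Since $l\neq 2$ there is no wild ramification, so $\mathcal{T}_l(2)$ is the pro-$2$ part of the tame inertia group; it is torsion free and procyclic, hence $\mathcal{T}_l(2)\cong\Z_2$, and I fix a topological generator $\tau$. The quotient $G_l(2)/\mathcal{T}_l(2)\cong\Gal(\Q_l^{nr}(2)\,|\,\Q_l)$ is the pro-$2$-completion of $\widehat\Z$, hence $\cong\Z_2$ generated by the Frobenius; since $\Z_2$ is a free pro-$2$-group the extension splits and I pick a Frobenius lift $\sigma$. Then $\sigma,\tau$ topologically generate $G_l(2)$, and $h^1(G_l(2))=\dim_{\F_2}\Q_l^\times/\Q_l^{\times 2}=2$ shows they form a minimal generating set. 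Conjugation makes the abelian group $\mathcal{T}_l(2)$ a module over $G_l(2)/\mathcal{T}_l(2)$ via the cyclotomic character, and the Frobenius acts on $2$-power roots of unity by $\zeta\mapsto\zeta^l$, so $\sigma\tau\sigma^{-1}=\tau^l$, which with the commutator convention $[a,b]=a^{-1}b^{-1}ab$ is exactly $\tau^{l-1}[\tau^{-1},\sigma^{-1}]=1$. Since $G_l(2)$ is a Demushkin group of rank $2$ one has $h^2(G_l(2))=1$, so this single relation gives a (minimal) presentation.

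\emph{Case $l=2$.} By local reciprocity $G_2(2)^{ab}/2\cong\Q_2^\times/\Q_2^{\times 2}$ is three-dimensional with basis the classes of $-1,5,2$; hence $h^1(G_2(2))=3$, and any elements $\tilde\tau,\tau,\sigma$ congruent modulo $[G_2(2),G_2(2)]$ to the norm residue symbols of $-1,5,2$ form a minimal generating system. As $2$ is a uniformizer, $\sigma$ restricts to a Frobenius on the maximal unramified pro-$2$-extension, and as $5,-1$ are units, $\tau,\tilde\tau\in\mathcal{T}_2(2)$. Since $G_2(2)$ is a Demushkin group of rank $3$, $h^2(G_2(2))=1$, so there is a single defining relation $r$. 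To pin down $r$ modulo $F_{(3)}$ I would apply Theorem \ref{epsilonmaps} with $n=2$: it identifies the degree-$2$ part of the Magnus expansion of $r$ with $\sum_{i,j}\tr_r(\chi_i\cup\chi_j)X_iX_j$, where $\chi_1,\chi_2,\chi_3\in H^1(G_2(2))$ is the basis dual to the chosen generators, and comparing with the Magnus expansions of $x_i^2$ and $[x_i,x_j]$ in $\gr_2 F$ gives
\[
r\equiv\prod_i(x_i^2)^{\tr_r(\chi_i\cup\chi_i)}\prod_{i<j}[x_i,x_j]^{\tr_r(\chi_i\cup\chi_j)}\pmod{F_{(3)}}.
\]
Via the inflation isomorphisms $H^1(G_2(2),\F_2)\cong H^1(G_{\Q_2},\F_2)$ and $H^2(G_2(2),\F_2)\cong H^2(G_{\Q_2},\F_2)\cong\F_2$, the cup product becomes the quadratic Hilbert symbol; since the Gram matrix of this pairing on the basis $\{-1,5,2\}$ turns out to be an involution, the exponents $\tr_r(\chi_i\cup\chi_j)$ are simply the Hilbert symbols among $-1,5,2$. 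It then remains to evaluate the six symbols over $\Q_2$, and one checks that only $(-1,-1)_2$ and $(5,2)_2$ are nontrivial (for instance $-1$ is not a sum of two squares in $\Q_2$, $5$ is not a local norm from $\Q_2(\sqrt 2)$ whereas $-1$ is, and every unit is a norm from the unramified extension $\Q_2(\sqrt 5)$). Writing $x_1,x_2,x_3$ and their preimages $\tilde t,t,s$ for the generators $\tilde\tau,\tau,\sigma$ attached to $-1,5,2$, this yields $r\equiv\tilde t^{\,2}[t,s]\pmod{F_{(3)}}$.

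For the last assertion, $\tau,\tilde\tau\in\mathcal{T}_2(2)$ and $\mathcal{T}_2(2)$ is normal in $G_2(2)$, so the normal closure $N$ of $\{\tau,\tilde\tau\}$ is contained in $\mathcal{T}_2(2)$, and $G_2(2)/N$ surjects onto $G_2(2)/\mathcal{T}_2(2)\cong\Z_2$. But $G_2(2)/N$ is topologically generated by the image of $\sigma$ alone, hence is a quotient of the free pro-$2$-group of rank one, i.e.\ of $\Z_2$; a singly generated pro-$2$-group admitting a surjection onto $\Z_2$ must itself be $\cong\Z_2$, which forces $N=\mathcal{T}_2(2)$. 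The main obstacle is the cup-product bookkeeping for $l=2$: attaching the square and the commutator to the correct generators, equivalently evaluating the $\Q_2$-Hilbert symbols without a sign slip, which is exactly the point at which \cite{HK} errs.
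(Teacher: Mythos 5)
Your proof is correct, but it takes a genuinely different route from the paper. The paper disposes of the lemma almost entirely by citation: part (i) is \cite{HK}, Th.\,10.2, and part (ii) follows by checking that $\{\pi,\alpha_0,\alpha_1\}=\{2,5,-1\}$ satisfies the hypotheses of \cite{HK}, Lemma 10.10 (after correcting the sign condition $(\alpha_1,\pi)=-1$ to $(\alpha_1,\pi)=1$) and then invoking \cite{HK}, Th.\,10.12; the only argument carried out in the paper is the final claim that $\tau,\tilde\tau$ normally generate $\mathcal{T}_2(2)$, which it proves by a Hochschild--Serre computation showing $\dim_{\F_2}H^1(\mathcal{T}_2(2))^{\Gamma_2(2)}=2$ and a dimension count. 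You instead reprove the content of Koch's theorems: the tame semidirect-product description for $l\neq 2$, and for $l=2$ the Demushkin count $h^1=3$, $h^2=1$ together with the identification of the degree-$2$ part of the relation with the cup product (via Theorem \ref{epsilonmaps}) and an explicit evaluation of the Hilbert symbols among $-1,5,2$; your observation that the Gram matrix of the pairing on this basis is an involution is exactly the bookkeeping needed to pass between the reciprocity basis of generators and the Kummer basis of characters, and your symbol values are right, so you land on $r\equiv\tilde t^{\,2}[t,s]$ correctly. For the last claim your argument (the normal closure $N$ of $\{\tau,\tilde\tau\}$ gives a procyclic quotient $G_2(2)/N$ surjecting onto $G_2(2)/\mathcal{T}_2(2)\cong\Z_2$, forcing $N=\mathcal{T}_2(2)$) is simpler than the paper's and avoids needing $h^2(\Gamma_2(2))=0$. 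What your approach buys is self-containedness and a transparent explanation of \emph{why} the sign in \cite{HK} must be corrected; what it costs is that a few routine steps are left implicit, chiefly that in case (i) you should note that the exhibited relation actually normally generates $R$ (i.e.\ is nonzero in $R/[F,R]R^2$, which follows from the nontriviality of the relevant Hilbert symbol over $\Q_l$), not merely that $h^2=1$ guarantees the existence of some single defining relation.
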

\begin{proof}
The first statement is a special case of \cite{HK}, Th.10.2. For the second statement note that for $\pi=2,\ \alpha_0=5,\ \alpha_1=-1$ the set $\{\pi, \alpha_0, \alpha_1\}$ is a basis of $\Q_2^\times/\Q_2^{\times 2}$ satisfying the conditions of \cite{HK}, Lemma 10.10. (Note that with a view to the correctness of the following Th.10.12 loc. cit., this lemma contains a sign error in (iv); the condition $(\alpha_1,\pi) = -1$ has to be replaced by $(\alpha_1,\pi) = 1$, which holds for the above choice.) Then \cite{HK}, Th.10.12 yields (ii). In fact, in order to see that $\tau, \tilde{\tau}$ generate $\mathcal{T}_2(2)$ as a normal subgroup of $G_2(2)$, let $\Gamma_2(2)=G_2(2)/\mathcal{T}_2(2)$ and consider the Hochschild-Serre exact sequence
\[
\begin{tikzpicture}[description/.style={fill=white,inner sep=2pt}, bij/.style={below,sloped,inner sep=1.5pt}]
\matrix (m) [matrix of math nodes, row sep=3em,
column sep=2.3em, text height=1.5ex, text depth=0.25ex]
{0 & H^1(\Gamma_2(2)) & H^1(G_2(2)) & H^1(\mathcal{T}_2(2))^{\Gamma_2(2)} & H^2(\Gamma_2(2)),\\};
\path[->,font=\scriptsize]
(m-1-1) edge node[auto] {} (m-1-2)
(m-1-2) edge node[auto] {} (m-1-3)
(m-1-3) edge node[auto] {} (m-1-4)
(m-1-4) edge node[auto] {} (m-1-5);
\end{tikzpicture}
\]
which yields $\dim_{\F_2} H^1(\mathcal{T}_2(2))^{\Gamma_2(2)}=2$ since $h^1(\Gamma_2(2))=1,\ h^2(\Gamma_2(2))=0$. The elements $\tau, \tilde{\tau}\in\mathcal{T}_2(2)$ are contained in a minimal system of generators of $G_2(2)$, hence they are linearly independent modulo $\mathcal{T}_2(2)^2[G_2(2),\mathcal{T}_2(2)]$. Now the claim follows by reason of dimension.
\end{proof}

We return to the global Galois group $G_S(2)$. Let $S=\{l_0,l_1,\ldots, l_n\}$ where $l_0=2$ and $l_i$ is an odd prime number for $i=1,\ldots, n$. We fix the following notations:

\begin{itemize}
 \item[\rm (i)] For each $0\le i\le n$ let $\mf{l}_i$ denote a fixed prime of $\Q_S(2)$ above $l_i$.
 \item[\rm (ii)] For $1\le i\le n$ let $\hat{l}_i$ denote the idèle of $\Q$ whose $l_i$-component equals $l_i$ and all other components are $1$ and let $\hat{g}_i$ denote the idèle whose $l_i$-component equals $g_i$ for a primitive root $g_i$ modulo $l_i$ and all other components are $1$.
 \item[\rm (iii)] For $i=0$ let $\hat{l}_0,\ \hat{g}_0,\ \hat{g}'_0$ denote the idèles of $\Q$ whose $2$-components are $2,5$ and $-1$ respectively and all other components are $1$.
\end{itemize}

For $0\le i\le n$ we choose an element $\sigma_i\in G_S(2)$ with the following properties:
\begin{itemize}
 \item[\rm (i)] $\sigma_i$ is a lift of the Frobenius automorphism of $\mf{l}_i$ with respect to the maximal subextension of $\Q_S(2)|\Q$ in which $\mf{l}_i$ is unramified;
 \item[\rm (ii)] the restriction of $\sigma_i$ to the maximal abelian subextension $\Q_S(2)^{ab}|\Q$ of $\Q_S(2)|\Q$ equals $(\hat{l}_i,\Q_S(2)^{ab}|\Q)$ where $(\ \cdot\ , \Q_S(2)^{ab}|\Q)$ denotes the global norm residue symbol.
\end{itemize}

For $1\le i\le n$ we denote by $T_{\mf{l}_i}\subseteq G_S(2)$ the inertia subgroup of $\mf{l}_i$ and choose an element $\tau_i\in T_{\mf{l}_i}$, such that
\begin{itemize}
 \item[\rm (i)] $\tau_i$ generates $T_{\mf{l}_i}$;
 \item[\rm (ii)] the restriction of $\tau_i$ to $\Q_S(2)^{ab}|\Q$ equals $(\hat{g}_i,\Q_S(2)^{ab}|\Q)$.
\end{itemize}

Finally, let $\tau_0,\tilde{\tau}_0$ denote two elements of the inertia subgroup $T_{\mf{l}_0}\subseteq G_S(2)$ of $\mf{l}_0$ such that 
\begin{itemize}
 \item[\rm (i)] $\tau_0,\tilde{\tau}_0$ generate $T_{\mf{l}_0}$ as a normal subgroup of the decomposition group $G_{\mf{l}_0}\subseteq G_S(2)$ of $\mf{l}_0$;
 \item[\rm (ii)] the restriction of $\tau_0$ to $\Q_S(2)^{ab}|\Q$ equals $(\hat{g}_{0},\Q_S(2)^{ab}|\Q)$ and the restriction of $\tilde{\tau}_0$ to $\Q_S(2)^{ab}|\Q$ equals $(\hat{g}'_{0},\Q_S(2)^{ab}|\Q)$.
\end{itemize}

The existence of the elements $\sigma_i, \tau_i, \tilde{\tau}_0$ follows by class field theory and the structure result \ref{localgaloisgroups} of local pro-$2$-Galois groups. 

\begin{defi}
For $1\le i,j\le n, \ i\neq j$ we define the \f{linking number} $a_{i,j}\in\F_2$ by
\[
 a_{i,j}=\left\{ \begin{array}{ll} 
1,& \mbox{if}\ \left(\frac{l_i}{l_j}\right)_2=-1 ,\\
0,& \mbox{else}.\\
\end{array}\right.
\]
Furthermore, for $1\le i\le n$ we define the numbers $a_{i,0},\tilde{a}_{i,0}\in\F_2$ by
 \begin{eqnarray*}
 a_{i,0}&=&\left\{ \begin{array}{ll} 
1,& \mbox{if}\ l_i\equiv 3,5\mod 8,\\
0,& \mbox{else}\\
\end{array}\right.\quad and\\ 
 \tilde{a}_{i,0}&=&\left\{ \begin{array}{ll} 
1,& \mbox{if}\ l_i\equiv 3,7\mod 8,\\
0,& \mbox{else}.\\
\end{array}\right.
\end{eqnarray*}
\end{defi}

We can now give the desired description of the group $G_S(2)$ in terms of generators and relations:

\begin{prop}
\label{gspres}
 Let $S=\{l_0,\ldots, l_n\}$ where $l_0=2$ and $l_1,\ldots, l_n$ are odd prime numbers. Furthermore, let $F$ be the free pro-$2$-group on the $n+1$ generators $x_0, \ldots, x_n$. Then $G_S(2)$ admits a minimal presentation
\[
\begin{tikzpicture}[description/.style={fill=white,inner sep=2pt}, bij/.style={below,sloped,inner sep=1.5pt}]
\matrix (m) [matrix of math nodes, row sep=3em,
column sep=2.5em, text height=1.5ex, text depth=0.25ex]
{1 & R & F & G_S(2) & 1\\};
\path[->,font=\scriptsize]
(m-1-1) edge node[auto] {} (m-1-2)
(m-1-2) edge node[auto] {} (m-1-3)
(m-1-3) edge node[auto] {$\pi$} (m-1-4)
(m-1-4) edge node[auto] {} (m-1-5);
\end{tikzpicture}
\]
where $\pi$ maps $x_i$ to $\tau_i$ for $0\le i\le n$. Let $y_i$ denote a preimage of $\sigma_i$ under $\pi$, then a minimal generating set of $R$ as closed normal subgroup of $F$ is given by
\[
 r_i = x_i^{l_i-1}[x_i^{-1}, y_i^{-1}],\ i=1, \ldots, n
\]
and we have
\begin{eqnarray*}
r_i\equiv x_i^{l_i-1} \prod_{\stackrels{0\le j\le n}{j\neq i}} [x_i,x_j]^{a'_{i,j}}\ \mod F_{(3)}
\end{eqnarray*}
where the numbers $a'_{ij}\in\F_2$ are given by
\[
 a'_{i,j}=\left\{\begin{array}{ll} 
a_{i,j} + \tilde{a}_{i,0},& \mbox{if}\ l_j\equiv 3\modulo 4,\\
a_{i,j},& \mbox{else.}\\
\end{array}\right.
\]
\end{prop}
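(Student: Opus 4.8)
The plan is to assemble the presentation from the local data supplied by Lemma~\ref{localgaloisgroups} via the Šafarevič--Koch machinery. First I would recall that since $2\in S$ and $\infty\notin S$ we have $\rusb_S(2)=0$ (equivalently $V_S(2)=1$), so the Šafarevič--Tate group $\Sha_S(2)$ vanishes and the map $H^2(G_S(2),\F_2)\to\prod_{l\in S}H^2(G_l(2),\F_2)$ is injective. By the standard argument (see \cite{NSW}, Ch.~X) this forces $G_S(2)$ to be the quotient of the free pro-$2$-product of the decomposition groups $G_{\mf l_i}$ amalgamated appropriately, and in particular yields a minimal presentation $1\to R\to F\to G_S(2)\to 1$ in which $F$ is free on $n+1$ generators (by Proposition~\ref{gsrank}, $h^1=n+1$) and $R$ is minimally generated by $n$ elements (again by Proposition~\ref{gsrank}, $h^2=n$), each $r_i$ coming from the defining relation of the local group $G_{\mf l_i}$ for $i=1,\dots,n$. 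The prime $l_0=2$ contributes a local relation as well, but its inertia is generated (as a normal subgroup) by \emph{two} elements $\tau_0,\tilde\tau_0$, and the point is that the generator $x_0\mapsto\tau_0$ together with the absence of a forced $l_0$-relation in the global presentation accounts for the arithmetic of $2$ entering only through the coefficients, not through an extra relation.

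Next I would pin down the choice of generators. The prescription is $\pi(x_i)=\tau_i$ for $1\le i\le n$ and $\pi(x_0)=\tau_0$, where the $\tau_i$ are the inertia generators normalized by the norm residue symbols $(\hat g_i,\cdot)$ as fixed above; one checks using the reciprocity law and the fact that $\rusb_S(2)=0$ that these $n+1$ elements are indeed a minimal generating system of $G_S(2)$ (their images in $H_1$ are dual to a basis, because the idèles $\hat g_i$ generate the relevant idèle class group modulo squares and norms). With $y_i$ a lift of the Frobenius $\sigma_i$, the local relation $\tau_i^{l_i-1}[\tau_i^{-1},\sigma_i^{-1}]=1$ of Lemma~\ref{localgaloisgroups}(i) lifts to $r_i=x_i^{l_i-1}[x_i^{-1},y_i^{-1}]\in R$, and these $r_i$, $i=1,\dots,n$, generate $R$ as a closed normal subgroup of $F$ by the Šafarevič--Koch description; their number matches $h^2=n$, so the presentation is minimal.

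The substantive computation is the congruence modulo $F_{(3)}$. Modulo $F_{(3)}$ one has $[x_i^{-1},y_i^{-1}]\equiv[x_i,y_i]$, and writing $y_i\equiv\prod_j x_j^{\,c_{ij}}\pmod{F_{(2)}}$ for suitable $c_{ij}\in\F_2$ gives $[x_i,y_i]\equiv\prod_{j\neq i}[x_i,x_j]^{c_{ij}}\pmod{F_{(3)}}$, so everything reduces to determining the exponents $c_{ij}$, i.e.\ the image of the Frobenius $\sigma_i$ in $G_S(2)/F_{(2)}\cong G_S(2)^{ab}/2$. This is exactly what the global reciprocity law computes: $c_{ij}$ is read off from the component at $l_j$ of the idèle $\hat l_i$ under $(\,\cdot\,,\Q_S(2)^{ab}|\Q)$, and writing the Frobenius in terms of the normalized generators $\tau_j=(\hat g_j,\cdot)$ and $\tilde\tau_0=(\hat g_0',\cdot)$ turns $c_{ij}$ into a combination of discrete logarithms mod $l_j$. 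For odd $l_j$ this gives $c_{ij}=a_{i,j}$ (the Legendre symbol $\bigl(\frac{l_i}{l_j}\bigr)_2$, via quadratic reciprocity comparing $\bigl(\frac{l_i}{l_j}\bigr)$ with the local norm residue symbol at $l_j$), and the extra term $\tilde a_{i,0}$ when $l_j\equiv 3\bmod 4$ arises because $-1$ is a non-norm at such $l_j$, so expressing the Frobenius at $l_i$ through the chosen generators at $l_0=2$ picks up the contribution of $\tilde\tau_0$ governed by whether $l_i\equiv 3,7\bmod 8$ — precisely the definition of $\tilde a_{i,0}$. Collecting these exponents yields $a'_{i,j}=a_{i,j}+\tilde a_{i,0}$ for $l_j\equiv 3\bmod 4$ and $a'_{i,j}=a_{i,j}$ otherwise, which is the claimed formula.

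The main obstacle is the careful bookkeeping in this last step: one must track the interaction between local and global norm residue symbols at each prime of $S$ simultaneously (the global symbol is a product of local ones, constrained by the product formula), correctly incorporate the sign subtlety at $l=2$ flagged after Lemma~\ref{localgaloisgroups} — namely that $(\alpha_1,\pi)=1$ rather than $-1$ for the basis $\{2,5,-1\}$ of $\Q_2^\times/\Q_2^{\times2}$ — and verify that the normalizations of $\sigma_i$, $\tau_i$, $\tilde\tau_0$ by the idèles $\hat l_i,\hat g_i,\hat g_0'$ are mutually consistent so that no spurious cross-terms appear. Everything else (the shape of the presentation, the count of generators and relations) is a formal consequence of $\rusb_S(2)=0$ together with Proposition~\ref{gsrank} and Lemma~\ref{localgaloisgroups}.
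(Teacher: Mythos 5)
Your proposal follows essentially the same route as the paper's proof: both derive the presentation from the Šafarevič--Koch theorem using $\rusb_S(2)=\Sha_S(2)=0$, keep the local relations at $l_1,\dots,l_n$ while discarding the redundant relation at $2$, and compute $y_i$ modulo $F_{(2)}$ via global reciprocity, with the correction term $\tilde a_{i,0}$ for $l_j\equiv 3\bmod 4$ arising exactly as in the paper from re-expressing $\tilde\tau_0$ as the product of the $\tau_j$ with $l_j\equiv 3\bmod 4$. The argument is correct at the same level of detail as the paper's own proof.
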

\begin{proof}
This is a special case of \cite{HK}, Th.11.10. In fact, first note that since $2\in S$, we have $\Sha_S(2)=\rusb_S(2)=0$. By class field theory and \ref{gsrank} it follows that the set $\{\tau_0, \tau_1,\ldots, \tau_n\}$ is a minimal system of generators of $G_S(2)$. A system of defining relations is given by the local relations
\begin{eqnarray*}
 r_0&=& \tilde{x}_0^2 [x_0, y_0] r_0',\\
 r_i&=&x_i^{l_i-1}[x_i^{-1}, y_i^{-1}],\ i=1, \ldots, n
\end{eqnarray*}
for some $r_0'\in F_{(2)}$ where $\tilde{x}_0$ denotes a preimage of $\tilde{\tau}_0$ under $\pi$, cf.\ \ref{localgaloisgroups}. Any of these relations may be omitted and we decide to ignore $r_0$. According to the choices we have made and by definition of the linking numbers $a_{i,j}$ the elements $y_i$ satisfy
\[
 y_i\equiv x_0^{a_{i,0}} \tilde{x}_0^{\tilde{a}_{i,0}}\prod_{\stackrels{1\le j\le n}{j\neq i}} x_j^{a_{i,j}}\ \mod F_{(2)}.
\]
Class field theory implies
\[
 \tilde{x}_0 \equiv \prod_{\stackrels{1\le j\le n,}{l_j\equiv 3\modulo 4}} x_j \ \mod F_{(2)}
\]
which finishes the proof.
\end{proof}

\begin{coro}
\label{cupproductcoro}
 Let the set $S$ be given as in \ref{gspres}. For the Zassenhaus invariant of $G_S(2)$ we have $\z(G_S(2))\ge 3$ if and only if $l_i\equiv 1\modulo 8,\ i=1,\ldots, n$ and the Legendre symbols satisfy
\[
 \left(\frac{l_i}{l_j}\right)_2=1,\ 1\le i,j\le n,\ i\neq j.
\]
\end{coro}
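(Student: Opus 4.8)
The key point is that $\z(G_S(2))\ge 3$ is equivalent, by Definition~\ref{zassinvdefi}, to $R\subseteq F_{(3)}$, i.e.\ to the vanishing of the initial forms $\rho_i$ of the relations $r_i$ in degree $2$. So I would simply read off the degree-$2$ part of the congruences in Proposition~\ref{gspres} and determine exactly when it vanishes for every $i$. Writing $r_i\equiv x_i^{l_i-1}\prod_{j\neq i}[x_i,x_j]^{a'_{i,j}}\bmod F_{(3)}$, the relation $r_i$ lies in $F_{(3)}$ if and only if (a) the exponent $l_i-1$ is divisible by $4$ (since $x_i^{2}\in F_{(2)}\setminus F_{(3)}$ for $p=2$, the term $x_i^{l_i-1}$ contributes a nonzero degree-$2$ term precisely when $l_i-1\equiv 2\bmod 4$, i.e.\ $l_i\equiv 3\bmod 4$), and (b) all the exponents $a'_{i,j}$ vanish in $\F_2$.

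\textbf{Step 1: translate via the Magnus embedding.} Using $\psi(x_i)=X_i$, one has $\psi(x_i^{2})=2X_i+X_i^{2}=X_i^{2}$ in characteristic $2$, while $\psi([x_i,x_j])=X_iX_j-X_jX_i+(\text{higher order})$. Hence the degree-$2$ initial form of $r_i$ is $\eps_{(i,i)}(l_i-1)X_i^{2}+\sum_{j\neq i}a'_{i,j}(X_iX_j+X_jX_i)$ where $\eps_{(i,i)}(l_i-1)\in\F_2$ is $1$ iff $l_i\equiv 3\bmod 4$. So $\rho_i=0$ (equivalently $r_i\in F_{(3)}$) for all $i$ iff $l_i\equiv 1\bmod 4$ for all $i$ and $a'_{i,j}=0$ for all $i\neq j$.

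\textbf{Step 2: unwind the condition $a'_{i,j}=0$.} Once all $l_i\equiv 1\bmod 4$, the case distinction in the definition of $a'_{i,j}$ collapses: for $j\ge 1$ we are always in the ``else'' branch ($l_j\equiv 1\bmod 4$), so $a'_{i,j}=a_{i,j}$; and for $j=0$ we have $a'_{i,0}=a_{i,0}$ by definition. Moreover when $l_i\equiv 1\bmod 4$ one computes from the definitions that $a_{i,0}=1$ iff $l_i\equiv 5\bmod 8$ and $\tilde a_{i,0}=0$; so $a'_{i,0}=a_{i,0}=0$ iff $l_i\equiv 1\bmod 8$. Thus the full system of conditions becomes: $l_i\equiv 1\bmod 8$ for all $i=1,\dots,n$, and $a_{i,j}=0$ for all $1\le i\neq j\le n$, the latter meaning exactly $\left(\frac{l_i}{l_j}\right)_2=1$. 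This is the asserted statement. (Conversely, one checks trivially that these conditions force every degree-$2$ form to vanish, so the equivalence holds in both directions.)

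\textbf{Main obstacle.} There is no serious obstacle here; the content is entirely in Proposition~\ref{gspres}, and the corollary is a bookkeeping exercise. The only point requiring care is the characteristic-$2$ phenomenon that the \emph{square} $x_i^{2}$ already contributes in degree $2$ of the Zassenhaus filtration, so that the exponent $l_i-1$ must be killed mod $4$ rather than mod $2$ — this is why the condition is $l_i\equiv 1\bmod 8$ (from both $a_{i,0}=0$ \emph{and} $l_i\equiv 1\bmod 4$, the latter forced by the square term) rather than merely $l_i\equiv 1\bmod 4$. One should also note that the choice to discard the relation $r_0$ (made in Proposition~\ref{gspres}) is harmless for this computation, since $r_0$ is not among the minimal relations $r_1,\dots,r_n$ being analyzed.
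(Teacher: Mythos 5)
Your proposal is correct and follows essentially the same route as the paper: both reduce $\z(G_S(2))\ge 3$ to $r_i\in F_{(3)}$ for all $i$, split this into $x_i^{l_i-1}\in F_{(3)}$ (equivalent to $l_i\equiv 1\bmod 4$) plus $a'_{i,j}=0$, and then unwind the definition of the $a'_{i,j}$ to get $l_i\equiv 1\bmod 8$ and trivial Legendre symbols. Your write-up is merely more explicit about the Magnus-expansion bookkeeping that the paper leaves implicit.
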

\begin{proof}
 Keeping the notation of \ref{gspres}, the cup-product vanishes if and only if $R\subseteq F_{(3)}$, i.e.\ $r_i\in F_{(3)}$ for $1\le i\le n$. The latter is equivalent to $x_i^{l_i-1} \in F_{(3)}$ and $a'_{i,j}=0$ for $1\le i\le n,\ 0\le j\le n,\ i\neq j$. Noting that for $1\le i\le n$ we have $x_i^{l_i-1}\in F_{(3)}$ if and only if $l_i\equiv 1\modulo 4$, the claim follows immediately from the definition of the numbers $a'_{i,j}$.
\end{proof}

\section{Rédei symbols}

We fix a set 
\[
 S=\{2,l_1,\ldots, l_n\}
\]
where $l_i\equiv 1\modulo 8,\ i=1,\ldots, n$ are pairwise distinct prime numbers with Legendre symbols $\left(\frac{l_i}{l_j}\right)=1,\ 1\le i,j\le n,\ i\neq j$.

By \ref{cupproductcoro}, there exists a uniquely defined triple Massey product
\[
 H^1(G_S(2))\times H^1(G_S(2))\times  H^1(G_S(2))\longrightarrow H^2(G_S(2))
\]
determining the relation structure of $G_S(2)$ modulo the fourth step of the Zassenhaus filtration. Under some further conditions on the primes in $S$, we will give an explicit description of this product in terms of certain arithmetical symbols called \f{Rédei symbols}. These symbols have first been introduced in \cite{LR} in order to study diophantine equations.
\vspace{10pt}

For the definition of the Rédei symbol $[\cdot,\cdot,\cdot]$, we need the following notation:

\begin{defi}
 Let $k$ be a number field, $\alpha\in k^\times\setminus k^{\times 2}$ and let $\mf{p}$ be a prime ideal of $k$. Then we set
 \[
 \left(\frac{\alpha|k}{\mathfrak{p}}\right)=\left\{ \begin{array}{ll}
\phantom{-}1,& \mbox{if}\ \mathfrak{p}\ \mbox{splits in}\ k(\sqrt{\alpha}),\\
\phantom{-}0,& \mbox{if}\ \mathfrak{p}\ \mbox{is ramified in}\ k(\sqrt{\alpha}),\\
-1,& \mbox{if}\ \mathfrak{p}\ \mbox{is inert in}\ k(\sqrt{\alpha}).\\
\end{array}\right.
\]
\end{defi}

\begin{prop}
\label{normproposition}
Let $0\le i,j\le n$ such that $(i,j)\neq (0,0)$. Let $k_1$ denote the real quadratic number field $k_1=\Q(\sqrt{l_i})$. Then there exists an element $\alpha\in k_1$ satisfying the following properties:
\begin{itemize}
 \item[(1)] $N_{k_1|\Q}(\alpha) = l_j z^2$ for some $z\in\Z$ where $N_{k_1|\Q}(\cdot)$ denotes the norm,
 \item[\rm (2)] $N_{k_1|\Q}(D_{k_{12}|k_1})= l_j$ if $j\neq 0$ or $N_{k_1|\Q}(D_{k_{12}|k_1})= 8$ if $j=0$  where $k_{12}=k_1(\sqrt{\alpha})$ and $D_{k_{12}|k_1}$ denotes the discriminant of the extension $k_{12}|k_1$.
\end{itemize}
In addition, let $0\le k\le n$ such that $(l_i,l_j,l_k)=1$. Then there exists a prime ideal $\mf{p}$ in $k_1$ above $l_k$ which is unramified in $k_{12}$. For all such choices of $\alpha$ and $\mf{p}$, the symbol $\left(\frac{\alpha|k_1}{\mathfrak{p}}\right)$ yields the same (non-zero) value.
\end{prop}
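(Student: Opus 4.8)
The plan is to prove the existence statement first, then the well-definedness (independence of the choices of $\alpha$ and $\mathfrak{p}$), treating the two halves separately since they rely on different inputs. For the existence of $\alpha$: since $l_i \equiv 1 \bmod 8$ (and $l_i \equiv 1 \bmod 4$ suffices for the fundamental unit to have norm $-1$ in many cases, but here the hypothesis $\left(\frac{l_i}{l_j}\right)=1$ is what matters), the prime $l_j$ splits in $k_1=\Q(\sqrt{l_i})$ when $j \neq 0$; writing $l_j\mathcal{O}_{k_1} = \mathfrak{q}\bar{\mathfrak{q}}$, I would check that $\mathfrak{q}$ is principal modulo squares in the narrow class group by appealing to the triviality of the relevant $2$-part of the narrow class group of $k_1$. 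Concretely, since $l_i \equiv 1 \bmod 8$, the genus theory of $\Q(\sqrt{l_i})$ gives that the $2$-rank of the narrow class group is $0$, hence every ideal is principal and one can pick a generator $\alpha$ of $\mathfrak{q}$ (times a suitable rational square) so that $N_{k_1|\Q}(\alpha) = l_j z^2$. Adjusting $\alpha$ by units and rational squares, and using that the fundamental unit of $\Q(\sqrt{l_i})$ has norm $-1$ (true since $l_i\equiv 1\bmod 4$ is prime), one arranges the sign so that $\alpha \gg 0$ or the required norm sign holds; then condition (2) on the discriminant $D_{k_{12}|k_1}$ is obtained by multiplying $\alpha$ by an appropriate square to remove extraneous ramification, using that $k_{12}=k_1(\sqrt{\alpha})$ ramifies exactly at the primes dividing $\alpha$ and at $2$. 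The case $j=0$ is handled analogously with $l_j$ replaced by $2$ and the discriminant contribution $8$ coming from $\Q(\sqrt{2})$ or $\Q(\sqrt{-1})$.

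For the existence of the prime ideal $\mathfrak{p}$ above $l_k$ unramified in $k_{12}$: the hypothesis $(l_i,l_j,l_k)=1$ — meaning the relevant Rédei-type triviality condition, equivalently that $l_k$ splits in $k_1$ and the chosen prime above it splits further, or at least stays unramified, in $k_{12}$ — guarantees that among the primes of $k_1$ above $l_k$ at least one avoids the ramification locus of $k_{12}|k_1$, which by (2) consists only of primes above $l_j$ (resp.\ above $2$) together with possibly the archimedean places. Since $l_k \neq l_j$ and $l_k$ is odd, no prime above $l_k$ divides $D_{k_{12}|k_1}$, so \emph{every} prime above $l_k$ is unramified in $k_{12}$; the content of the hypothesis $(l_i,l_j,l_k)=1$ is really needed only for the final well-definedness claim, ensuring the symbol value is nonzero.

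For the independence of $\mathfrak{p}$: two primes $\mathfrak{p}, \mathfrak{p}'$ of $k_1$ above $l_k$ are conjugate under $\Gal(k_1|\Q)$, and since $k_{12}|\Q$ is Galois (or at least $\alpha$ can be chosen so that $k_{12}|\Q$ is Galois with group $(\Z/2)^2$ — this requires $k_1|\Q$ and $k_{12}|k_1$ to be "compatible," which is exactly the Rédei governing-field condition), the splitting behaviour of $\mathfrak{p}$ in $k_{12}$ agrees with that of $\mathfrak{p}'$; hence $\left(\frac{\alpha|k_1}{\mathfrak{p}}\right) = \left(\frac{\alpha|k_1}{\mathfrak{p}'}\right)$. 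For the independence of $\alpha$: if $\alpha'$ is another choice, then $\alpha'/\alpha$ has square norm and generates (up to squares) the same extension data, so $k_1(\sqrt{\alpha'}) = k_1(\sqrt{\alpha})$ unless $\alpha'/\alpha$ differs by a norm from $k_1$; in that case one shows $k_1(\sqrt{\alpha'})$ and $k_1(\sqrt{\alpha})$ have the same completion at $\mathfrak{p}$, so the local splitting, hence the symbol, is unchanged. The nonvanishing of the symbol follows because $\mathfrak{p}$ is unramified in $k_{12}$ by the previous paragraph, so the value is $\pm 1$, never $0$.

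The main obstacle I expect is the well-definedness modulo the choice of $\alpha$: one must pin down precisely how much freedom there is in $\alpha$ (units, rational squares, and norms of elements/ideals from $k_1$) and verify that each permitted modification leaves the $\mathfrak{p}$-adic splitting type of $k_1(\sqrt{\alpha})$ invariant. This is where the arithmetic of the Rédei symbol is genuinely subtle, and it will likely require a careful application of the product formula for Hilbert symbols together with the condition $(l_i,l_j,l_k)=1$ to control the local contributions at $l_j$ (resp.\ $2$) and at the archimedean places. The existence parts, by contrast, are fairly routine genus theory.
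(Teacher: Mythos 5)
Your proposal takes a genuinely different route from the paper (which obtains the statement as a special case of Rédei's Satz 1, constructing $\alpha=x+y\sqrt{l_i}$ from a primitive solution of $x^2-l_iy^2-l_jz^2=0$ normalized so that $x+y\sqrt{l_i}\equiv 1\bmod 4\mathcal{O}_{k_1}$), but as written it has several genuine gaps. The most serious concerns condition (2): you propose to ``remove extraneous ramification'' by ``multiplying $\alpha$ by an appropriate square'', but multiplying $\alpha$ by a square of $k_1^\times$ does not change the field $k_{12}=k_1(\sqrt{\alpha})$ at all, so it cannot alter $D_{k_{12}|k_1}$. The substantive point is to produce a generator lying in a square class congruent to a square modulo $4\mathcal{O}_{k_1}$, so that the primes above $2$ stay unramified; this is exactly the normalization Rédei's construction supplies and your argument does not. (Relatedly, the vanishing of the $2$-rank of the narrow class group gives odd class number, not that every ideal is principal; that step is repairable by passing to an odd power of $\mathfrak{q}$, but as stated it is a non sequitur.)

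Two further points. You have misread the hypothesis $(l_i,l_j,l_k)=1$: it is the gcd condition, excluding only $i=j=k$. The case $k=j$ is allowed and is actually used (the proof of Theorem \ref{gsmassey} needs $[l_i,l_j,l_j]$); there your claim that \emph{every} prime above $l_k$ is unramified in $k_{12}$ fails, since by (2) exactly one of the two primes of $k_1$ above $l_j$ divides $D_{k_{12}|k_1}$, and the content of the existence claim is that the other one survives (when $i=j=k$ the unique prime above $l_k$ is ramified, which is why the hypothesis is needed for existence, not only for well-definedness). Finally, the independence of $\mathfrak{p}$ cannot be argued by asserting that $k_{12}|\Q$ is, or can be chosen to be, Galois: by Proposition \ref{discriminantprop} its Galois closure $K_{i,j}$ is dihedral of order $8$, so $k_{12}|\Q$ is never Galois for $l_i\neq l_j$. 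The correct argument conjugates, using that $\Gal(k_1|\Q)$ swaps $\mathfrak{p}$ and $\bar{\mathfrak{p}}$ and sends $\alpha$ to $\bar{\alpha}=l_jz^2/\alpha$, and then invokes $\left(\frac{l_j}{l_k}\right)=1$ to compare the two symbols; the independence of the choice of $\alpha$, which you rightly flag as the delicate point, is precisely where Rédei's reciprocity computation in $k_1$ enters and is left open in your proposal.
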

\begin{proof}
 This is a special case of \cite{LR}, Satz 1. In fact, first suppose $j\neq 0$. Using the assumptions we made for the primes $l_i$, the classical Hasse-Minkowski theorem implies the existence of integers $x,y,z\in\Z$ satisfying the equation
\[
 x^2-l_iy^2-l_jz^2 = 0,
\]
such that $(x,y,z)=1,\ x+y\sqrt{l_i}\equiv 1\mod 4\mc{O}_{k_1}$ where $\mc{O}_{k_1}$ denotes the ring of integers of $k_1=\Q(\sqrt{l_i})$. Then $\alpha= x+y\sqrt{l_i}\in k_1$ satisfies the conditions $(1)$ and $(2)$ (whereas the first condition is obvious, the second one requires a thorough examination of the discriminant for which we refer the reader to \cite{LR}). Now assume that $i\neq 0$ and $\alpha'\in k_2=\Q(\sqrt{l_j})$ satisfies conditions (1) and (2) after replacing $l_j$ by $l_i$. Then again by \cite{LR} the element
\[
\alpha= Tr_{k_2|\Q} (\alpha') + 2\sqrt{N_{k_2|\Q}(\alpha')}=(\sqrt{\alpha'} + \sqrt{\overline{\alpha'}})^2\in k_1
\]
satisfies conditions (1) and (2). For the second part of the proposition we again refer to \cite{LR}, Satz 1.
\end{proof}

\begin{defi}
\label{Redeidefi}
Let $0\le i,j, k\le n$ such that $(i,j)\neq (0,0)$. Keeping the notations and assumptions of \ref{normproposition}, the \f{Rédei symbol} $[l_i,l_j,l_k]\in\{\pm1\}$ is defined by
\[
 [l_i, l_j, l_k] = \left(\frac{\alpha|k_1}{\mf{p}}\right).
\]
Furthermore, we set
 \[
 [l_0,l_0,l_k] = \left\{\begin{array}{ll} 
\ 1,& \mbox{if}\ l_k\equiv 1\mod 16,\\  
\ -1 ,& \mbox{if}\ l_k\equiv 9\mod 16.
\end{array}\right.
\]
\end{defi}

If $(i,j)\neq (0,0)$, by definition the Rédei symbol $[l_i,l_j,l_k]$ describes the decomposition behavior of the prime $l_k$ in the field $\Q(\sqrt{l_i})(\sqrt{\alpha})$. If $i=j=0$, it describes the decomposition behavior of $l_k$ in the maximal real subfield of the cyclotomic field $\Q(\zeta_{16})$. In the original paper \cite{LR}, Rédei proves the remarkable fact that the symbol $[\cdot,\cdot,\cdot]$ is symmetric: For any permutation $\gamma$ of the indices $\{i,j,k\}$ it holds that
\[
 [l_i,l_j,l_k]=[l_{\gamma(i)},l_{\gamma(j)}, l_{\gamma(k)}].
\]
Let $K_{i,j}$ denote the Galois closure of $\Q(\sqrt{l_i})(\sqrt{\alpha})|\Q$. It is also the Galois closure of the field $\Q(\sqrt{l_j})(\sqrt{\alpha'})$ where $\alpha'$ is given as in $\ref{normproposition}$ after exchanging $l_i$ and $l_j$. Furthermore, $\Q(\sqrt{l_i},\sqrt{l_j})\subseteq K_{i,j}$. For the structure of the Galois group $G(K_{i,j}|\Q)$ we have the following result proven in \cite{LR}:

\begin{prop}
\label{discriminantprop}
Keeping the notations and assumptions of \ref{normproposition}, the following holds for $(i,j)\neq (0,0)$:
\begin{itemize}
 \item[\rm (i)] If $l_i\neq l_j$, then $G(K_{i,j}|\Q)$ is the dihedral group of order $8$. Let $s,t$ be generators of $G(K_{i,j}|\Q(\sqrt{l_i})(\sqrt{\alpha}))$ and $G(K_{i,j}|\Q(\sqrt{l_i})(\sqrt{\alpha'}))$ respectively, i.e.
\[
 s: \sqrt{\alpha}\mapsto -\sqrt{\alpha},\quad t: \sqrt{\alpha'}\mapsto -\sqrt{\alpha'}
\]
where $\alpha'$ is given as in $\ref{normproposition}$ after exchanging $l_i$ and $l_j$. Then $G(K_{i,j}|\Q)$ admits the presentation
\[
 G(K_{i,j}|\Q)=\langle s,t\ |\ s^2=t^2=(st)^4 = 1\rangle.
\]
 If $l_i=l_j$, then $K_{i,j}$ is a cyclic extension of degree $4$ over $\Q$.
 \item[\rm (ii)] The discriminant of $K_{a_1,a_2}$ is given by 
\[
 D_{K_{i,j}|\Q}= \left\{ \begin{array}{ll} 
\ol{l_i}^4 \ol{l_j}^4,& \mbox{if}\ l_i\neq l_j,\\
l_i^3,& \mbox{if}\ l_i=l_j
\end{array}\right.
\]
where we have set $\ol{l_i}=l_i$ for $i\neq 0$ and $\ol{l_0} = 8$. In particular, $K_{i,j}|\Q$ is unramified outside the set $\{l_i,l_j,\infty\}$.
\end{itemize}
\end{prop}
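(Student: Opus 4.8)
The plan is to realise $K_{i,j}$ explicitly as $\Q(\sqrt{l_i},\sqrt{l_j},\sqrt\alpha)$ and then to read off the Galois group from its degree together with the fact that the degree-$4$ subextension $k_{12}=k_1(\sqrt\alpha)$ fails, in general, to be normal over $\Q$. Write $\ol\alpha$ for the $\Q$-conjugate of $\alpha$. By condition $(1)$ of \ref{normproposition} we have $\alpha\ol\alpha=N_{k_1|\Q}(\alpha)=l_jz^2$, hence $\sqrt\alpha\,\sqrt{\ol\alpha}=\pm z\sqrt{l_j}$; so $\sqrt{\ol\alpha}\in k_1(\sqrt\alpha,\sqrt{l_j})$ and conversely $\sqrt{l_j}\in k_1(\sqrt\alpha,\sqrt{\ol\alpha})$. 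Since $K_{i,j}$ is the compositum of $k_{12}$ with its $\Q$-conjugate $k_1(\sqrt{\ol\alpha})$, this gives
\[
K_{i,j}=\Q(\sqrt{l_i},\sqrt{l_j},\sqrt\alpha),
\]
which is visibly Galois over $\Q$ and contains $\Q(\sqrt{l_i},\sqrt{l_j})$.

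Assume first $l_i\neq l_j$. Condition $(1)$ forces $\alpha\not\equiv l_j\bmod k_1^{\times2}$, for otherwise $N_{k_1|\Q}(\alpha)$ would be a rational square; equivalently $\sqrt\alpha\notin k_1(\sqrt{l_j})=\Q(\sqrt{l_i},\sqrt{l_j})$, so $[K_{i,j}:\Q]=8$, and likewise $\sqrt{l_j}=\pm\sqrt\alpha\,\sqrt{\ol\alpha}/z\notin k_{12}$, so $k_{12}|\Q$ is not Galois. Thus $\Gal(K_{i,j}|\Q)$ is a group of order $8$ containing a non-normal subgroup of order $2$ (the one fixing $k_{12}$), and the dihedral group is the only group of order $8$ with such a subgroup; this is the first assertion of $(i)$. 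For the presentation, let $s$ and $t$ be the non-trivial elements of $\Gal(K_{i,j}|k_{12})$ and $\Gal(K_{i,j}|k_{21})$, where $k_{21}=\Q(\sqrt{l_j})(\sqrt{\alpha'})$ with $\alpha'$ as in \ref{normproposition} after exchanging $l_i$ and $l_j$; these are involutions. In the dihedral group of order $8$ every non-normal subgroup of order $2$ lies in a unique subgroup of order $4$, so $k_{12}$ and $k_{21}$ each have a unique quadratic subfield, namely $\Q(\sqrt{l_i})$ and $\Q(\sqrt{l_j})$; hence the fixed field $k_{12}\cap k_{21}$ of $\langle s,t\rangle$ is $\Q$, i.e.\ $s$ and $t$ generate $\Gal(K_{i,j}|\Q)$. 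Two generating involutions of a group of order $8$ necessarily have product of order $4$, and $\langle s,t\mid s^2=t^2=(st)^4=1\rangle$ is a presentation of the dihedral group of order $8$, which completes $(i)$. If $l_i=l_j$ then $\sqrt{l_i}\in k_1$ already, hence $\sqrt{\ol\alpha}\in k_{12}$ and $K_{i,j}=k_{12}=k_1(\sqrt\alpha)$ is Galois over $\Q$ of degree $4$; the same square-class argument (if $k_{12}$ contained a second quadratic subfield $\Q(\sqrt d)$ then $\alpha\equiv d\bmod k_1^{\times2}$ and $N_{k_1|\Q}(\alpha)$ would be a rational square) rules out $\Gal(k_{12}|\Q)\cong(\Z/2\Z)^2$, so $\Gal(k_{12}|\Q)\cong\Z/4\Z$.

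For $(ii)$ with $l_i\neq l_j$ I would apply the conductor--discriminant formula to $\Gal(K_{i,j}|\Q)$, the dihedral group of order $8$. Its three non-trivial linear characters factor through $\Gal(\Q(\sqrt{l_i},\sqrt{l_j})|\Q)$ and cut out $\Q(\sqrt{l_i}),\Q(\sqrt{l_j}),\Q(\sqrt{l_il_j})$, with conductors $(\ol{l_i}),(\ol{l_j}),(\ol{l_i}\,\ol{l_j})$ (using $l_i,l_j\equiv1\bmod4$, with $\ol{l_0}=8$ when one index is $0$). The two-dimensional irreducible representation is induced from the quadratic character of $\Gal(K_{i,j}|k_1)$ with kernel $\Gal(K_{i,j}|k_{12})$, i.e.\ from the character cutting out $k_{12}|k_1$; by the behaviour of the Artin conductor under induction its conductor is $\mf d_{k_1|\Q}\cdot N_{k_1|\Q}(\mf d_{k_{12}|k_1})=(\ol{l_i})\cdot(\ol{l_j})$, the last equality being condition $(2)$ of \ref{normproposition}. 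Multiplying all contributions yields $\mf d_{K_{i,j}|\Q}=\ol{l_i}^{\,2}\ol{l_j}^{\,2}\cdot(\ol{l_i}\,\ol{l_j})^{2}=\ol{l_i}^{\,4}\ol{l_j}^{\,4}$. When $l_i=l_j$ (so $i=j\neq0$ and $l_i$ is odd), $K_{i,j}=k_{12}$ and the tower formula for discriminants gives directly $\mf d_{K_{i,j}|\Q}=N_{k_1|\Q}(\mf d_{k_{12}|k_1})\cdot\mf d_{k_1|\Q}^{\,2}=(l_i)\cdot(l_i)^{2}=(l_i^{3})$, using condition $(2)$ of \ref{normproposition} once more. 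That $K_{i,j}|\Q$ is unramified outside $\{l_i,l_j,\infty\}$ is then immediate.

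The step I expect to be the real obstacle is the conductor of the two-dimensional representation in $(ii)$ --- equivalently, pinning down the ramification of $l_j$ along the tower $\Q\subseteq k_1\subseteq k_{12}\subseteq K_{i,j}$ --- since this is exactly what condition $(2)$ of \ref{normproposition} encodes, and that condition rests on Rédei's delicate discriminant computation in \cite{LR}, which I would cite rather than reprove. Everything else --- the degree count, the recognition of the Galois group, and the dihedral presentation --- is routine once the shape $K_{i,j}=\Q(\sqrt{l_i},\sqrt{l_j},\sqrt\alpha)$ has been established.
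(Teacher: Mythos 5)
Your proof is correct, but the comparison here is necessarily lopsided: the paper gives \emph{no} proof of this proposition at all --- it is introduced with ``the following result proven in \cite{LR}'' and simply quoted from Rédei. You have therefore supplied an argument where the paper supplies a citation. Your route is a clean modern repackaging: the identity $\alpha\ol{\alpha}=l_jz^2$ identifies the Galois closure as $K_{i,j}=\Q(\sqrt{l_i},\sqrt{l_j},\sqrt{\alpha})$; the square-class observation that $\alpha\equiv 1$ or $\alpha\equiv l_j \bmod k_1^{\times 2}$ would force $N_{k_1|\Q}(\alpha)$ to be a rational square simultaneously gives the degree count and the non-normality of $k_{12}|\Q$; $D_4$ is indeed the only group of order $8$ with a non-normal subgroup of order $2$, and the fixed-field argument for $\langle s,t\rangle=\Gal(K_{i,j}|\Q)$ together with the Coxeter presentation is fine. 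For (ii) the conductor--discriminant computation (three quadratic characters with conductors $\ol{l_i},\ol{l_j},\ol{l_i}\,\ol{l_j}$, plus the two-dimensional representation induced from the character cutting out $k_{12}|k_1$ with conductor $\mf{d}_{k_1|\Q}\cdot N_{k_1|\Q}(\mf{d}_{k_{12}|k_1})=\ol{l_i}\,\ol{l_j}$) gives exactly $\ol{l_i}^{\,4}\ol{l_j}^{\,4}$, and the tower formula handles the cyclic case. You have also correctly isolated the one irreducibly arithmetic input, namely condition (2) of \ref{normproposition}, which is Rédei's delicate relative-discriminant computation and is legitimately cited rather than reproved. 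What your approach buys is transparency: every ramification statement is reduced to that single normed-discriminant identity, whereas Rédei's original treatment works through explicit congruences on $x,y,z$.

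One point you should make explicit: for $\Gal(K_{i,j}|k_{21})$ to be meaningful you need $k_{21}=\Q(\sqrt{l_j})(\sqrt{\alpha'})\subseteq K_{i,j}$, i.e.\ $K_{i,j}=K_{j,i}$. This is not formal for an arbitrary independent choice of $\alpha'$ satisfying (1) and (2) with the roles of $l_i,l_j$ exchanged; it does hold for the compatible choice made in the proof of \ref{normproposition}, where $\alpha=(\sqrt{\alpha'}+\sqrt{\ol{\alpha'}})^2$, so that $\sqrt{\alpha}=\pm(\sqrt{\alpha'}+\sqrt{\ol{\alpha'}})\in K_{j,i}$ and the two degree-$8$ fields coincide. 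With that sentence added, the argument is complete.
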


\begin{defi}
Keeping the notation of \ref{normproposition}, we say that the pair $(l_i,l_j)$ is \f{totally real} if $i=j=0$ or the element $\alpha$ can be chosen in such a way that $\Q(\sqrt{l_i})(\sqrt{\alpha})$ and hence $K_{i,j}$ is totally real.
\end{defi}

By \ref{discriminantprop}, we have $K_{i,j}\subseteq \Q_S(2)$ if and only if $(l_i,l_j)$ is totally real. We are thus led to the question for sufficient and necessary conditions for this property. First observe that $\Q(\sqrt{l_i})(\sqrt{\alpha})$ is totally real if and only if the Diophantine equation $x^2-l_iy^2-l_jz^2 = 0$ in the proof of \ref{normproposition} admits a solution $(x,y,z)$ with $x>0$. We have the following

\begin{prop}\quad
 \label{totallyrealproposition}
\begin{itemize}
 \item[\rm (i)]
For $1\le i\le n$, the pair $(l_i,l_i)$ is totally real. 
 \item[\rm (ii)] 
For $1\le i\le j\le n,\ i\neq j$ the field $K_{i,j}$ is the unique Galois extension of degree $4$ of the quadratic field $\Q(\sqrt{l_il_j})$ unramified outside the infinite primes. In particular, it is independent of the choice of the element $\alpha$ in \ref{normproposition}. 
 \item[\rm (iii)] For $1\le i\le j\le n,\ i\neq j$ the following assertions are equivalent:
\begin{itemize}
 \item[\rm (1)] The pair $(l_i,l_j)$ is totally real.
 \item[\rm (2)] The class number of $\Q(\sqrt{l_il_j})$ is divisible by 4.
 \item[\rm (3)] The class number of $\Q(\sqrt{l_i},\sqrt{l_j})$ is even.
 \item[\rm (4)] For the fourth power residue symbol $\left(\frac{\cdot}{\cdot}\right)_4$ it holds that
\[
 \left(\frac{l_i}{l_j}\right)_4 = \left(\frac{l_j}{l_i}\right)_4.
\]
\end{itemize}
\end{itemize}
\end{prop}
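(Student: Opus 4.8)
Throughout write $F=\Q(\sqrt{l_il_j})$ and $M=\Q(\sqrt{l_i},\sqrt{l_j})$, and recall from \ref{discriminantprop} that for $l_i\neq l_j$ one has $\Gal(K_{i,j}|\Q)\cong D_4$ (dihedral of order $8$), $D_{K_{i,j}|\Q}=l_i^4l_j^4$ and $M\subseteq K_{i,j}$, whereas for $l_i=l_j$ the field $K_{i,i}$ is cyclic of degree $4$ over $\Q$ with $D_{K_{i,i}|\Q}=l_i^3$. \textbf{For (i)}, since $K_{i,i}$ is cyclic quartic over $\Q$ with discriminant $l_i^3$ it is ramified only at $l_i$; writing $\psi$ for the associated quartic Dirichlet character, the conductor--discriminant formula together with the fact that $\psi^2$ cuts out the quadratic subfield $\Q(\sqrt{l_i})\subseteq K_{i,i}$ (conductor $l_i$, as $l_i\equiv1\bmod4$) gives $l_i^3=\mathfrak{f}(\psi)^2 l_i$, so $\mathfrak{f}(\psi)=l_i$ and $K_{i,i}$ is the unique cyclic quartic subfield of $\Q(\zeta_{l_i})$. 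Because $l_i\equiv1\bmod8$, the order-$2$ subgroup of $\Gal(\Q(\zeta_{l_i})|\Q)\cong\Z/(l_i-1)\Z$ lies in the subgroup of index $4$, i.e.\ complex conjugation acts trivially on $K_{i,i}$, so $K_{i,i}\subseteq\Q(\zeta_{l_i})^+$ is totally real and $(l_i,l_i)$ is totally real.

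\textbf{For (ii)}, I would first locate $F$ inside $K_{i,j}$ group-theoretically: the unique normal subgroup of order $2$ of $\Gal(K_{i,j}|\Q)\cong D_4$ is its centre, whose fixed field is $M\subseteq K_{i,j}$, so the quadratic subfields of $K_{i,j}$ are $\Q(\sqrt{l_i}),\Q(\sqrt{l_j}),F$; the two non-cyclic subgroups of order $4$ of $D_4$ contain the order-$2$ subgroups fixing $\Q(\sqrt{l_i})(\sqrt{\alpha})$ and $\Q(\sqrt{l_j})(\sqrt{\alpha'})$ and therefore fix $\Q(\sqrt{l_i})$ and $\Q(\sqrt{l_j})$ respectively, whence the cyclic subgroup of order $4$ fixes $F$ and $K_{i,j}|F$ is cyclic of degree $4$. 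Next, from $D_{F|\Q}=l_il_j$ and $D_{K_{i,j}|\Q}=l_i^4l_j^4$ the tower formula $D_{K_{i,j}|\Q}=N_{F|\Q}(\mathfrak{d}_{K_{i,j}|F})\,D_{F|\Q}^{\,4}$ forces the relative discriminant $\mathfrak{d}_{K_{i,j}|F}$ to be trivial, so $K_{i,j}|F$ is unramified at all finite primes. Finally, $F|\Q$ ramifies at exactly the two finite primes $l_i,l_j$, so by genus theory the narrow class group of $F$ has $2$-rank $1$ and hence cyclic $2$-Sylow subgroup; consequently $F$ admits at most one Galois quartic extension unramified outside $\infty$ (such an extension is automatically cyclic, a Klein four quotient being excluded by the $2$-rank, and a cyclic quotient of order $4$ of a group with cyclic $2$-Sylow subgroup is unique), which must be $K_{i,j}$ — in particular $K_{i,j}$ is independent of the choice of $\alpha$.

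\textbf{For (iii)}, concerning $\mathrm{(1)}\Leftrightarrow\mathrm{(2)}$: $F$ is totally real and $K_{i,j}|F$ is unramified at all finite primes by (ii), so $(l_i,l_j)$ is totally real $\iff$ $K_{i,j}$ is totally real $\iff$ $K_{i,j}$ lies in the Hilbert class field of $F$ $\iff$ the Hilbert class field of $F$ contains a cyclic quartic extension of $F$ (the only candidate being $K_{i,j}$, by (ii)); since $\mathrm{Cl}(F)$ has cyclic $2$-Sylow subgroup (being a quotient of the narrow class group), this last condition amounts to $4\mid h(F)$. Concerning $\mathrm{(2)}\Leftrightarrow\mathrm{(3)}$: the same computation as in (ii) (now with $D_{M|\Q}=l_i^2l_j^2$ and $[M:F]=2$) shows $M|F$ is unramified at all finite primes, and $M$ is totally real, so $M|F$ is unramified everywhere; Chevalley's ambiguous class number formula for $M|F$ then reads
\[
\#\,\mathrm{Cl}(M)^{\Gal(M|F)}=\frac{h(F)\prod_v e_v}{[M:F]\cdot\bigl(E_F:E_F\cap N_{M|F}M^\times\bigr)},
\]
all $e_v=1$ since $M|F$ is unramified everywhere, and every unit of $F$ is a local norm at every place (all places being unramified in $M|F$), so by the Hasse norm theorem the unit index is $1$ and $\#\,\mathrm{Cl}(M)^{\Gal(M|F)}=h(F)/2$; as a nontrivial finite $2$-group acted on by $\Z/2\Z$ has fixed-point subgroup of even order, $2\mid h(M)\iff 2\mid h(F)/2\iff 4\mid h(F)$. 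Concerning $\mathrm{(2)}\Leftrightarrow\mathrm{(4)}$: this is the classical determination of when $4$ divides the class number of the real quadratic field $\Q(\sqrt{l_il_j})$ in terms of the rational quartic residue symbols; here one must carefully separate the narrow from the ordinary class number — equivalently track the norm of the fundamental unit of $\Q(\sqrt{l_il_j})$ — and this is exactly what the equality $\left(\frac{l_i}{l_j}\right)_4=\left(\frac{l_j}{l_i}\right)_4$ governs, so for this step I would invoke Rédei's papers, Scholz's reciprocity law and the reference on the $2$-class group of a quadratic field provided by H.~Chapdelaine and C.~Levesque.

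The main obstacle is the equivalence $\mathrm{(2)}\Leftrightarrow\mathrm{(4)}$: everything else is a rather formal interplay of class field theory, genus theory, the dihedral structure of $\Gal(K_{i,j}|\Q)$ from \ref{discriminantprop} and the ambiguous class number formula, but relating $4\mid h(\Q(\sqrt{l_il_j}))$ to the quartic residue symbols genuinely requires the Rédei--Reichardt / Scholz theory of the $4$-rank of class groups and the behaviour of the norm of the fundamental unit, and is the part hardest to reconstruct from scratch.
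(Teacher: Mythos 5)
Your proof is correct, and in places it takes a genuinely different route from the paper's. For part (i) the paper argues directly on the Diophantine equation: writing $l_i=y^2+z^2$ with $y\equiv 0\bmod 4$ (possible since $l_i\equiv 1\bmod 8$) gives the solution $(x,y,z)=(l_i,y,z)$ of $x^2-l_iy^2-l_iz^2=0$ with $x=l_i>0$ and the required congruence, so the resulting $\alpha$ is totally positive. Your cyclotomic argument --- identifying $K_{i,i}$ as the unique quartic subfield of $\Q(\zeta_{l_i})$ via the conductor--discriminant formula and noting that $8\mid l_i-1$ places complex conjugation in the index-$4$ subgroup --- reaches the same conclusion and in fact shows slightly more (every admissible $\alpha$ works), at the cost of being less elementary than the explicit sum-of-two-squares construction. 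For (ii) and for $(1)\Leftrightarrow(2)$ of (iii) you follow essentially the paper's line (the cyclic quartic unramified-at-finite-places structure of $K_{i,j}|\Q(\sqrt{l_il_j})$, genus theory giving $2$-rank $1$ of the narrow class group, hence uniqueness, and the Hilbert class field characterization), merely writing out the subfield and discriminant computations that the paper delegates to \cite{LR}. The real divergence is $(2)\Leftrightarrow(3)$: the paper simply cites \cite{RK}, Th.\ 1 for $(2)\Leftrightarrow(3)\Leftrightarrow(4)$, whereas you prove $(2)\Leftrightarrow(3)$ from scratch via Chevalley's ambiguous class number formula applied to the everywhere-unramified quadratic extension $\Q(\sqrt{l_i},\sqrt{l_j})|\Q(\sqrt{l_il_j})$; the key steps (triviality of the unit index via the Hasse norm theorem since all places are unramified, and the fixed-point argument on the $2$-part) are all in order, so this buys a self-contained argument where the paper relies on a reference. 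For $(2)\Leftrightarrow(4)$ you, like the paper, defer to the Rédei--Reichardt/Scholz literature, which is exactly what \cite{RK}, Th.\ 1 covers, so no gap remains there either.
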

\begin{proof}
In order to show (i), note that $l_i$ can be written as $l_i=y^2+z^2$ with $y\equiv 0\mod 4$. This gives rise to a solution of the equation $x^2-l_iy^2-l_iz^2$ satisfying the properties in the proof of \ref{normproposition} with $x=l_i>0$ which shows (i).
\vspace{10pt}

For $i\neq j$, a straightforward examination of the subfields of $K_{i,j}$ shows that $K_{i,j}|\Q(\sqrt{l_i l_j})$ is a cyclic extension of degree $4$ unramified at all finite places, cf.\ \cite{LR}. By Gauss' genus theory, the $2$-rank of the narrow class group of $\Q(\sqrt{l_i l_j})$ is $1$ (e.g.\ see \cite{MKClassNumber}) showing (ii).
\vspace{10pt}

If $(l_i,l_j)$ is totally real, $K_{i,j}|\Q(\sqrt{l_i l_j})$ is an unramified extension of degree 4 implying (2). Conversely, assuming (2), the field $\Q(\sqrt{l_i l_j})$ possesses an unramified extension of degree $4$ which by (ii) must coincide with $K_{i,j}$. Hence $(1)$ holds. For the equivalences $(2)\Leftrightarrow(3)\Leftrightarrow(4)$ see \cite{RK}, Th.1.
\end{proof}

We can now state and prove the fundamental relation between the triple Massey products for the group $G_S(2)$ and Rédei symbols. We keep the system of generators $\tau_i$ and relations $r_i$ as chosen in \ref{gspres}.

\begin{theo}
 \label{gsmassey}
Let $S=\{l_0,\ldots, l_n\}$ where $l_0=2$ and $l_1,\ldots, l_n$ are prime numbers $\equiv 1\modulo 8$ satisfying $\left(\frac{l_i}{l_j}\right)_2=1,\ 1\le i,j\le n,\ i\neq j$. Then the group $G_S(2)$ has Zassenhaus invariant $\z(G_S(2))\ge 3$ and the triple Massey product 
\[
\langle\cdot, \cdot, \cdot\rangle_n: H^1(G_S(2))\times H^1(G_S(2))\times H^1(G_S(2))  \longrightarrow H^2(G_S(2))
\]
is given by
\[
(-1)^{tr_{r_m}\langle \chi_{i}, \chi_{j}, \chi_{k} \rangle_3} =
 \left\{ \begin{array}{ll}
[l_i,l_j,l_k],& \mbox{if}\ m=k, m\neq i,\\ 
\redeiinvnotilde,& \mbox{if}\ m=i, m\neq k,\\ 
1,& \mbox{otherwise}
\end{array}\right.
\]
for all $1\le m\le n$ and provided that $(l_i, l_j)$ and $(l_j, l_k)$ are totally real. Here $\chi_0,\ldots, \chi_n\in H^1(G_S(2))$ denotes the basis dual to the system of generators $\tau_0,\ldots, \tau_n$ of $G_S(2)$ chosen as in \ref{gspres} and $tr_{r_m}: H^2(G_S(2))\to \F_2$ is the trace map corresponding to the relation $r_m$.
\end{theo}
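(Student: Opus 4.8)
The plan is to translate the triple Massey product into Magnus coefficients of the relations $r_m$, and then to evaluate these coefficients by passing to dihedral quotients of $G_S(2)$, in which they become splitting conditions for the primes $l_m$ inside the fields $K_{i,j}$, i.e.\ Rédei symbols. First, by \ref{cupproductcoro} the hypotheses already give $\z(G_S(2))\ge 3$, so the presentation $1\to R\to F\to G_S(2)\to 1$ of \ref{gspres} satisfies $R\subseteq F_{(3)}$, and \ref{epsilonmaps} (applied with $n=3$) shows that $\langle\cdot,\cdot,\cdot\rangle_3$ is uniquely defined and that, for every triple $(i,j,k)$ and every $1\le m\le n$,
\[
 \tr_{r_m}\langle\chi_i,\chi_j,\chi_k\rangle_3 \;=\; (-1)^{3-1}\,\varepsilon_{(i,j,k),2}(r_m)\;=\;\varepsilon_{(i,j,k),2}(r_m)\in\F_2 .
\]
So it suffices to determine the degree-$3$ part of the Magnus expansion $\psi(r_m)$, where $r_m=x_m^{l_m-1}[x_m^{-1},y_m^{-1}]$ and $y_m$ is the chosen preimage of the Frobenius $\sigma_m$.

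Since $8\mid l_m-1$, the series $(1+X_m)^{l_m-1}-1$ lies in the ideal $(X_m^8)$, so $x_m^{l_m-1}$ contributes nothing in degrees $\le 3$. The congruence $l_m\equiv 1\bmod 8$ together with the hypothesis $\left(\frac{l_i}{l_j}\right)_2=1$ forces $a_{m,0}=\tilde a_{m,0}=a_{m,j}=0$ in the description of $y_m$ in the proof of \ref{gspres}, whence $y_m\in F_{(2)}$. A short computation with the Magnus embedding then shows that the degree-$3$ part of $\psi(r_m)$ equals $[X_m,(\psi y_m)_2]=X_m(\psi y_m)_2+(\psi y_m)_2X_m$, where $(\psi y_m)_2$ is the degree-$2$ part of $\psi(y_m)$ (independent of the choice of preimage, since relations lie in $F_{(3)}$). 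Writing $c^{(m)}_{p,q}\in\F_2$ for the coefficient of $X_pX_q$ in $(\psi y_m)_2$, we obtain
\[
 \varepsilon_{(i,j,k),2}(r_m)=\delta_{i,m}\,c^{(m)}_{j,k}+\delta_{k,m}\,c^{(m)}_{i,j},
\]
which already produces the case distinction of the theorem: the right-hand side vanishes unless $m\in\{i,k\}$, and for $m=i=k$ the two summands cancel over $\F_2$. So everything reduces to the claim that, for pairwise distinct indices $a,b,c$ with $(l_b,l_c)$ totally real, $(-1)^{c^{(a)}_{b,c}}=[l_b,l_c,l_a]$; taking $(a;\{b,c\})=(m;\{i,j\})$ when $m=k$ and $(a;\{b,c\})=(m;\{j,k\})$ when $m=i$ then gives exactly the two nontrivial cases of the statement, and the degenerate cases in which two of $i,j,k$ coincide are handled with the corresponding specializations of \ref{discriminantprop} and \ref{Redeidefi}.

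To prove the claim, note that since $(l_b,l_c)$ is totally real, \ref{discriminantprop} and the definition of totally real pairs give $K_{b,c}\subseteq\Q_S(2)$ and hence a surjection $\rho\colon G_S(2)\twoheadrightarrow\Gal(K_{b,c}|\Q)\cong D_4=\langle s,t\mid s^2=t^2=(st)^4=1\rangle$. As $K_{b,c}$ is unramified outside $\{l_b,l_c,\infty\}$, the inertia generator $\tau_p$ maps to $1$ for all $p\notin\{b,c\}$, while $\tau_b,\tau_c$ map to the two generating involutions of $D_4$; consequently $\rho\circ\pi\colon F\to D_4$ is the Magnus quotient killing $x_b^2$, $x_c^2$, the remaining generators $x_p$ and $F_{(3)}$, under which the centre $Z=\langle(st)^2\rangle$ corresponds to $[X_b,X_c]$. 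Since $y_a\in F_{(2)}$, this gives $\rho(\sigma_a)=\rho\pi(y_a)=(st)^{2c^{(a)}_{b,c}}$, so $c^{(a)}_{b,c}=1$ exactly when the Frobenius of $l_a$ in $\Gal(K_{b,c}|\Q)$ is the non-trivial central element, i.e.\ exactly when $l_a$ is not completely split in $K_{b,c}$. On the other hand, quadratic reciprocity together with $l_a\equiv l_b\equiv l_c\equiv 1\bmod 8$ and $\left(\frac{l_b}{l_a}\right)_2=\left(\frac{l_c}{l_a}\right)_2=1$ shows that $l_a$ is completely split in $\Q(\sqrt{l_b},\sqrt{l_c})=K_{b,c}^{\,Z}$, so its Frobenius automatically lies in $Z$; and tracing through the subgroup lattice of $D_4$ — with $k_1=\Q(\sqrt{l_b})=K_{b,c}^{\langle s,(st)^2\rangle}$ and $k_{12}=k_1(\sqrt\alpha)=K_{b,c}^{\langle s\rangle}$ for suitable generators $s,t$ and $\alpha$ as in \ref{normproposition} — one checks that for a prime $\mf p$ of $k_1$ above $l_a$ the symbol $\left(\frac{\alpha|k_1}{\mf p}\right)$ is $1$ precisely when the decomposition group of $l_a$ in $\Gal(K_{b,c}|\Q)$ is trivial and is $-1$ otherwise, i.e.\ $[l_b,l_c,l_a]=1$ iff $l_a$ is completely split in $K_{b,c}$. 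Comparing the two computations proves the claim, hence the theorem for indices in $\{1,\dots,n\}$; the cases with one of $i,j,k$ equal to $0$ go through verbatim with $\Q(\sqrt 2)$ in place of $\Q(\sqrt{l_b})$ and, in the doubly degenerate situation, with the convention for $[l_0,l_0,l_k]$ from \ref{Redeidefi}.

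The main obstacle is the identification in the last step: one has to verify carefully that the arithmetically given surjection $G_S(2)\twoheadrightarrow\Gal(K_{b,c}|\Q)$ really coincides with the Magnus/Heisenberg quotient — so that $\tau_b,\tau_c$ become involutions, all other $\tau_p$ become trivial, and the central $\F_2$ is detected by exactly the coefficient of $[X_b,X_c]$ in $\psi(y_a)$ — and that this central value is precisely the splitting invariant of $l_a$ in $K_{b,c}$ measured by the Rédei symbol, which requires knowing the subgroup structure of $D_4$ well enough to see which quadratic and quartic subfields are cut out by which subgroups. Everything else — the Magnus computations producing the formula for $\varepsilon_{(i,j,k),2}(r_m)$ and the bookkeeping inside $D_4$ — is routine.
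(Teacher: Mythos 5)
Your proposal takes essentially the same route as the paper's proof: reduce $\tr_{r_m}\langle \chi_i,\chi_j,\chi_k\rangle_3$ to the Magnus coefficients $\varepsilon_{(i,j),2}(y_m)$ via \ref{epsilonmaps} and the commutator formula, then evaluate these by pushing $y_m$ into $\Gal(K_{i,j}|\Q)\cong D_4$, where the nontrivial central element is detected exactly by the coefficient of $X_iX_j$ and is identified with the Rédei symbol through the splitting of $l_m$ in $K_{i,j}$. The one place where you are noticeably lighter than the paper is the degenerate cases ($k\in\{i,j\}$, $i=j$, and $(i,j)=(0,0)$): for $k=j$ the prime $l_j$ ramifies in $K_{i,j}$, so $\pi(y_j)$ is only pinned down by the class-field-theoretic normalization of $\sigma_j$ (its restriction to $\Q(\sqrt{l_j})$ is trivial, which rules out $\pi(y_j)\in\{s,\,s(st)^2\}$), and the paper spends a substantial part of the proof on exactly this point, which your ``handled by the corresponding specializations'' does not supply.
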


\begin{proof}
 First assume that $i\neq j$. We set $k_1=\Q(\sqrt{l_i}), k_2=\Q(\sqrt{l_j})$. By \ref{normproposition}, it follows that there exists a prime $\mf{p}_j$ in $k_1$ over $l_j$ which is unramified in $k_1(\sqrt{\alpha})$. We have the presentation
\[
 G(K_{i,j}|\Q)=\langle s,t\ |\ s^2=t^2=(st)^4 = 1\rangle
\]
where $s,t$ are chosen as in \ref{discriminantprop}(i). Since by \ref{discriminantprop}(ii) $l_j$ ramifies in $K_{i,j}$, it follows that there is a prime $\mf{P}_j$ in $K_{i,j}$ above $l_j$ such that the inertia group $T_{\mf{P}_j}$ is generated by $s$. It follows from symmetry that there exists a prime $\mf{P}_i$ in $K_{i,j}$ above $l_i$ such that the inertia group $T_{\mf{P}_i}$ is generated by $t$. Assuming that the pair $(l_i,l_j)$ is totally real, we have $K_{i,j}\subseteq \Q_S(2)$ and we may choose primes $\mf{l}_i,\mf{l_j}$ in $\Q_S(2)$ lying above $\mf{P}_i, \mf{P}_j$. For all $0\le k\le n,\ k\neq i,j$, we choose an arbitrary prime in $\Q_S(2)$ above $l_k$. 
\vspace{10pt}

As in \ref{gspres} we have a minimal presentation $1\to R\to F\to G_S(2)\to 1$ where $F$ is the free pro-$2$-group on $x_0,\ldots, x_n$ and $x_k$ maps to $\tau_k\in T_{\mf{l}_k}$ for all $0\le k\le n$ and hence we have a projection
\[
\begin{tikzpicture}[description/.style={fill=white,inner sep=2pt}, bij/.style={below,sloped,inner sep=1.5pt}]
\matrix (m) [matrix of math nodes, row sep=1.5em,
column sep=2.5em, text height=1.5ex, text depth=0.25ex]
{\pi: F & G_S(2) & G(K_{i,j}|\Q).\\};
\path[->>,font=\scriptsize]
(m-1-1) edge node[auto] {} (m-1-2)
(m-1-2) edge node[auto] {} (m-1-3);
\end{tikzpicture}
\]
Since by \ref{discriminantprop} $K_{l_i,l_j}$ is unramified outside $\{l_i,l_j\}$, we have $\pi(x_k)=1,\ k\neq i,j$. Furthermore, $\pi(x_i)=t$, since $\pi(x_i)$ is contained in $T_{\mf{P}_i}$ and must be non-trivial. In fact, if $1\le i\le n$, this follows immediately from the observation that $\tau_i$ is a generator of the inertia subgroup $T_{\mf{l}_i}\subseteq G_S(2)$ of $\mf{l}_i$. For $i=0$ we can argue as follows: Suppose $\pi(x_0)=1$, then the restriction of $\tau_0$ to $k_1=\Q(\sqrt{2})$ is trivial. On the other hand, the restriction of $\tau_0$ to $\Q_S(2)^{ab}$ equals $(\hat{g}_0,\Q_S(2)^{ab}|\Q)$ where $\hat{g}_0$ denotes the idèle whose $2$-component is $5$ and whose other components are $1$. Since $(\hat{g}_0,\Q(\sqrt{2})|\Q)$ is the non-trivial element in $\Gal(\Q(\sqrt{2})|\Q)$, this yields a contradiction. By symmetry, we conclude that
\[
 \pi(x_k)=\left\{ \begin{array}{ll}
t,& \mbox{if}\ k=i,\\
s,& \mbox{if}\ k=j,\\
1,& \mbox{if}\ k\neq i,j.\\
\end{array}\right.
\]
Now let $k\in \{1,\ldots, n\}$ such that $k\neq i,j$ and let $\mf{p}_k$ denote a prime ideal in $k_1$ above $l_k$ unramified in $k_1(\sqrt{\alpha})$. Choose a prime $\mf{P}_k$ in $k_1k_2=\Q(\sqrt{l_i},\sqrt{l_j})$ above $\mf{p}_k$. By our assumptions on the Legendre symbols, $l_k$ is completely decomposed in $k_1 k_2$ and hence $\mf{p}_k$ splits in $k_1(\sqrt{\alpha})$ if and only if $\mf{P}_k$ splits in $k_1k_2(\sqrt{\alpha})=K_{i,j}$, i.e.\ we have
\[
  \left(\frac{\alpha|k_1}{\mathfrak{p}_k}\right) =  \left(\frac{\alpha|k_1k_2}{\mathfrak{P}_k}\right).
\]
Noting that $G(K_{l_i,l_j}|k_1k_2)$ is generated by $(st)^2$ and since by definition the element $y_k\in F$ (chosen with respect to a prolongation $\mf{l}_k$ of $\mf{P}_k$ to $\Q_S(2)$) is mapped via $\pi$ to the Frobenius of the prime lying under $\mf{l}_k$ in $K_{i,j}$, it follows that
 \[
 \pi(y_k)=\left\{ \begin{array}{ll}
(st)^2,& \mbox{if}\ [l_i,l_j,l_k]=-1,\\
1,& \mbox{if}\ [l_i,l_j,l_k]=1.\\
\end{array}\right.
\]
By \ref{discriminantprop}(i) the kernel $\tilde{R}$ of $\pi: F\to G(K_{\tilde{l}_i,\tilde{l}_j}|\Q)$ is generated by the elements $x_i^2, x_j^2,$ $(x_j x_i)^4$ and $x_l,\ l\neq i,j$ as closed normal subgroup of $F$. A straightforward computation of the Magnus expansions (cf.\ \ref{Magnusdefi}) of these elements yields
\begin{eqnarray*}
 \psi(x_i^2) & = & 1+X_i^2,\\
 \psi(x_j^2) & = & 1+X_j^2,\\
 \psi((x_j x_i)^4) & \equiv & 1\mod \deg\ge 4\\
 \psi(x_l) & = & 1+X_l.
\end{eqnarray*}
Therefore the maps $\varepsilon_{(i),2}, \varepsilon_{(j),2}, \varepsilon_{(i,j),2}$ vanish identically on $\tilde{R}$. If $[\tilde{l}_i,\tilde{l}_j,l_k]=1$, we have $y\in\tilde{R}$ and consequently $\varepsilon_{(i,j),2}(y_k) = 0$. If $[\tilde{l}_i,\tilde{l}_j,l_k]=-1$, then $\pi(y_k)=(st)^2$, i.e.\ $y_k=(x_jx_i)^2 r$ for some $r\in \tilde{R}$. This yields
\[
 \varepsilon_{(i,j),2} (y_k) =  \varepsilon_{(i,j),2}((x_ix_j)^2) + \varepsilon_{(i,j),2} (r) + \varepsilon_{(i),2}((x_ix_j)^2) \varepsilon_{(j),2}(r) = 1
\]
where we have used the product formula \cite{DV2}, Prop.1.1.22.
\vspace{10pt}

Next we consider the case $k=j$, so in particular $j\neq 0$. By definition of the Rédei symbol, if $[l_i, l_j, l_j]=1$, we have the decomposition
\[
 l_j\mc{O}_{k_1(\sqrt{\alpha_2})} = \mf{q}_1\mf{q}_2 \mf{q}_3^2
\]
with pairwise different prime ideals $\mf{q}_i$. By choice of the prime $\mf{P}_j$ of $K_{i,j}$, we have $\mf{P}_j\mid \mf{q}_1$ or $\mf{P}_j\mid \mf{q}_2$. The Frobenius automorphism of $\mf{l}_j$ maps to the trivial element of $\Gal(k_1(\sqrt{\alpha_2})|\Q)$, i.e.\ $\pi(y_j)=1$ or $\pi(y_j)=s$. We recall that the restriction of the image $\sigma_j$ of $y_j$ in $G_S(2)$ to $\Q_S(2)^{ab}$ is given by $(\hat{l}_j, \Q_S(2)^{ab}|\Q)$ where $\hat{l}_j$ denotes the idèle whose $l_j$-component equals $l_j$ and all other components are $1$, i.e.\ by class field theory the restriction of $\sigma_j$ to $k_2=\Q(\sqrt{l_j})$ is trivial. Since $s$ maps $\sqrt{l_j}$ to $-\sqrt{l_j}$, the case $\pi(y_j)=s$ cannot occur and in particular $\varepsilon_{(i,j),2}(y_k)=0$ holds. If $[\tilde{l}_i, l_j, l_j]=-1$, then $l_j$ decomposes as
 \[
 l_j\mc{O}_{k_1(\sqrt{\alpha_2})} = \mf{q}_1\mf{q_2}^3
\]
where $\mf{P}_j\mid{q}_1$. In this case the Frobenius automorphism of $\mf{l}_j$ maps to the non-trivial automorphism of $k_1(\sqrt{\alpha_2})|k_1$ and we have $\pi(y_j)= (st)^2$ or $\pi(y_j)=s(st)^2$ where again the latter case cannot occur, since $s(st)^2$ maps $\sqrt{l_j}$ to $-\sqrt{l_j}$. As in the case $k\neq j$ we conclude that $\varepsilon_{(i,j),2}(y_j)=1$. By symmetry, $\varepsilon_{(i,j),2} = 1$ if and only if $[l_i,l_j,l_i]=-1$. 
 \vspace{10pt} 

In order to determine $\varepsilon_{(i,i),2}(y_k)$ for $\ 1\le i,k\le n,\ i\neq j$, first note that by \ref{discriminantprop} $K_{i,i}|\Q$ is a cyclic extension of degree $4$ and that by \ref{totallyrealproposition}(i) $K_{l_i,l_i}\subseteq \Q_S(2)$. More precisely, $K_{i,i}$ is unramified outside $l_i$ and totally ramified at $l_i$. We can choose a minimal presentation $1\to R\to F\to G_S(2)\to 1$, such that the induced projection $\pi: F\to G(K_{l_i,l_i}|\Q)$ maps $x_l$ to $1$ for $l\neq i$ and $x_i$ to $s$ where $s$ is a generator of  $G(K_{i,i}|\Q)$. By definition it holds that
 \[
 \pi(y_k)=\left\{ \begin{array}{ll}
s^2,& \mbox{if}\ [l_i,l_i,l_k]=-1,\\
1,& \mbox{if}\ [l_i,l_i,l_k]=1.\\
\end{array}\right.
\]
Again let $\tilde{R}$ be the kernel of $\xyalign \xymatrix@C=12pt{\pi: F\ar@{->>}[r]& G(K_{\tilde{l}_i,\tilde{l}_j}|\Q)}$, i.e.\ $\tilde{R}$ is the closed normal subgroup of $F$ generated by $x_i^4, x_l,\ k\neq i$. Since
\[
 \psi(x_i^4) = 1+X_i^4,
\]
we see that in particular $\varepsilon_{(i,i),2}, \varepsilon_{i,2}$ vanish on $\tilde{R}$. Hence if $\pi(y_k)=1$, we have $\varepsilon_{(i,i),2}(y_k)=0$. If $\pi(y_k)=s^2$, then $y_k = x_i^2 r$ for some $r\in\tilde{R}$ which implies that
\[
\varepsilon_{(i,i),2} (y_k) =  \varepsilon_{(i,i),2}(x_i^2) + \varepsilon_{(i,i),2} (r) + \varepsilon_{(i),2}(x_i^2) \varepsilon_{(i),2}(r) = 1.
\]
Since $\pi(y_i)=1$, by the same argument we obtain $\mu_2(i,i,i)=0$, showing the first part of (ii).
\vspace{10pt}

Finally we consider the case $(i,j)=(0,0)$. Let $K=\Q(\zeta_{16})^+$ denote the maximal real subfield of the cyclotomic field $\Q(\zeta_{16})$. The extension $K|\Q$ is cyclic of degree $4$ over $\Q$ and $K\subseteq \Q_S(2)$. Let $s\in G(K|\Q)$ denote a generator. By definition of the symbol $[l_0,l_0,l_k]$, we have a projection 
\[
\begin{tikzpicture}[description/.style={fill=white,inner sep=2pt}, bij/.style={below,sloped,inner sep=1.5pt}]
\matrix (m) [matrix of math nodes, row sep=1.5em,
column sep=2.5em, text height=1.5ex, text depth=0.25ex]
{\pi: F & G_S(2) & G(K|\Q)\\};
\path[->>,font=\scriptsize]
(m-1-1) edge node[auto] {} (m-1-2)
(m-1-2) edge node[auto] {} (m-1-3);
\end{tikzpicture}
\]
such that for all $1\le k\le n$
\[
\pi(y_k) =  \left\{ \begin{array}{ll}
s^2,& \mbox{if}\ [l_0,l_0,l_k]=-1,\\
1,& \mbox{if}\ [l_0,l_0,l_k]=1.\\
\end{array}\right.
\]
Hence as in the case $i=j, i\neq 0$ we conclude that $\varepsilon_{(0,0},2) (y_j) = 1$ if and only if $[l_0,l_0,l_j]=-1$.
\vspace{10pt}

Summing up, for $(i,j,k), 0\le i,j\le n, 1\le k\le n$ we have the identities
\[
\varepsilon_{(i,j),2}(y_k)=\left\{ \begin{array}{ll}
1,& \mbox{if}\ [l_i,l_j,l_k]=-1,\\
0,& \mbox{if}\ [l_i,l_j,l_k]=1.\\
\end{array}\right.
\]
Hence for $1\le m\le n$ it follows from \ref{epsilonmaps} that
\begin{eqnarray*}
 tr_{r_m} \langle \chi_i, \chi_j,\chi_k \rangle_3 & = & \varepsilon_{(i,j,k),2} (r_m) = \varepsilon_{(i,j,k),2} (x_m^{l_m-1} [x_m^{-1}, y_m^{-1}])\\ & = & 
\varepsilon_{(i,j,k),2} ([x_m^{-1}, y_m^{-1}])\\
& = & \varepsilon_{(i),2} (x_m^{-1}) \varepsilon_{(j,k),2} (y_m^{-1}) - \varepsilon_{(k),2} (x_m^{-1}) \varepsilon_{(i,j),2} (y_m^{-1})\\
& = & \left\{ \begin{array}{ll}
1,& \mbox{if}\ m=k, m\neq i, [l_i,l_j,l_k]=-1,\\
1,& \mbox{if}\ m=i, m\neq k, [l_i,l_j,l_k]=-1,\\
0,& \mbox{otherwise}
\end{array}\right.
\end{eqnarray*}
where we have used the identities for the maps $\varepsilon_{\cdot,2}$ given in \cite{DV}, Prop.1.1.22. This concludes the proof.
\end{proof}

\begin{rema}
 For triple Massey products of the form $\langle \chi_i,\chi_j,\chi_k \rangle_3$, $i,j,k\neq 0$, the above proof follows the ideas given in \cite{DV2} and \cite{MM2}. The main difficulty comes from the fact that $2\in S$ in our case which amounts to calculating Massey products for $\chi_0$. 
\end{rema}

Together with Theorem \ref{cohomologicalcrit} this enables us to prove the main result of this section which gives a large supply of mild pro-$2$-groups of the form $G_S(2)$ with Zassenhaus invariant 3:

\begin{theo}
\label{wildexample} 
 Let $S=\{l_0,l_1,\ldots, l_n\}$ for some $n\ge 1$ and prime numbers $l_0=2,\ l_i\equiv 9\mod 16,\ i=1,\ldots, n$, such that the Legendre symbols satisfy
\[
 \left(\frac{l_i}{l_j}\right)_2=1,\ 1\le i,j\le n,\ i\neq j.
\]
Then $G_S(2)$ is a mild pro-$2$-group with generator rank $n+1$, relation rank $n$ and Zassenhaus invariant $\z(G)=3$.
\end{theo}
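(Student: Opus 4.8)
The plan is to combine the explicit description of the triple Massey product in \ref{gsmassey} with the cohomological mildness criterion \ref{cohomologicalcrit}, treating the class $\chi_0$ attached to the prime $l_0=2$ as the distinguished generator.

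First I would collect the numerical data. Since $l_i\equiv 9\modulo 16$ forces $l_i\equiv 1\modulo 8$ and the Legendre symbols $\left(\frac{l_i}{l_j}\right)_2$ are trivial by hypothesis, \ref{gsrank} yields $\cd G_S(2)=2$, generator rank $h^1(G_S(2))=n+1$ and relation rank $h^2(G_S(2))=n$, while \ref{cupproductcoro} yields $\z(G_S(2))\ge 3$; in particular the triple Massey product on $H^1(G_S(2))$ is uniquely defined and $\F_2$-trilinear. Next I would feed the sharper congruence $l_i\equiv 9\modulo 16$ into \ref{gsmassey}. By \ref{Redeidefi} one has $[l_0,l_0,l_i]=-1$ for all $i=1,\dots,n$, so evaluating \ref{gsmassey} on the triple $(\chi_i,\chi_0,\chi_0)$ --- whose only non-trivial entry is the case $m=i\neq 0$, contributing $[l_0,l_0,l_i]=-1$ --- gives $tr_{r_m}\langle\chi_i,\chi_0,\chi_0\rangle_3=\delta_{mi}$ for $1\le i,m\le n$. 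Since $\{tr_{r_m}\}_{m=1}^{n}$ is a basis of $H^2(G_S(2))^{\vee}$, the classes $\langle\chi_i,\chi_0,\chi_0\rangle_3$, $i=1,\dots,n$, form the dual basis of $H^2(G_S(2))$; in particular the triple Massey product is not identically zero, so $\z(G_S(2))=3$ by \ref{zassinvdefi}. Evaluating \ref{gsmassey} on $(\chi_0,\chi_0,\chi_0)$ falls into the ``otherwise'' case for every $m$, since $0\notin\{1,\dots,n\}$; hence $tr_{r_m}\langle\chi_0,\chi_0,\chi_0\rangle_3=0$ for all $m$, i.e.\ $\langle\chi_0,\chi_0,\chi_0\rangle_3=0$.

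Finally I would invoke \ref{cohomologicalcrit} with $n=\z(G_S(2))=3$, the decomposition $H^1(G_S(2))=U\oplus V$ where $U$ is spanned by $\chi_1,\dots,\chi_n$ and $V=\F_2\chi_0$, and weight $e=1$ (so $1\le e\le n-1$). Condition (b) holds because the image of $U\otimes V^{\otimes 2}$ under $\langle\cdot,\cdot,\cdot\rangle_3$ contains the basis $\{\langle\chi_i,\chi_0,\chi_0\rangle_3\}_{i=1}^{n}$ of $H^2(G_S(2))$; condition (a) holds because a tuple with at least $n-e+1=3$ entries in the line $V$ has all three entries proportional to $\chi_0$, so its Massey product is a scalar multiple of $\langle\chi_0,\chi_0,\chi_0\rangle_3=0$. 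Thus $G_S(2)$ is mild, which together with the numerical data above gives all assertions. The genuine choice here --- and the main obstacle --- is arranging \ref{cohomologicalcrit} to apply: the ``wild'' generator $\chi_0$ must be placed in the one-dimensional summand $V$ and one must take $e=1$, which is exactly what collapses condition (a) to the single, immediately verified vanishing $\langle\chi_0,\chi_0,\chi_0\rangle_3=0$; the symmetric arrangement ($\chi_0\in U$, $e=2$) would instead force control of products such as $\langle\chi_i,\chi_i,\chi_k\rangle_3$, governed by fourth-power residue symbols, which need not vanish. A minor technical point is checking the hypotheses of \ref{gsmassey} for the triples used: $(l_0,l_0)$ is totally real by definition, and the computation for $(\chi_i,\chi_0,\chi_0)$ only invokes the case $i=j=0$ of the proof of \ref{gsmassey}, which relies on $\Q(\zeta_{16})^{+}\subseteq\Q_S(2)$ (valid since $2\in S$), so no further totally-real condition is actually needed.
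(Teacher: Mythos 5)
Your proposal is correct and follows essentially the same route as the paper: the same decomposition $U=\langle\chi_1,\ldots,\chi_n\rangle$, $V=\langle\chi_0\rangle$ with $e=1$ in \ref{cohomologicalcrit}, fed by the values $[l_0,l_0,l_i]=-1$ from \ref{gsmassey}. The only cosmetic difference is that you read $tr_{r_m}\langle\chi_i,\chi_0,\chi_0\rangle_3=\delta_{mi}$ and $\langle\chi_0,\chi_0,\chi_0\rangle_3=0$ directly off the case distinction in \ref{gsmassey}, where the paper evaluates $\langle\chi_0,\chi_0,\chi_k\rangle_3$ and invokes the shuffle identity of \cite{JGMasseyMild}, Prop.\ 4.8 — and your observation that only the $(l_0,l_0)$ totally-real case is actually needed is a correct reading of the proof of \ref{gsmassey}.
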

\begin{proof}
  We have already seen that $\z(G_S(2))\ge 3$. As in \ref{gsmassey} let $\chi_0,\ldots, \chi_n\in H^1(G_S(2))$ denote the dual basis of the system of generators $\tau_0,\ldots, \tau_n$. Since by assumption $l_1\equiv \ldots \equiv l_n\equiv 9\mod 16$, by \ref{gsmassey} we obtain
 \[
\tr_{r_m}\langle \chi_{0}, \chi_{0}, \chi_{k} \rangle_3 = \tr_{r_m}\langle \chi_{k}, \chi_{0}, \chi_{0} \rangle_3 = \delta_{mk} = 
 \left\{ \begin{array}{ll}
1,& \mbox{if}\ m=k,\\ 
0,& \mbox{if}\ m\neq k
\end{array}\right.
\]
for $m=1,\ldots, n$. Note that we have used the shuffle identity \cite{JGMasseyMild}, Prop.4.8. In particular, the triple Massey product is non-zero and therefore $\z(G_S(2))=3$. We apply \ref{cohomologicalcrit} with respect to the subspaces $U,V$ spanned by $\{\chi_1,\ldots, \chi_n\}$ and $\{\chi_0\}$ respectively and $e=1$. Noting that $\tr_{r_1},\ldots, \tr_{r_n}$ is a basis of $H^2(G_S(2))^\vee$, the above observation implies that the $\F_2$-linear map $U\otimes V\otimes V\longrightarrow H^2(G_S(2))$ is an isomorphism, i.e. condition (b) of \ref{cohomologicalcrit} is satisfied. Furthermore, again by \cite{JGMasseyMild}, Prop.4.8 we have $\langle \chi_0, \chi_0, \chi_0 \rangle_3 = 0$. Hence, condition (a) also holds and we conclude that $G_S(2)$ is mild with respect to the Zassenhaus filtration.
\end{proof}

In the above proof we only had to calculate Massey products of the form $\langle \chi_0,\chi_0,\chi_k\rangle_3$. In the following examples all pairs of primes $(l_i,l_j)$ are totally real, so we get an entire description of the triple Massey product:

\pagebreak

\begin{exam}\quad
\begin{itemize}
\item[(i)]
Let $S=\{l_0,\ldots, l_4\}$ where 
\[
l_0=2,\ l_1=313,\ l_2=457,\ l_3=521.
\]
We have $\left(\frac{313}{457}\right)_2=\left(\frac{313}{521}\right)_2=\left(\frac{457}{521}\right)_2=1$ and a calculation of solutions of the diophantine equation in the proof of \ref{normproposition} shows that all pairs $(l_i,l_j)$ are totally real. Using the computational algebra system MAGMA \cite{MA}, we find that the symbol $[l_i,l_j,l_k]$ is $-1$ for all permutations of the triples
\begin{eqnarray*}
 (i,j,k) &=& (1,1,3),\ (1,2,3),\ (1,3,3),\ (0,0,1),\ (0,0,2),\ (0,0,3), \\
&& (0,1,1),\ (0,2,2),\ (0,3,3),\ (0,2,3),\ (0,3,2)
\end{eqnarray*}
and $[l_i,l_j,l_k]=1$ in all other cases. By \ref{wildexample}, $G_S(2)$ is mild.
\item[(ii)] Let $S=\{l_0,l_1, l_2\}$ where 
\[
l_0=2,\ l_1=113,\ l_2=593.
\]
Computations using MAGMA \cite{MA} show that all pairs of primes in $S$ are totally real and that all Rédei symbols are equal to $1$. Hence, the triple Massey product of $G_S(2)$ is identically zero, i.e.\ we have $\z(G_S(2))\ge 4$.
\end{itemize}
\end{exam}

\section{Fabulous pro-$2$-groups with trivial cup-product}

Let $k$ be a number field and $S$ be a finite set of primes of $k$ disjoint from the places $S_p$ lying above $p$. Then the maximal pro-$p$-extension $k_S(p)|k$ unramified outside $S$ does not contain any $\Z_p$-extension and therefore its Galois group $G_S(p)$ has finite abelianization. More precisely, $G_S(p)$ is a fab pro-$p$-group:

\begin{defi}
 A pro-$p$-group $G$ is called \f{fab} if for every open subgroup $H\subseteq G$ the abelianization $H^{ab}=H/[H,H]$ is finite. We call $G$ \f{fabulous} if it is mild and fab.
\end{defi}

If a fab pro-$p$-group $G$ is of cohomological dimension $2$, it is a duality group of strict cohomological dimension 3. In particular, this holds for any fabulous group. The first examples of fabulous groups of the form $G_S(p)$ have been constructed by J.\ Labute \cite{JL} over $k=\Q$. The results of A.\ Schmidt et.\ al.\ provide an infinite supply of examples over arbitrary number fields. One should also remark the relevance of arithmetical fab groups with regard to the Fontaine-Mazur conjecture. 

However, from a group theoretical point of view, fabulous pro-$p$-groups are not yet well understood. To the author's knowledge, to date there is no known example of a non-analytic fab pro-$p$-group being explicitly described in terms of generators and relations. Therefore, it seems desirable to obtain examples of fabulous pro-$p$-groups with a more explicit knowledge of the relation structure, e.g.\ by a complete determination of the triple Massey product.

The groups $G_S(2)$ studied in the previous section are not fab since we had to allow wild ramification, i.e.\ $2\in S$. It turns out that one can produce fab (and even fabulous) quotients of these groups by adding further arithmetic conditions:

If $k$ is a number field and $S,T$ are disjoint finite sets of primes of $k$, we denote by $G_S^T(p)$ the Galois group of the maximal $p$-extension $k_S^T(p)$ of $k$ which is unramified outside $S$ and completely decomposed at the primes above $T$. 

For $k=\Q,\ p=2,\ \#T=1$ we obtain the following

\begin{theo}
\label{GSTpres}
Let $S=\{l_0,\ldots, l_n\}$ where $l_0=2$ and $l_1,\ldots, l_n$ are prime numbers $\equiv 1\modulo 8$ satisfying $\left(\frac{l_i}{l_j}\right)_2=1,\ 1\le i,j\le n,\ i\neq j$. Furthermore, let $T=\{q\}$ where $q\not\in S$ is a prime number $\equiv 5\mod 8$, such that the following conditions are satisfied:
\begin{itemize}
 \item[(1)] $\left(\frac{q}{l_i}\right)=1$ for all $i=1,\ldots, n-1$,
 \item[(2)] $\left(\frac{q}{l_n}\right)=-1$.
\end{itemize}
Then for the pro-$2$-group $G_S^T(2)$ the following holds: 
\begin{itemize}
 \item[\rm (i)] $G_S^T(2)$ has generator rank $h^1(G_S^T(2))=n$, relation rank $h^2(G_S^T(2)) \le n$ and Zassenhaus invariant $\z(G_S^T(2))\ge 3$.
 \item[\rm (ii)] Assume that the pair $(l_i,l_j)$ is totally real for all $0\le i,j\le n,\ i\neq j$. Then $G_S^T(2)$ possesses a presentation $G_S^T(2)=\langle \ol{x}_1,\ldots, \ol{x}_{n}\ |\ \ol{r}_1,\ldots, \ol{r}_{n}\rangle$ such that the triple Massey product
\[
\langle\cdot, \cdot, \cdot\rangle_3: H^1(G_S^T(2))\times H^1(G_S^T(2)) \times H^1(G_S^T(2)) \longrightarrow H^2(G_S^T(2))
\]
is given by
\[
(-1)^{tr_{\ol{r}_m}\langle \ol{\chi}_{i}, \ol{\chi}_{j}, \ol{\chi}_{k} \rangle_3} =
 \left\{ \begin{array}{ll}
\lbracket l_i,l_j,l_k \rbracket,& \mbox{if}\ m=k,\ m\neq i,\ i,j\neq n,\\
\lbracket l_i,l_j,l_k \rbracket\cdot\lbracket l_0,l_j,l_k\rbracket,& \mbox{if}\ m=k,\ i=n,j\neq n,\\
\lbracket l_i,l_j,l_k\rbracket\cdot\lbracket l_0,l_j,l_k\rbracket,& \mbox{if}\ m=k,\ m\neq i,\ i\neq n=j,\\
\lbracket l_i,l_j,l_k\rbracket\cdot\lbracket l_0,l_j,l_k\rbracket,& \mbox{if}\ m=k,\ i=j=n,\\
\lbracket l_j,l_k,l_i\rbracket,& \mbox{if}\ m=k,\ m\neq i, i,j\neq n,\\
\lbracket l_j,l_k,l_i\rbracket\cdot\lbracket l_0,l_j,l_k\rbracket,& \mbox{if}\ m=i,\ k=n,\ j\neq n,\\
\lbracket l_j,l_k,l_i\rbracket\cdot\lbracket l_0,l_k,l_i\rbracket,& \mbox{if}\ m=i,\ m\neq k,\ k\neq n=j,\\
\lbracket l_j,l_k,l_i\rbracket\cdot\lbracket l_0,l_0,l_i\rbracket,& \mbox{if}\ m=i,\ k=j=n,\\ 
1,& \mbox{otherwise}
\end{array}\right.
\]
for $m=1,\ldots, n-1$ and
\[
(-1)^{tr_{\ol{r}_n}\langle \ol{\chi}_{i}, \ol{\chi}_{j}, \ol{\chi}_{k} \rangle_3} =
 \left\{ \begin{array}{ll}
\lbracket l_i,l_j,l_n \rbracket,& \mbox{if}\ k=n,\ i,j\neq n,\\
\lbracket l_i,l_j,l_n \rbracket\cdot\lbracket l_i,l_0,l_n\rbracket,& \mbox{if}\ k=j=n,\ i\neq n,\\
\lbracket l_j,l_k,l_n \rbracket,& \mbox{if}\ i=n,\ k,j\neq n,\\
\lbracket l_j,l_k,l_n \rbracket\cdot\lbracket l_0,l_k,l_n\rbracket,& \mbox{if}\ i=j=n,\ k\neq n\\
1,& \mbox{otherwise}
\end{array}\right.
\]
where $\{\ol{\chi}_1,\ldots, \ol{\chi}_{n}\}$ denotes the basis of $H^1(G_S^T(2))$ dual to $\ol{x}_1,\ldots, \ol{x}_{n}$.
 \end{itemize}
Assuming in addition that the Leopoldt conjecture holds for all number fields $k$ contained in $\Q_S^T(2)$ and the prime $2$, we have:
\begin{itemize}
 \item[\rm (iii)] $G_S^T(2)$ is a fab pro-$2$-group and $h^2(G_S^T(2))=n$.
\end{itemize}
\end{theo}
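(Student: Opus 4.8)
Since $q\notin S$ is odd, it is unramified in $\Q_S(2)$; fixing a prime $\mf{q}\mid q$ of $\Q_S(2)$ and a Frobenius $\sigma_q\in G_S(2)$ at $\mf{q}$ we have $G_S^T(2)=G_S(2)/\llangle\sigma_q\rrangle$. The first step is to locate $\sigma_q$ in the minimal presentation $1\to R\to F\to G_S(2)\to 1$ of \ref{gspres}. The dual basis $\chi_0,\ldots,\chi_n$ cuts out the quadratic fields $\Q(\sqrt{2}),\Q(\sqrt{l_1}),\ldots,\Q(\sqrt{l_n})\subseteq\Q_S(2)$, so $\chi_i(\sigma_q)$ is dictated by the splitting of $q$ there: $q\equiv5\mod8$ gives $\left(\frac{2}{q}\right)=-1$, hence $\chi_0(\sigma_q)=1$, while quadratic reciprocity together with hypotheses (1) and (2) gives $\chi_i(\sigma_q)=0$ for $1\le i\le n-1$ and $\chi_n(\sigma_q)=1$. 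Thus $\sigma_q\equiv x_0x_n\mod F_{(2)}$, so for any lift $w\in F$ of $\sigma_q$ the set $\{w,x_1,\ldots,x_n\}$ is again a basis of $F$, and killing $w$ exhibits $G_S^T(2)$ as $F'/\llangle\ol{r}_1,\ldots,\ol{r}_n\rrangle$, where $F'$ is free on $\ol{x}_1,\ldots,\ol{x}_n$ (the images of $x_1,\ldots,x_n$) and $\ol{r}_m=\phi(r_m)$ for the induced homomorphism $\phi\colon F\to F'$, which sends $x_i\mapsto\ol{x}_i$ for $i\ge1$ and $x_0$ to a word $w_0\equiv\ol{x}_n^{-1}\mod F'_{(2)}$. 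Since the $r_m$ already lie in $F_{(2)}$, the Frattini quotient of $G_S^T(2)$ has dimension $n$, so $h^1(G_S^T(2))=n$ and $h^2(G_S^T(2))\le n$; and because \ref{cupproductcoro} gives $R\subseteq F_{(3)}$ under our hypotheses, functoriality of the Zassenhaus filtration yields $\ol{r}_m=\phi(r_m)\in F'_{(3)}$, hence $\z(G_S^T(2))\ge3$. This proves (i).

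For (ii) I would apply \ref{epsilonmaps} to the presentation $F'/\llangle\ol{r}_1,\ldots,\ol{r}_n\rrangle$, giving $tr_{\ol{r}_m}\langle\ol{\chi}_i,\ol{\chi}_j,\ol{\chi}_k\rangle_3=\varepsilon_{(i,j,k),2}(\ol{r}_m)$ for $1\le m\le n$, where $\ol{\chi}_1,\ldots,\ol{\chi}_n$ is dual to $\ol{x}_1,\ldots,\ol{x}_n$. The key point is that the Magnus expansion is compatible with $\phi$: $\psi(\ol{r}_m)=\Psi(\psi(r_m))$, where $\Psi$ is the continuous $\F_2$-algebra endomorphism of $\F_2\llangle X\rrangle$ with $\Psi(X_0)=X_n+(\text{degree}\ge2)$ and $\Psi(X_i)=X_i$ for $i\ge1$. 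As $R\subseteq F_{(3)}$, the series $\psi(r_m)-1$ has no terms of degree $<3$, so only its degree-$3$ part contributes to the degree-$3$ part of $\psi(\ol{r}_m)$, and only through the linear term $X_0\mapsto X_n$ of $\Psi$; consequently
\[
 \varepsilon_{(i,j,k),2}(\ol{r}_m)=\sum_{I}\varepsilon_{I,2}(r_m),
\]
summed over the multi-indices $I$ of length $3$ over $\{0,\ldots,n\}$ which become $(i,j,k)$ after replacing every $0$ by $n$ (one, two, four or eight of them according as zero, one, two or three entries of $(i,j,k)$ equal $n$). Rewriting each summand as $tr_{r_m}\langle\chi_I\rangle_3$ by \ref{epsilonmaps} for $G_S(2)$ and inserting the Rédei symbols of \ref{gsmassey} --- whose totally-real hypothesis is inherited --- yields the asserted case list: a slot equal to $n$ splits into two indices, one with that slot equal to $n$ and one with it equal to $0$, and the second contributes the factor with an index replaced by $l_0$; the separate formula for $\ol{r}_n$ reflects that for $m=n$ the ``$m=c$'' and ``$m=a$'' alternatives of \ref{gsmassey} are triggered by a genuine and by a substituted index simultaneously. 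Conceptually this is just the identity that inflation along $G_S(2)\twoheadrightarrow G_S^T(2)$ sends $\ol{\chi}_i\mapsto\chi_i$ for $i<n$ and $\ol{\chi}_n\mapsto\chi_0+\chi_n$ (as $\ol{\chi}_i$ cuts out $\Q(\sqrt{l_i})$ for $i<n$ and $\ol{\chi}_n$ cuts out $\Q(\sqrt{2l_n})$), combined with multilinearity of the triple Massey product and its compatibility with inflation.

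For (iii), assume Leopoldt at $2$ for all number fields inside $\Q_S^T(2)$. Then $\cd G_S^T(2)\le2$ (by \cite{NSW}, using Leopoldt), and the Euler--Poincaré characteristic of $G_S^T(2)$ is that of $G_S(2)$ modified by the local contribution of the split prime $q$, giving $h^0-h^1+h^2=1$; with $h^1=n$ this forces $h^2(G_S^T(2))=n$. For fab-ness, let $k$ be a number field with $\Q\subseteq k\subseteq\Q_S^T(2)$; one must show $\Gal(\Q_S^T(2)|k)^{ab}$ is finite, i.e.\ that $k$ admits no $\Z_2$-extension inside $\Q_S^T(2)$. As $\infty\notin S$, the field $k$ is totally real, so under Leopoldt its only $\Z_2$-extension unramified outside the primes above $2$ is the cyclotomic one $k_\infty=k\Q_\infty$ (the odd primes $l_i$ ramify tamely and contribute only finite inertia to abelian pro-$2$ extensions). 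Finally, no prime of $k$ above $q$ splits completely in $k_\infty|k$: the Frobenius of $q$ topologically generates $\Gal(\Q_\infty|\Q)\cong\Z_2$ because $q\equiv5\mod8$, so its positive powers --- in particular the Frobenius of any prime of $k$ above $q$ in the torsion-free group $\Gal(k_\infty|k)$ --- are nontrivial. Imposing complete decomposition at $q$ therefore destroys $k_\infty$, so $\Gal(\Q_S^T(2)|k)^{ab}$ has vanishing $\Z_2$-rank and is finite; hence $G_S^T(2)$ is fab.

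The main obstacle is part (ii): one must pin down $\sigma_q$ just precisely enough (only its class modulo $F_{(2)}$ is needed, which is exactly what makes the computation tractable) and then organize the various index-patterns and the resulting products of Rédei symbols so as to match the stated case distinction, invoking the symmetry $[l_a,l_b,l_c]=[l_{\gamma(a)},l_{\gamma(b)},l_{\gamma(c)}]$ where necessary. A secondary difficulty in (iii) is quoting the correct Leopoldt-conditional statements (finiteness of $\cd$ and the Euler characteristic formula for $G_S^T$) and checking the splitting of $q$ in the cyclotomic $\Z_2$-extensions.
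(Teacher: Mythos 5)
Your proposal follows the paper's proof in all essentials. For (i) you locate $\sigma_q\equiv\tau_0\tau_n\bmod (G_S(2))_{(2)}$ via the splitting of $q$ in the quadratic subfields and quadratic reciprocity, where the paper performs the equivalent idèlic computation $\hat{q}\equiv\hat{g}_0\hat{g}_n\bmod I_SI^2\Q^\times$; the passage to the minimal presentation on $\ol{x}_1,\ldots,\ol{x}_n$ by killing a lift of $\sigma_q$ is exactly the paper's construction, and the conclusion $\ol{R}\subseteq \ol{F}_{(3)}$ is obtained the same way. For (ii) your Magnus-expansion substitution $X_0\mapsto X_n+(\deg\ge 2)$ is precisely the algebraic dual of the paper's argument, which computes $\infl \ol{\chi}_i=\chi_i$ for $i<n$ and $\infl \ol{\chi}_n=\chi_0+\chi_n$ and then uses compatibility of Massey products with inflation, multilinearity and the shuffle identities together with Theorem \ref{gsmassey} --- you state this cohomological reformulation yourself, so the two are the same computation. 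The fab-ness argument in (iii) (the fixed field is totally real, Leopoldt forces any $\Z_2$-quotient to be cyclotomic, and $q\equiv 5\bmod 8$ is inert up the cyclotomic tower) is also the paper's.

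The one place where you deviate, and where your argument is on weaker footing, is the derivation of $h^2(G_S^T(2))=n$: you invoke an Euler--Poincar\'e characteristic identity ``$h^0-h^1+h^2=1$'' for $G_S^T(2)$, obtained by ``modifying'' $\chi_2(G_S(2))$ by a local contribution at the split prime, and you assert $\cd G_S^T(2)\le 2$. Neither statement is among the standard results for groups with decomposition conditions when $\infty\notin S$ and $T\neq\emptyset$, and the theorem does not even claim finite cohomological dimension for $G_S^T(2)$, so as written this step is a gap. It is, however, easily repaired, and the paper's own route uses only what you have already established: since $G_S^T(2)$ is fab its abelianization is finite, and a pro-$2$-group with $n$ generators and fewer than $n$ defining relations necessarily has infinite abelianization; hence $h^2(G_S^T(2))\ge h^1(G_S^T(2))=n$, which together with $h^2(G_S^T(2))\le n$ from (i) gives equality. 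Substituting this deficiency argument for your Euler-characteristic claim makes the proposal match the paper's proof completely.
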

\begin{proof}
Keeping the notation and choices of the elements $\tau_0,\ldots, \tau_n\in G_S(2)$ as in the previous sections, by \ref{gspres} the group $G_S(2)$ has a minimal presentation
\[
\begin{tikzpicture}[description/.style={fill=white,inner sep=2pt}, bij/.style={below,sloped,inner sep=1.5pt}]
\matrix (m) [matrix of math nodes, row sep=3em,
column sep=2.5em, text height=1.5ex, text depth=0.25ex]
{1 & R & F & G_S(2) & 1\\};
\path[->,font=\scriptsize]
(m-1-1) edge node[auto] {} (m-1-2)
(m-1-2) edge node[auto] {} (m-1-3)
(m-1-3) edge node[auto] {$\pi$} (m-1-4)
(m-1-4) edge node[auto] {} (m-1-5);
\end{tikzpicture}
\]
where $F$ is the free pro-$p$-group on $x_0,\ldots, x_n$, $\pi$ maps $x_i$ to $\tau_i$ and $R$ is generated by $r_i = x_i^{l_i-1}[x_i^{-1}, y_i^{-1}],\ i=1, \ldots, n$ as closed normal subgroup of $F$. We fix a prime $\mf{Q}$ of $\Q_S(2)$ lying above $q$ and denote by $G_\mf{Q}\subseteq G_S(2)$ the decomposition group of $\mf{Q}$, which is generated as closed subgroup by the Frobenius automorphism $\sigma_\mf{Q}$ of $\mf{Q}$. Furthermore, we choose an arbitrary lift $r_\mf{Q}\in \pi^{-1}(\sigma_\mf{Q})$. We obtain a commutative exact diagram
\begin{eqnarray*}
 \label{nonminimaldiagram}
\begin{tikzpicture}[description/.style={fill=white,inner sep=2pt}, bij/.style={below,sloped,inner sep=1.5pt}]
\matrix (m) [matrix of math nodes, row sep=1.5em,
column sep=2.5em, text height=1.5ex, text depth=0.25ex]
{  & &  & 1  &\\
   & &  & (G_\mf{Q}) &\\
 1 & R & F & G_S(2) & 1\\
 1 & \tilde{R} & F & G_S^T(2) & 1\\
 &  &  & 1\\      
};
\path[->,font=\scriptsize]
(m-3-1) edge node[auto] {} (m-3-2)
(m-3-2) edge node[auto] {} (m-3-3)
(m-3-3) edge node[auto] {$\pi$} (m-3-4)
(m-3-4) edge node[auto] {} (m-3-5)
(m-4-1) edge node[auto] {} (m-4-2)
(m-4-2) edge node[auto] {} (m-4-3)
(m-4-3) edge node[auto] {$\tilde{\pi}$} (m-4-4)
(m-4-4) edge node[auto] {} (m-4-5)
(m-1-4) edge node[auto] {} (m-2-4)
(m-2-4) edge node[auto] {} (m-3-4)
(m-3-4) edge node[auto] {} (m-4-4)
(m-4-4) edge node[auto] {} (m-5-4);
\path[-,font=\scriptsize]
(m-3-3) edge[double,double distance=1.5pt] node[auto] {} (m-4-3);
\path[right hook->,font=\scriptsize]
(m-3-2) edge node[auto] {} (m-4-2);
\end{tikzpicture}
\end{eqnarray*}
where $\tilde{R}$ denotes the closed normal subgroup of $F$ generated by $r_1,\ldots, r_n, r_\mf{Q}$ and $(G_\mf{Q})\subseteq G_S(2)$ is the closed normal subgroup generated by $G_\mf{Q}$. Let $\hat{q}\in I=I_\Q$ denote the idèle of $\Q$ whose $q$-th component equals $q$ and all other components are $1$. Then the restriction of $\sigma_{\mf{Q}}$ to $\Q_S[2]$, the maximal abelian subextension of exponent $2$ in $\Q_S(2)|\Q$, is given by $(\hat{q}, \Q_S[2]|\Q)$ where $(\cdot, \Q_S[2]|\Q)$ denotes the global norm residue symbol. By choice of $q$, we have the idèlic congruence
\begin{eqnarray*}
 \hat{q} && =  (1,\ldots, 1,q,1,\ldots)\\
	&& \equiv (\frac{1}{q},\ldots,\frac{1}{q},1,\frac{1}{q},\ldots)\\
	&& \equiv  \hat{g}_0 \hat{g}_n \mod I_S I^2 \Q^\times
\end{eqnarray*}
where 
\[
 I_S= \prod_{l\in S} \{1\} \times \prod_{l\not\in S} U_l
\]
and $\hat{g}_i$ are the idèles as constructed in the definition of the generators $\tau_i$ of $G_S(2)$ (cf.\ \ref{gspres}). Since by class field theory $\Gal(\Q_S[2]|\Q) = G_S(2) / (G_S(2))_{(2)} \cong I/I_S I^2 \Q^\times$, it follows that
\[
 \sigma_\mf{Q}\equiv \tau_0 \tau_n \mod (G_S(2))_{(2)},\quad r_\mf{Q}\equiv x_0 x_n\mod F_{(2)}.
\]
In particular, we see that $r_\mf{Q}\not\in F_{(2)}$ and therefore the presentation of $G_S^T(2)$ given by the bottom horizontal line in the above diagram is \it not \rm minimal. In order to obtain a minimal presentation, let $\ol{F}$ be the free pro-$p$-group on $n$ generators $\ol{x}_1,\ldots, \ol{x}_n$. Noting that $r_\mf{Q}, x_1,\ldots, x_n$ is a basis of $F$, the mapping $r_\mf{Q}\longmapsto 1,\ x_i\longmapsto \ol{x}_i,\ i=1,\ldots, n$ yields a well-defined surjective homomorphism $\psi: F\longrightarrow \ol F$. Let $s$ be the section of $\psi$ mapping $\ol{x}_i$ to $x_i,\ i=1,\ldots, n$ and set $\ol{R}=\psi(\tilde{R})$. We obtain the commutative exact diagram
\begin{eqnarray*}
\label{minimaldiagram}
 \begin{tikzpicture}[description/.style={fill=white,inner sep=2pt}, bij/.style={below,sloped,inner sep=1.5pt}]
\matrix (m) [matrix of math nodes, row sep=2.5em,
column sep=2.5em, text height=1.5ex, text depth=0.25ex]
{1 & \tilde{R} & F & G_S^T(2) & 1\\
 1 & \ol{R} & \ol{F} \\      
};
\path[->,font=\scriptsize]
(m-1-1) edge node[auto] {} (m-1-2)
(m-1-2) edge node[auto] {} (m-1-3)
(m-1-3) edge node[auto] {$\tilde{\pi}$} (m-1-4)
(m-1-4) edge node[auto] {} (m-1-5)
(m-2-1) edge node[auto] {} (m-2-2)
(m-2-2) edge node[auto] {} (m-2-3)
(m-2-3) edge[bend right=40] node[auto,swap] {$s$} (m-1-3);
\path[->>,font=\scriptsize]
(m-1-2) edge node[auto] {} (m-2-2)
(m-1-3) edge node[auto,swap] {$\psi$} (m-2-3);
\path[->>,font=\scriptsize]
(m-2-3) edge[dotted] node[auto] {} (m-1-4);
\end{tikzpicture}
\end{eqnarray*}
where the composition $\tilde{\pi}\circ s$ is surjective, since $x_0\equiv x_n \mod \tilde{R} F_{(2)}$ and therefore $G_S^T(2)$ is generated by the images of $\tau_1,\ldots, \tau_n$. Clearly,  $\ol{R}$ is generated by $\ol{r}_i=\psi(r_i),\ i=1,\ldots, n$. By the assumptions made for the primes in $S$, we have $r_i\in F_{(3)}$ and therefore also $\ol{R}\subseteq \ol{F}_{(3)}$. In particular, we obtain the \it minimal \rm presentation
\[
 G_S^T(2) = \ol{F}/\ol{R} = \langle \ol{x}_1,\ldots, \ol{x}_n\ |\  \ol{r}_1,\ldots, \ol{r}_n\rangle
\]
for $G_S^T(2)$. This yields $h^1(G_S^T(2)) = n,\ h^2(G_S^T(2))\le n$ and the cup-product $H^1(G_S^T(2))\times H^1(G_S^T(2))\stackrel{\cup}{\to} H^2(G_S^T(2))$ is trivial, i.e.\ we have proven (i).

Next we calculate the triple Massey product of $G_S^T(2)$. Let $\chi_0,\ldots, \chi_n\in H^1(G_S(2)) = H^1(F)$ and $\ol{\chi}_1,\ldots, \ol{\chi}_n$ denote the bases dual to $x_0,\ldots, x_n$ and $\ol{x}_1,\ldots, \ol{x}_n$ respectively. Since $\psi(x_0)\equiv \psi(x_n)=\ol{x}_n\mod F_{(2)}$, the inflation map $\infl: H^1(G_S^T(2))\longrightarrow H^1(G_S(2))$ is given by
\[
\ol{\chi}_i\longmapsto \chi_i,\ i=1,\ldots, n-1,\quad \ol{\chi}_n\longmapsto \chi_0+\chi_n.
\]
Furthermore, we have the surjective homomorphism
 \[
\begin{tikzpicture}[description/.style={fill=white,inner sep=2pt}, bij/.style={above,sloped,inner sep=1.5pt}, column 1/.style={anchor=base east},column 2/.style={anchor=base west}]
\matrix (m) [matrix of math nodes, row sep=0em,
column sep=2.5em, text height=1.5ex, text depth=0.25ex]
{ \inflv:\ H^2(G_S(2))^\vee & H^2(G_S^T(2))^\vee,\\
  \tr_{r_i} & \tr_{\ol{r}_i},\ i=1,\ldots, n.\\};
\path[->>,font=\scriptsize]
(m-1-1) edge node[auto] {} (m-1-2);
\path[|->,font=\scriptsize]
(m-2-1) edge node[auto] {} (m-2-2);
\end{tikzpicture}
\]
Since Massey products commute with the inflation maps, for any $1\le i,j,k,m\le n$ we have
\begin{eqnarray*}
 \tr_{\ol{r}_m} \langle \ol{\chi}_i,\ol{\chi}_j,\ol{\chi}_k\rangle_3 & = & \inflv (\tr_{r_m}) \langle \ol{\chi}_i,\ol{\chi}_j,\ol{\chi}_k\rangle_3\\
& = & \tr_{r_m} \infl \langle \ol{\chi}_i,\ol{\chi}_j,\ol{\chi}_k\rangle_3\\
& = & \tr_{r_m} \langle \infl \ol{\chi}_i,\infl \ol{\chi}_j,\infl \ol{\chi}_k\rangle_3.
\end{eqnarray*}
We can now deduce (ii) by a direct calculation using \ref{gsmassey} and the shuffle property of the triple Massey product.

Let $H\subseteq G_S^T(2)$ be an open subgroup and $k\subseteq \Q_S^T(2)$ the corresponding fixed field. Assume that $H^{ab}$ is infinite. Since it is finitely generated, it has a quotient isomorphic to $\Z_2$. Assuming that the Leopoldt conjecture holds for $k$ and $2$, this must be the cyclotomic $\Z_2$-extension, since $k$ is totally real (e.g.\ see \cite{NSW}, Th.10.3.6). However, the primes above $q$ cannot be completely decomposed in the cyclotomic $\Z_2$-extension of $k$ which yields a contradiction. Hence, $H^{ab}$ is finite showing that $G_S^T(2)$ is a fab group. In particular, $(G_S^T(2))^{ab}$ is finite. Consequently $h^2(G_S^T(2))\ge h^1(G_S^T(2))$ and hence equality holds.
\end{proof}

\begin{exam}
\label{fabexam}
 The sets $S=\{2, 17, 7489, 15809\}, T=\{5\}$ satisfy the assumptions made in \ref{GSTpres}. Choosing $\ol{x}_i,\ol{r}_i,\ol{\chi}_i$ as in \ref{GSTpres}, computations of the Rédei symbols with MAGMA show that
\begin{eqnarray*}
 \tr_{\ol{r}_1}\langle \ol{\chi}_{i}, \ol{\chi}_{j}, \ol{\chi}_{k} \rangle_3 \neq 0\ \Leftrightarrow& (i,j,k)\in& \{(1,1,3), (1,2,3), (1,3,2), (1,3,3),\\ &&\phantom{\{}  (2,3,1), (3,1,1), (3,2,1), (3,3,1)\},\\
 \tr_{\ol{r}_2}\langle \ol{\chi}_{i}, \ol{\chi}_{j}, \ol{\chi}_{k} \rangle_3 \neq 0\ \Leftrightarrow& (i,j,k)\in& \{(1,3,2),(2,1,3),(2,3,1),(3,1,2)\},\\
 \tr_{\ol{r}_3}\langle \ol{\chi}_{i}, \ol{\chi}_{j}, \ol{\chi}_{k} \rangle_3 \neq 0\ \Leftrightarrow& (i,j,k)\in& \{(1,1,3), (1,2,3), (2,1,3), (3,1,1),\\ &&\phantom{\{} (3,1,2), (3,2,1)\}.
\end{eqnarray*}
Hence, expressing the defining relations $\ol{r}_1,\ol{r}_2, \ol{r}_3\in R$ in the minimal presentation $1\to R\to F\to G_S^T(2)\to 1$ in terms of \f{basic commutators} of degree $3$ using \cite{DV2}, Prop.1.3.3, we obtain
\begin{eqnarray*}
\ol{r}_1 & \equiv & \ocom{1}{3}{1}\ \ocom{1}{3}{3}\ \ocom{2}{3}{1}\ \mod F_{(4)}\\
\ol{r}_2 & \equiv & \ocom{1}{3}{2}\ \mod F_{(4)},\\
\ol{r}_3 & \equiv & \ocom{1}{3}{1}\ \ocom{1}{3}{2}\ \ocom{2}{3}{1}\ \mod F_{(4)}.
\end{eqnarray*}
We claim that $G_S^T(2)$ is mild. To this end let $U=\langle \chi_1 \rangle,\ V=\langle \chi_2,\chi_3 \rangle$. By the above calculations the triple Massey product $\langle \cdot, \cdot, \cdot \rangle_3$ is trivial on $V\times V\times V$ and maps $U\times V\times V$ surjectively onto $H^2(G_S^T(2))$. Hence, the mildness of $G_S^T(2)$ follows by \ref{cohomologicalcrit}. Assuming the Leopoldt conjecture, $G_S^T(2)$ is a fabulous pro-$2$-group. 
\end{exam}

To the author's knowledge, this yields the first known example of a fabulous pro-$p$-group with trivial cup-product and also the first example of a fabulous pro-$p$-group with generator rank $\le 3$. J.\ Labute, C.\ Maire and J.\ Miná\v{c} have announced analogous results for odd $p$.
\bibliographystyle{plain}
\bibliography{promotion}

\vspace{20pt}
\address{\noindent Mathematisches Institut\\ Universität Heidelberg\\ Im Neuenheimer Feld 288\\ 69120 Heidelberg\\ Germany\\ \phantom{e-mail} \\e-mail: gaertner@mathi.uni-heidelberg.de}

\end{document}